\providecommand{\abs}[1]{\left\lvert#1\right\rvert}
\providecommand{\norm}[1]{\lVert#1\rVert}
\providecommand{\dnorm}[1]{\left\lVert#1\right\rVert}
\renewcommand{\vec}[1]{\boldsymbol{#1}}
\DeclareMathOperator{\id}{id}
\newtheorem{thrm}{Theorem}[section]
\newtheorem{crllr}[thrm]{Corollary}
\newtheorem{lmm}[thrm]{Lemma}
\newtheorem{prpstn}[thrm]{Proposition}
\newtheorem{rmrk}[thrm]{Remark}
\newenvironment{proof}{\textsc{\noindent Proof. \ignorespaces}}{\nopagebreak\hspace*{\fill}$\square$}
\title{Convergence analysis of adaptive DIIS algorithms with application to electronic ground state calculations}
\author{Maxime Chupin\thanks{CEREMADE, UMR CNRS 7534, Universit\'e Paris-Dauphine, Universit\'e PSL, Place du Mar\'echal de Lattre de Tassigny, 75775 Paris cedex 16, France (\url{chupin@ceremade.dauphine.fr}).}, Mi-Song Dupuy\thanks{Laboratoire Jacques-Louis Lions,UMR CNRS 7598, Sorbonne Université, boîte courrier 187, 75252 Paris Cedex 05, France (\url{dupuymi@ljll.math.upmc.fr})}, Guillaume Legendre\thanks{CEREMADE, UMR CNRS 7534, Universit\'e Paris-Dauphine, Universit\'e PSL, Place du Mar\'echal de Lattre de Tassigny, 75775 Paris cedex 16, France (\url{guillaume.legendre@ceremade.dauphine.fr}).}\ and \'Eric S\'er\'e\thanks{CEREMADE, UMR CNRS 7534, Universit\'e Paris-Dauphine, Universit\'e PSL, Place du Mar\'echal de Lattre de Tassigny, 75775 Paris cedex 16, France (\url{sere@ceremade.dauphine.fr}).}}
\date{\today}
\apptocmd{\sloppy}{\hbadness 10000\relax}{}{}
\begin{document}
\maketitle

\begin{abstract}
This paper deals with a general class of algorithms for the solution of fixed-point problems that we refer to as \emph{Anderson--Pulay acceleration}. This family includes the DIIS technique and its variant sometimes called commutator-DIIS, both introduced by Pulay in the 1980s to accelerate the convergence of self-consistent field procedures in quantum chemistry, as well as the related Anderson acceleration which dates back to the 1960s, and the wealth of techniques they have inspired. Such methods aim at accelerating the convergence of any fixed-point iteration method by combining several iterates in order to generate the next one at each step. This extrapolation process is characterised by its \emph{depth}, \textit{i.e.} the number of previous iterates stored, which is a crucial parameter for the efficiency of the method. It is generally fixed to an empirical value. 

In the present work, we consider two parameter-driven mechanisms to let the depth vary along the iterations. In the first one, the depth grows until a certain nondegeneracy condition is no longer satisfied; then the stored iterates (save for the last one) are discarded and the method ``restarts''. In the second one, we adapt the depth continuously by eliminating at each step some of the oldest, less relevant, iterates. In an abstract and general setting, we prove under natural assumptions the local convergence and acceleration of these two adaptive Anderson--Pulay methods, and we show that one can theoretically achieve a superlinear convergence rate with each of them. We then investigate their behaviour in quantum chemistry calculations. These numerical experiments show that both adaptive variants exhibit a faster convergence than a standard fixed-depth scheme, and require on average less computational effort per iteration. This study is complemented by a review of known facts on the DIIS, in particular its link with the Anderson acceleration and some multisecant-type quasi-Newton methods.
\end{abstract}

\section{Introduction}
The \emph{Direct Inversion in the Iterative Subspace} (DIIS) technique, introduced by Pulay \cite{Pulay:1980} and also known as \emph{Pulay mixing}, is a locally convergent method widely used in computational quantum chemistry for accelerating self-consistent field convergence. As a complement to available globally convergent methods, like the \emph{optimal damping algorithm} (ODA) \cite{Cances:2000} or its energy-DIIS (EDIIS) variant \cite{Kudin:2002}, it remains a method of choice, in a large part due to its simplicity and nearly unparalleled performance once a convergence region has been attained. Due to this success, variants of the technique have been proposed over the years in other types of application, like the GDIIS adaptation \cite{Csaszar:1984,Eckert:1997} for geometry optimization or the \textit{residual minimisation method--direct inversion in the iterative subspace} (RMM-DIIS) for the simultaneous computation of eigenvalues and corresponding eigenvectors, attributed to Bendt and Zunger and described in \cite{Wood:1985}. It has also been combined with other schemes to improve the rate of convergence of various types of iterative calculations (see \cite{Kawata:1998} for instance).

\smallskip

From a general point of view, the DIIS can be seen as an acceleration technique based on extrapolation, applicable to any fixed-point iterative scheme for which a measure of the error at each step, in the form of a residual for instance, is (numerically) available. It was recently established that this technique is closely related to an older process known as the \emph{Anderson acceleration} \cite{Anderson:1965}. It was also shown that it amounts to a multisecant-type variant of a Broyden method \cite{Broyden:1965} and that, when applied to linear problems, it is (essentially) equivalent to the \emph{generalised minimal residual} (GMRES) method of Saad and Schultz \cite{Saad:1986}. On this basis, Rohwedder and Schneider \cite{Rohwedder:2011} (for the DIIS), and later Toth and Kelley \cite{Toth:2015} (for the Anderson acceleration), analysed the method in an abstract framework and provided convergence results.

\smallskip

In the present paper, we consider a unified family of methods, which we refer to as \emph{Anderson--Pulay acceleration}, encompassing the DIIS, its commutator-DIIS (CDIIS) variant \cite{Pulay:1982}, and the Anderson acceleration. In such methods, one keeps a ``history'' of previous iterates which are combined to generate the next one at each step with the aim of accelerating the convergence of the sequence of iterates. This extrapolation process is characterised by an integer, sometimes called the \emph{depth} (see \cite{Toth:2015,An:2017,Evans:2020}), which is the number of previous iterates stored and is an important parameter for the efficiency of the method. In most applications, the depth grows up to an empirically fixed value $m$, then remains constant. One of the main conclusions of our work is that it can be beneficial to let the depth vary adaptively along the iterations.

In order to prove this point, we propose and investigate two parameter-driven procedures to determine the depth at each step of the Anderson--Pulay acceleration method. The first one allows the method to ``restart'', based on a condition initially introduced by Gay and Schnabel for a quasi-Newton method using multiple secant equations \cite{Gay:1977} and used by Rohwedder and Schneider in the context of the DIIS \cite{Rohwedder:2011}. In the second one, we adapt the depth continuously, thanks to a very simple criterion which is new, as far as we know.

In a general framework, we mathematically analyse these two adaptive Anderson--Pulay methods, and prove local convergence and acceleration properties. In contrast with preceding works \cite{Rohwedder:2011,Toth:2015}, our results are obtained \emph{without} any assumption on the boundedness of the extrapolation coefficients (which would have to be verified \textit{a posteriori} in practice). Indeed, the built-in mechanism in each of the proposed algorithms prevents linear dependency from occurring in the least-squares problem for the coefficients, allowing us to derive a theoretical \textit{a priori} bound on these coefficients. Applications of both methods to self-consistent field calculations in quantum chemistry and comparisons with their fixed-depth counterparts, demonstrate their good performances, and even suggest that adapting continuously the depth gives the fastest convergence in practice. 

\smallskip

The paper is organized as follows. The principle of the Anderson--Pulay acceleration is presented in Section \ref{presentation} through an overview of the DIIS and its relation with the Anderson acceleration and a class of quasi-Newton methods based on multisecant updating. We also recall convergence results existing in the literature for this class of methods, in both linear and nonlinear cases. In Section \ref{CDIIS variants}, a generic abstract problem is set and two variants of the Anderson--Pulay acceleration with adaptive depth are proposed to solve it. For both variants, a local convergence result is proved as well as an acceleration property, and we show that one can theoretically achieve a superlinear convergence rate. In Section \ref{electronic ground state calculation}, the quantum chemistry problems we consider for the application of the methods are recalled and their mathematical properties are discussed in connection with our abstract framework. Finally, numerical experiments, performed on molecules in order to illustrate the convergence behaviour of both methods and to compare them with their ``classical'' fixed-depth counterpart, are reported in Section~\ref{numerical experiments}.

\section{Overview of the DIIS}\label{presentation}
The class of methods named Anderson--Pulay acceleration in the present paper comprises several methods, introduced in various applied contexts with the same goal of accelerating the convergence of fixed-point iterations by means of extrapolation, the most famous of these in the quantum chemistry community probably being the DIIS technique, introduced by Pulay in 1980 \cite{Pulay:1980}. We first describe its basic principle in an abstract context and explore its relation to similar extrapolation methods and to a family of multisecant-type quasi-Newton methods. We conclude this section by recalling a number of previously established results pertaining to the DIIS and the Anderson acceleration.

\subsection{Presentation}\label{DIIS presentation}
Consider the numerical computation of the fixed-point $x_*$ of a nonlinear function $g$ from $\mathbb{R}^n$ to $\mathbb{R}^n$ by the fixed-point iterative scheme
\begin{equation}\label{basic fixed-point iteration}
\forall k\in\mathbb{N},\ x^{(k+1)}=g(x^{(k)}),
\end{equation}
for which one can compute at each step an \textit{``error''} vector $r^{(k)}$ of the form
\[
r^{(k)}=f(x^{(k)}),
\]
with $f$ a function from $\mathbb{R}^n$ to $\mathbb{R}^p$ (the integer $p$ being possibly different from $n$) satisfying
\[
f(x_*)=0.
\]
Given a non-negative integer $m$ (the maximal depth), the DIIS paradigm assumes that a good approximation to the solution $x_*$ can be obtained at step $k+1$ by forming a combination involving the $m_k+1$ previous guess values, with $m_k=\min\,\{m,k\}$, that is
\begin{equation}\label{DIIS iteration}
\forall k\in\mathbb{N},\ x^{(k+1)}=\sum_{i=0}^{m_k}c^{(k)}_i\,g(x^{(k-m_k+i)}),
\end{equation}
while requiring that the coefficients $c^{(k)}_0,\dots,c^{(k)}_{m_k}$ of the combination be such that the norm of the associated linearised error vector, given by $\sum_{i=0}^{m_k}c^{(k)}_i\,r^{(k-m_k+i)}$, is minimal in the least-squares sense under the constraint that
\begin{equation}\label{DIIS constraint}
\sum_{i=0}^{m_k}c^{(k)}_i=1.
\end{equation}

As discussed by Pulay, this minimisation 
may be achieved through a Lagrange multiplier technique applied to the normal equations associated with the problem. More precisely, one can introduce an undetermined scalar $\lambda$ and define the Lagrangian function
\[
L(c_0,\cdots,c_{m_k},\lambda)=\frac{1}{2}\,{\dnorm{\sum_{i=0}^{m_k}c_i\,r^{(k-m_k+i)}}_2}^2-\lambda\,\left(\sum_{i=0}^{m_k}c_i-1\right)=\frac{1}{2}\,\sum_{i=0}^{m_k}\sum_{j=0}^{m_k}c_i c_jb^{(k)}_{ij}-\lambda\,\left(\sum_{i=0}^{m_k}c_i-1\right),
\]
in which the coefficients $b^{(k)}_{ij}$, $i,j=0,\dots,m_k$, are those of the Gramian matrix associated with the set of error vectors stored at the end of step $k$, \textit{i.e.} $b^{(k)}_{ij}=\left<r^{(k-m_k+j)},r^{(k-m_k+i)}\right>_2$. One then finds the saddle point $(c^{(k)}_0,\dots,c^{(k)}_{m_k},\lambda^{(k)})$ of this Lagrangian by solving the associated Euler-Lagrange equations. 
This amounts to solving the following system of $m_k+2$ linear equations
\begin{equation}\label{lagdiis}
\begin{pmatrix}
b^{(k)}_{00}&\dots&b^{(k)}_{0m_k}&-1\\
\vdots&&\vdots&\vdots\\
b^{(k)}_{m_k0}&\dots&b^{(k)}_{m_km_k}&-1\\
-1&\dots&-1&0
\end{pmatrix}
\begin{pmatrix}
c^{(k)}_0\\\vdots\\c^{(k)}_{m_k}\\\lambda^{(k)}
\end{pmatrix}
=
\begin{pmatrix}
0\\\vdots\\0\\-1
\end{pmatrix}.
\end{equation}

The DIIS iterations are generally ended once an acceptable accuracy has been reached, for instance when the value of the norm of the current error vector lies below a prescribed tolerance. This procedure is summarised in Algorithm \ref{alg:diis}.

\smallskip

\begin{algorithm}[H]
\KwData{$x^{(0)}$, $tol$, $m$}
$r^{(0)}=f(x^{(0)})$\\
$k=0$\\
$m_k=0$\\
\While{$\dnorm{r^{(k)}}_2>\text{tol}$}{
    solve the constrained least-squares problem for the coefficients $\{c^{(k)}_i\}_{i=0,\dots,m_k}$ using linear system \eqref{lagdiis}\\
    $x^{(k+1)}=\sum_{i=0}^{m_k}c^{(k)}_ig(x^{(k-m_k+i)})$\\
    $r^{(k+1)}=f(x^{(k+1)})$\\
    $m_{k+1}=\min(m_k+1,m)$\\
    $k=k+1$
}
\caption{Pulay's DIIS \cite{Pulay:1980} for the acceleration of a fixed-point method based on a function $g$.}\label{alg:diis}
\end{algorithm}

\smallskip

In practice and for a large number of applications, the integer $m$ is chosen small, of the order of a few units. Since system \eqref{lagdiis} is related to normal equations, it may nevertheless happen that it is ill-conditioned if some error vectors are (almost) linearly dependent. In such a situation, a possible cure is to drop the oldest stored vectors one by one, 
until the condition number of the resulting system becomes acceptable. One should note that the use of an unconstrained equivalent formulation of the least-squares problem, like those derived in the next subsection, is usually advocated, as it is generally observed that it results in a better condition number for the matrix of the underlying linear system. We refer the reader to \cite{Shepard:2007,Walker:2011} for more details on this topic.

In \cite{Pulay:1980}, Pulay initially suggested using the quantities
\begin{equation}\label{choice of error vectors, Pulay '80}
\forall k\in\mathbb{N},\ r^{(k)}=x^{(k+1)}-x^{(k)}=g(x^{(k)})-x^{(k)},
\end{equation}
as  error vectors. Note that with this choice, $p=n$ and there is a direct relation between 
the error function $f$ and the fixed-point function $g\,:$
\begin{equation}\label{assumed relation between f and g}
f=g-\id.
\end{equation}

Pulay later proposed in \cite{Pulay:1982} a variant form of the procedure, sometimes known as the commutator-DIIS or simply CDIIS (see \cite{Garza:2012} for instance), which is more appropriate for self-consistent field (SCF) iterative schemes in quantum chemistry (see Section \ref{electronic ground state calculation}). In such applications, the fixed-point function takes its values in a submanifold $\Sigma$ (consisting of idempotent matrices), but an extrapolation like \eqref{DIIS iteration} would not lie on $\Sigma$. As a remedy, the definition of the new iterate is modified in CDIIS, so that it belongs to the submanifold. 
Moreover, in this variant, \emph{no} relation of the form \eqref{assumed relation between f and g}  is assumed between the fixed-point and error functions. For applications in quantum chemistry, Pulay introduced an error function that turns out to be the commutator of two matrices, hence the name given to the method.

\subsection{Relation with the Anderson and nonlinear Krylov accelerations} 

\noindent
To solve a nonlinear equation of the form
\begin{equation}\label{nonlinear equation}
h(x)=0,
\end{equation}
a classical idea is to perform a fixed-point iteration based on the function
\begin{equation}\label{fixed-point iteration function}
g(x)=x+\beta\,h(x),
\end{equation}
where $\beta$ is a sufficiently regular, homogeneous operator\footnote{This operator can be seen as a preconditioner of some sort, the simplest case being the multiplication by a (nonzero) constant~$\beta$ -- a relaxation (or damping) parameter -- or, in the context of the fixed-point iteration \eqref{basic fixed-point iteration}, a sequence $(\beta^{(k)})_{k\in\mathbb{N}}$ of such constants (see \cite{Anderson:1965}). Hereafter, we restrict ourselves to the case where $\beta$ is a fixed operator, so that the function $f$ remains unchanged along the iteration.}. One sees that the choice \eqref{choice of error vectors, Pulay '80} for the error vectors corresponds to setting
\[
f(x)=\beta\,h(x).
\]
In this particular case, the error vectors are (possibly preconditioned) \emph{residuals}, and it is possible to relate the DIIS to a structurally similar extrapolation method, the Anderson acceleration \cite{Anderson:1965} (sometimes called the \emph{Anderson mixing}), introduced for the numerical solution of discretised nonlinear integral equations and originally formulated as follows. At the $(k+1)$th step, given the $m_k+1$ most recent iterates $x^{(k-m_k)},\dots,x^{(k)}$ and the corresponding residuals $r^{(k-m_k)},\dots,r^{(k)}$, define
\begin{equation}\label{original AA iteration}
\forall k\in\mathbb{N},\ x^{(k+1)}=g(x^{(k)})+\sum_{i=1}^{m_k}\theta^{(k)}_i\,\left(g(x^{(k-m_k-1+i)})-g(x^{(k)})\right),
\end{equation}
the scalars $\theta^{(k)}_i$, $i=1,\dots,m_k$ being chosen so as to minimise the norm of the associated linearised residual, \textit{i.e.}
\begin{equation}\label{original AA minimization}
\forall k\in\mathbb{N},\ \vec{\theta}^{(k)}=\underset{(\theta_1,\dots,\theta_{m_k})\in\mathbb{R}^{m_k}}{\arg\min}\,\left\|r^{(k)}+\sum_{i=1}^{m_k}\theta_i\,\left(r^{(k-m_k-1+i)}-r^{(k)}\right)\right\|_2.
\end{equation}
Setting
\[
\forall k\in\mathbb{N},\ c^{(k)}_i=\theta^{(k)}_{i+1},\ i=0,\dots,m_k-1,\text{ and }c^{(k)}_{m_k}=1-\sum_{j=1}^{m_k}\theta^{(k)}_j,
\]
one observes that relation \eqref{original AA iteration} can be put into the form of relation \eqref{DIIS iteration}, so that the DIIS applied to the solution of \eqref{nonlinear equation} by fixed-point iteration is actually a reformulation of the Anderson acceleration.

This instance of the DIIS is not the only reinvention of the Anderson acceleration. In 1997, Washio and Oosterlee \cite{Washio:1997} introduced a closely related process dubbed \emph{Krylov subspace acceleration} for the solution of nonlinear partial differential equation problems. The extrapolation within it relies on a nonlinear extension of the GMRES method (see subsection \ref{equivalence with GMRES in the linear case}) and is in all points identical to \eqref{original AA iteration} and \eqref{original AA minimization}, save for the definition of the fixed-point function $g$ which corresponds in their setting to the application of a cycle of a nonlinear multigrid scheme. Another method, also based on a nonlinear generalisation of the GMRES method, is the so-called \emph{nonlinear Krylov acceleration} (NKA), introduced by Carlson and Miller around 1990 (see \cite{Carlson:1998}) and used for accelerating modified Newton iterations in a finite element solver with moving mesh. It leads to yet another equivalent formulation of the Anderson acceleration procedure by setting
\begin{equation}\label{NKA iteration}
\forall k\in\mathbb{N},\ x^{(k+1)}=g(x^{(k)})-\sum_{i=1}^{m_k}\alpha^{(k)}_i\,\left(g(x^{(k-m_k+i)})-g(x^{(k-m_k+i-1)})\right),
\end{equation}
with
\begin{equation}\label{NKA minimization}
\forall k\in\mathbb{N},\ \vec{\alpha}^{(k)}=\underset{(\alpha_1,\dots,\alpha_{m_k})\in\mathbb{R}^{m_k}}{\arg\min}\,\left\|r^{(k)}-\sum_{i=1}^{m_k}\alpha_i\,\left(r^{(k-m_k+i)}-r^{(k-m_k+i-1)}\right)\right\|_2,
\end{equation}
which amounts to take $\alpha^{(k)}_i=\sum_{j=1}^{i}\theta^{(k)}_j$,\ $i=1,\dots,m_k$. Another form of the minimisation problem for the norm of the linearised residual is found in \cite{DeSterck:2012}, where it is combined with a line-search to yield a so-called \emph{nonlinear GMRES} (N-GMRES) optimisation algorithm for computing the sum of $R$ rank-one tensors that has minimal distance to a given tensor in the Frobenius norm.

\medskip

The fact that the above acceleration schemes are different forms of the same method has been recognised on several occasions in the literature, see \cite{Fattebert:2010,Walker:2011,Calef:2013} for instance or Anderson's own account in \cite{Anderson:2019}. In \cite{Brezinski:2018}, it is shown that the Anderson acceleration is actually part of a general framework for the so-called Shanks transformations, used for accelerating the convergence of sequences.

Like the DIIS, the Anderson acceleration has been used to improve the convergence of numerical schemes in a number of contexts, and the recent years have seen a wealth of publications dealing with a large range of applications (see \cite{Lott:2012,Ganine:2013,Willert:2014,Higham:2016,An:2017,Pavlov:2018,ZhangYao,Henderson:2019} for instance). Variants with restart conditions to prevent stagnation \cite{Fang:2009}, periodic restarts \cite{Pratapa:2015} or a periodic use of the extrapolation \cite{Banerjee:2016} have also been proposed.

\subsection{Interpretation as a multisecant-type quasi-Newton method}
An insight on the behaviour of the DIIS may be gained by seeing it as a quasi-Newton method using multiple secant equations. It was indeed observed by Eyert \cite{Eyert:1996} that the Anderson acceleration procedure amounts to a modification of Broyden's \emph{second\footnote{It is sometimes also called the \emph{bad} Broyden method (see \cite{Griewank:2012}).}} method \cite{Broyden:1965}, in which a given number of secant equations are satisfied at each step. This relation was further clarified by Fang and Saad \cite{Fang:2009} as follows (see also the paper by Walker and Ni \cite{Walker:2011}).

Keeping with the previously introduced notations\footnote{We continue to assume that the relaxation parameter $\beta$ does not vary from one iteration to the next.} and considering vectors as column matrices, we introduce the matrices respectively containing the differences of successive iterates and the differences of associated successive error vectors stored at step $k$,
\[
\mathscr{Y}^{(k)}=\left[x^{(k-m_k+1)}-x^{(k-m_k)},\dots,x^{(k)}-x^{(k-1)}\right]\text{ and }
\mathscr{S}^{(k)}=\left[r^{(k-m_k+1)}-r^{(k-m_k)},\dots,r^{(k)}-r^{(k-1)}\right],
\]
in order to rewrite the recursive relation \eqref{NKA iteration} as
\[
\forall k\in\mathbb{N},\ x^{(k+1)}=x^{(k)}+r^{(k)}-\left(\mathscr{Y}^{(k)}+\mathscr{S}^{(k)}\right)\,\vec{\alpha}^{(k)},
\]
the minimization problem \eqref{NKA minimization} becoming a simple least-squares problem,
\[
\forall k\in\mathbb{N},\ \vec{\alpha}^{(k)}=\underset{\vec{\alpha}\in\mathcal{M}_{m_k,1}(\mathbb{R})}{\arg\min}\,\dnorm{r^{(k)}-\mathscr{S}^{(k)}\vec{\alpha}}_2.
\]
Assuming that $\mathscr{S}^{(k)}$ is a full-rank matrix, which means that the error vectors are affinely independent, and characterising $\vec{\alpha}^{(k)}$ as the solution of the associated normal equations, with closed form
\begin{equation}\label{type-I AA normal equations}
\vec{\alpha}^{(k)}=\left({\mathscr{S}^{(k)}}^{\top}\mathscr{S}^{(k)}\right)^{-1}{\mathscr{S}^{(k)}}^{\top}r^{(k)},
\end{equation}
one obtains the relation
\[
\forall k\in\mathbb{N},\ x^{(k+1)}=x^{(k)}-\left(-I_n+\left(\mathscr{Y}^{(k)}+\mathscr{S}^{(k)}\right)\left({\mathscr{S}^{(k)}}^{\top}\mathscr{S}^{(k)}\right)^{-1}{\mathscr{S}^{(k)}}^{\top}\right)r^{(k)},
\]
which can be identified with the update formula of a quasi-Newton method of multisecant type,
\begin{equation}\label{quasi-Newton iteration}
\forall k\in\mathbb{N},\ x^{(k+1)}=x^{(k)}-G^{(k)}r^{(k)},
\end{equation}
with
\begin{equation}\label{formula for G}
G^{(k)}=-I_n+\left(\mathscr{Y}^{(k)}+\mathscr{S}^{(k)}\right)\left({\mathscr{S}^{(k)}}^{\top}\mathscr{S}^{(k)}\right)^{-1}{\mathscr{S}^{(k)}}^{\top}.
\end{equation}
Here, the matrix $G^{(k)}$ is regarded as an approximate inverse of the Jacobian of the function $f$ at point $x^{(k)}$, satisfying the inverse multiple secant condition
\[
G^{(k)}\mathscr{S}^{(k)}=\mathscr{Y}^{(k)}.
\]
It moreover minimises the Frobenius norm $\norm{G^{(k)}+I_n}_2$ among all the matrices satisfying this condition. Thus, formula \eqref{formula for G} can be viewed as a rank-$m_k$ update of $-I_n$ generalising Broyden's second method, which effectively links the Anderson acceleration to a particular class of quasi-Newton methods. Apparently not aware of this connection, Rohwedder and Schneider \cite{Rohwedder:2011} derived a similar conclusion in their analysis of the DIIS considering a full history of iterates, showing that it corresponds to a projected variant of Broyden's second method proposed by Gay and Schnabel in 1977 (see Algorithm II' in \cite{Gay:1977}).

\smallskip

In contrast, the generalisation of Broyden's first method aims at directly approximating the Jacobian of the function $f$ at point $x^{(k)}$ by a matrix $B^{(k)}$ which minimises the norm $\norm{B^{(k)}+I_n}_2$ subject to the multiple secant condition $B^{(k)}\mathscr{Y}^{(k)}=\mathscr{S}^{(k)}$. Under the assumption that the matrix $\mathscr{Y}^{(k)}$ is full-rank, this yields 
\[
B^{(k)}=-\,I_n+\left(\mathscr{Y}^{(k)}+\mathscr{S}^{(k)}\right)\left({\mathscr{Y}^{(k)}}^{\top}\mathscr{Y}^{(k)}\right)^{-1}{\mathscr{Y}^{(k)}}^{\top}.
\]
Using this fact, Fang and Saad \cite{Fang:2009} defined a ``type-I'' Anderson acceleration (the ``type-II'' Anderson acceleration corresponding to the original one, related to Broyden's second method as seen above). Indeed, assuming that the matrix ${\mathscr{Y}^{(k)}}^{\top}\mathscr{S}^{(k)}$ is invertible and applying the Sherman--Morrison--Woodbury formula, it follows that
\[
(B^{(k)})^{-1}=-I_n+\left(\mathscr{Y}^{(k)}+\mathscr{S}^{(k)}\right)\left({\mathscr{Y}^{(k)}}^{\top}\mathscr{S}^{(k)}\right)^{-1}{\mathscr{Y}^{(k)}}^{\top},
\]
and substituting $(B^{(k)})^{-1}$ to $G^{(k)}$ in \eqref{quasi-Newton iteration}, one is led to a variant of the original method, the coefficients of which satisfy
\[
\vec{\tilde\alpha}^{(k)}=\left({\mathscr{Y}^{(k)}}^{\top}\mathscr{S}^{(k)}\right)^{-1}{\mathscr{Y}^{(k)}}^{\top}r^{(k)},
\]
instead of \eqref{type-I AA normal equations}.

\smallskip

To end this subsection, let us mention that a relation between the DIIS applied to ground state calculations and quasi-Newton methods was also suspected, albeit in heuristic ways, in articles originating from the computational chemistry community (see \cite{Thogersen:2005,Kudin:2007} for instance).

\subsection{Equivalence with the GMRES method for linear problems}\label{equivalence with GMRES in the linear case}
Consider the application of the DIIS to the solution of a system of $n$ linear equations in $n$ unknowns, with solution $x_*$, by assuming that the function $h$ in \eqref{nonlinear equation} is of the form $h(x)=b-Ax$, where $A$ is a nonsingular matrix of order $n$ and $b$ a vector of $\mathbb{R}^n$. In such a case, relation \eqref{fixed-point iteration function} amounts to $g(x)=(I_n-\beta\,A)\,x+\beta\,b$, and the fixed-point iteration method \eqref{basic fixed-point iteration} reduces to the stationary Richardson method. In what follows, we assume that all the previous error vectors are kept at each iteration, that is $m=+\infty$, so that $m_k=k$ for each natural integer $k$.

A well-known iterative method for the solution of the above linear system is the GMRES method \cite{Saad:1986}, in which the approximate solution $x^{(k+1)}$ at the $(k+1)$th step is the unique vector minimising the Euclidean norm of the residual
\[
r_{\mathrm{GMRES}}^{(k+1)}=b-A\,x^{(k+1)}
\]
in the affine space
\[
\mathcal{W}_{k+1}=\left\{v=x^{(0)}+z\,|\,z\in\mathcal{K}_{k+1}(A,r_{\mathrm{GMRES}}^{(0)})\right\},
\]
where, for any integer $j$ greater than or equal to $1$, $\mathcal{K}_j(A,r_{\mathrm{GMRES}}^{(0)})=\operatorname{span}\left\{r_{\mathrm{GMRES}}^{(0)},Ar_{\mathrm{GMRES}}^{(0)},\dots,A^{j-1}r_{\mathrm{GMRES}}^{(0)}\right\}$ is the \emph{order-$j$ Krylov (linear) subspace} generated by the matrix $A$ and the residual vector $r_{\mathrm{GMRES}}^{(0)}=b-Ax^{(0)}$, the starting vector $x^{(0)}$ being given. Since each of these Krylov subspaces is contained in the following one, the norm of the residual decreases monotonically with the iterations and the exact solution is obtained in at most $n$ iterations (the idea being that an already good approximation to the exact solution is reached after a number of iterations much smaller than $n$). More precisely, the following result holds (for a proof, see for instance \cite{Potra:2013}).

\begin{prpstn}
The GMRES method converges in exactly $\nu(A,r_{\mathrm{GMRES}}^{(0)})$ steps, where the integer $\nu(A,r_{\mathrm{GMRES}}^{(0)})$ is the grade\footnote{The grade of a non-zero vector $x$ with respect to a matrix $A$ is the smallest integer $\ell$ for which there is a non-zero polynomial $p$ of degree $\ell$ such that $p(A)x=0$.} of $r_{\mathrm{GMRES}}^{(0)}$ with respect to $A$.
\end{prpstn}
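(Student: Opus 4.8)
The plan is to characterise exactly when the minimal residual over the affine Krylov space $\mathcal{W}_j$ vanishes, and then to show that this first happens precisely at the index equal to the grade. First I would record the elementary but crucial identity $x_*-x^{(0)}=A^{-1}r_{\mathrm{GMRES}}^{(0)}$, which follows at once from $r_{\mathrm{GMRES}}^{(0)}=b-Ax^{(0)}$ and $Ax_*=b$. Writing a generic element of $\mathcal{W}_j$ as $x^{(0)}+z$ with $z\in\mathcal{K}_j(A,r_{\mathrm{GMRES}}^{(0)})$, its residual equals $r_{\mathrm{GMRES}}^{(0)}-Az$; since $A$ is nonsingular, the affine map $z\mapsto r_{\mathrm{GMRES}}^{(0)}-Az$ has injective linear part, so the least-squares minimiser of the Euclidean norm over $\mathcal{K}_j$ is unique, and the minimal residual is zero if and only if the vector $A^{-1}r_{\mathrm{GMRES}}^{(0)}$ solving $Az=r_{\mathrm{GMRES}}^{(0)}$ already lies in $\mathcal{K}_j$.

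Thus the number of steps to convergence is the smallest index $j$ for which $A^{-1}r_{\mathrm{GMRES}}^{(0)}\in\mathcal{K}_j(A,r_{\mathrm{GMRES}}^{(0)})$, and it remains to identify this index with the grade $\nu:=\nu(A,r_{\mathrm{GMRES}}^{(0)})$. I would argue by two inclusions. On one hand, if $A^{-1}r_{\mathrm{GMRES}}^{(0)}\in\mathcal{K}_j$, then $A^{-1}r_{\mathrm{GMRES}}^{(0)}=q(A)r_{\mathrm{GMRES}}^{(0)}$ for some polynomial $q$ of degree at most $j-1$; multiplying by $A$ gives $(A\,q(A)-I)\,r_{\mathrm{GMRES}}^{(0)}=0$, so the polynomial $p(t)=t\,q(t)-1$ annihilates $r_{\mathrm{GMRES}}^{(0)}$, has degree at most $j$, and is nonzero (its constant term is $-1$); hence $\nu\le j$.

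For the converse inclusion, which is the main point and the place where the nonsingularity of $A$ is genuinely used, I would take a nonzero polynomial $p$ of minimal degree $\nu$ with $p(A)\,r_{\mathrm{GMRES}}^{(0)}=0$ and show that its constant term cannot vanish: if it did, one could factor $p(t)=t\,\tilde{p}(t)$ with $\deg\tilde{p}=\nu-1$, whence $A\,\tilde{p}(A)\,r_{\mathrm{GMRES}}^{(0)}=0$ and, $A$ being invertible, $\tilde{p}(A)\,r_{\mathrm{GMRES}}^{(0)}=0$, contradicting the minimality of $\nu$. Writing $p(t)=\sum_{i=0}^{\nu}a_i\,t^i$ with $a_0\neq0$ and isolating the lowest-order term then yields $A^{-1}r_{\mathrm{GMRES}}^{(0)}=-a_0^{-1}\sum_{i=1}^{\nu}a_i\,A^{i-1}r_{\mathrm{GMRES}}^{(0)}$, which lies in $\mathcal{K}_\nu$; hence the smallest admissible index is at most $\nu$. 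Combining the two inclusions shows that this smallest index is exactly $\nu$.

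Putting the pieces together, the minimal residual over $\mathcal{W}_j$ is strictly positive for $j<\nu$ and vanishes for $j=\nu$, at which stage the unique minimiser coincides with $x_*$; this is precisely the assertion that the method converges in exactly $\nu$ steps. I expect the only delicate point to be the claim that the minimal annihilating polynomial has nonzero constant term — equivalently, that $A^{-1}r_{\mathrm{GMRES}}^{(0)}$ belongs to $\mathcal{K}_\nu$ — everything else reducing to the definition of the Krylov subspaces and routine linear algebra.
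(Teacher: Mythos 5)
Your argument is correct and complete: the reduction of ``zero residual at step $j$'' to the membership $A^{-1}r_{\mathrm{GMRES}}^{(0)}\in\mathcal{K}_j(A,r_{\mathrm{GMRES}}^{(0)})$, and the identification of the smallest such $j$ with the grade via the nonvanishing of the constant term of the minimal annihilating polynomial (which is exactly where the nonsingularity of $A$ enters), is the standard textbook proof of this fact. Note that the paper itself does not prove this proposition --- it is stated as a known result with a pointer to the literature --- so there is no in-paper argument to compare against; your write-up would serve as a self-contained substitute for that citation.
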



In \cite{Haelterman:2010}, it was shown that the GMRES method could be interpreted as a quasi-Newton method. Similarly, the fact that the DIIS (or the Anderson acceleration) used to solve a linear system is equivalent to the GMRES method in the following sense was proved in \cite{Walker:2011} and \cite{Rohwedder:2011} (see also \cite{Potra:2013} for more refined results).

\begin{thrm}\label{GMRES=DIIS}
Suppose that $x_{\mathrm{DIIS}}^{(0)}=x_{\mathrm{GMRES}}^{(0)}=x^{(0)}$ and that  the sequence of residual norms does not stagnate\footnote{As shown in \cite{Greenbaum:1996}, any nonincreasing sequence of residual norms can be produced by the GMRES method. It is then possible for the residual norm to stagnate at some point, say at the $k$th step, $k$ being a nonzero natural integer, with $r_{\mathrm{GMRES}}^{(k)}=r_{\mathrm{GMRES}}^{(k-1)}\neq0$. In such a case, the equivalence between the methods implies that $x_{\mathrm{DIIS}}^{(k+1)}=x_{\mathrm{DIIS}}^{(k)}$, making the least-square problem associated with the minimisation ill-posed due to the family $\{f(x_{\mathrm{DIIS}}^{(0)}),\dots,f(x_{\mathrm{DIIS}}^{(k+1)})\}$ not having full rank. The DIIS method then breaks down upon stagnation before the solution has been found, whereas the GMRES method does not. As pointed out in \cite{Walker:2011}, this results in the conditioning of the least-square problem being of utmost importance in the numerical implementation of the method.} before step $k$, \it{i.e.} $r_{\mathrm{GMRES}}^{(k-1)}\neq0$ and $\norm{r_{\mathrm{GMRES}}^{(j-1)}}_2>\norm{r_{\mathrm{GMRES}}^{(j)}}_2$ for each integer $j$ such that $0<j<k$. Then
\begin{equation*}
\ x_{\mathrm{GMRES}}^{(k)}=\sum_{i=0}^{k}c^{(k)}_ix_{\mathrm{DIIS}}^{(i)},
\end{equation*}
and
\begin{equation*}
\ x_{\mathrm{DIIS}}^{(k+1)}=g(x_{\mathrm{GMRES}}^{(k)}).
\end{equation*}
\end{thrm}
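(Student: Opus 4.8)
The plan is to recast the DIIS extrapolation as a genuine residual minimisation over an affine subspace, to identify that subspace with the GMRES search space $\mathcal{W}_k$ by induction, and then to read off the two claimed identities. Throughout I take $\beta$ to be a nonzero scalar (the stationary Richardson setting), so that $r_{\mathrm{DIIS}}^{(i)}=\beta\,(b-A\,x_{\mathrm{DIIS}}^{(i)})$ and the DIIS and GMRES residual-norm minimisations differ only by the constant factor $\beta$, and $\mathcal{K}_k(A,r_{\mathrm{DIIS}}^{(0)})=\mathcal{K}_k(A,r_{\mathrm{GMRES}}^{(0)})$.

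First I would exploit the affine constraint \eqref{DIIS constraint}. Writing $\hat{x}^{(k)}=\sum_{i=0}^{k}c_i^{(k)}\,x_{\mathrm{DIIS}}^{(i)}$ and using $\sum_i c_i^{(k)}=1$ together with the affinity of $x\mapsto b-Ax$, the linearised error collapses to $\sum_{i=0}^{k}c_i^{(k)}\,r_{\mathrm{DIIS}}^{(i)}=\beta\,(b-A\,\hat{x}^{(k)})$. Hence the DIIS coefficients are exactly those making $\hat{x}^{(k)}$ the minimiser of the true residual norm over the affine hull $\operatorname{aff}\{x_{\mathrm{DIIS}}^{(0)},\dots,x_{\mathrm{DIIS}}^{(k)}\}$. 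Applying the same two ingredients to the update rule \eqref{DIIS iteration}, with $g$ affine from \eqref{fixed-point iteration function}, gives $x_{\mathrm{DIIS}}^{(k+1)}=\sum_i c_i^{(k)}\,g(x_{\mathrm{DIIS}}^{(i)})=g(\hat{x}^{(k)})$, so the next iterate is exactly one Richardson step away from the current minimiser.

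The core is then an induction on $j$, for $0\le j\le k$, establishing jointly (a${}_j$) $\operatorname{aff}\{x_{\mathrm{DIIS}}^{(0)},\dots,x_{\mathrm{DIIS}}^{(j)}\}=\mathcal{W}_j$ and (b${}_j$) $\hat{x}^{(j)}=x_{\mathrm{GMRES}}^{(j)}$; the two identities of the theorem are then (b${}_k$) and its image under $g$ via the collapse above. The base case $j=0$ is immediate (both sides of (a${}_0$) reduce to $\{x^{(0)}\}$, and $r_{\mathrm{GMRES}}^{(0)}\neq0$ by nonstagnation). For the step $j-1\to j$, I use (b${}_{j-1}$) and the collapse to write $x_{\mathrm{DIIS}}^{(j)}=g(x_{\mathrm{GMRES}}^{(j-1)})$, decompose $x_{\mathrm{GMRES}}^{(j-1)}=x^{(0)}+w$ with $w\in\mathcal{K}_{j-1}$, and compute $x_{\mathrm{DIIS}}^{(j)}-x^{(0)}=(I_n-\beta A)\,w+\beta\,r_{\mathrm{GMRES}}^{(0)}$. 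Since $w\in\mathcal{K}_{j-1}$, $Aw\in A\mathcal{K}_{j-1}\subseteq\mathcal{K}_j$, and $r_{\mathrm{GMRES}}^{(0)}\in\mathcal{K}_1$, each summand lies in $\mathcal{K}_j$, which yields the inclusion $\operatorname{aff}\{x_{\mathrm{DIIS}}^{(0)},\dots,x_{\mathrm{DIIS}}^{(j)}\}\subseteq\mathcal{W}_j$.

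The delicate point is the reverse inclusion, and this is where the nonstagnation hypothesis is essential. I would show that $x_{\mathrm{DIIS}}^{(j)}-x^{(0)}$ supplies a genuinely new direction by examining its component along $A^{j-1}r_{\mathrm{GMRES}}^{(0)}$: for $j\ge2$ this component equals $-\beta$ times the leading ($A^{j-2}r_{\mathrm{GMRES}}^{(0)}$) coefficient of $w$, which is nonzero precisely because the strict decrease of the residual norms at step $j-1$ (guaranteed by the hypothesis for $j-1<k$) forces $x_{\mathrm{GMRES}}^{(j-1)}\notin\mathcal{W}_{j-2}$; for $j=1$ the contribution is the $\beta\,r_{\mathrm{GMRES}}^{(0)}$ term, nonzero since $r_{\mathrm{GMRES}}^{(0)}\neq0$. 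Thus $x_{\mathrm{DIIS}}^{(j)}-x^{(0)}\notin\mathcal{K}_{j-1}$, the affine hull gains one dimension, and (a${}_j$) follows from $\dim\mathcal{W}_j=j$. With the two search spaces equal and the least-squares problem well-posed (nonstagnation makes the stored residuals affinely independent, as the footnote records, so $A$ is injective on the relevant Krylov directions and the residual minimiser over $\mathcal{W}_j$ is unique), $\hat{x}^{(j)}$ and $x_{\mathrm{GMRES}}^{(j)}$ minimise the same objective over the same affine space and therefore coincide, giving (b${}_j$); applying $g$ closes the loop. I expect the dimension-count step — extracting the missing $A^{j-1}r_{\mathrm{GMRES}}^{(0)}$ direction from $-\beta Aw$ and tying its nonvanishing to nonstagnation at step $j-1$ — to be the main obstacle, the remaining manipulations being routine linear algebra.
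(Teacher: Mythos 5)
The paper does not prove this theorem itself: it is quoted from the literature, with the proof attributed to \cite{Walker:2011} and \cite{Rohwedder:2011}. Your argument is correct and is essentially the standard one from those references: using the constraint $\sum_i c_i^{(k)}=1$ and the affinity of $f$ and $g$ to collapse the DIIS step to a residual minimisation over the affine hull of the iterates, then identifying that hull with $\mathcal{W}_j$ by induction, the reverse inclusion resting on the fact that stagnation at step $j-1$ is exactly the vanishing of the leading Krylov coefficient of $x_{\mathrm{GMRES}}^{(j-1)}-x^{(0)}$. The only caveat worth recording is your (appropriate) restriction to a nonzero \emph{scalar} $\beta$: for a general operator $\beta$ the DIIS minimises the residual of the preconditioned system $\beta A x=\beta b$ in the Krylov spaces of $\beta A$, and the equivalence is then with GMRES applied to that system rather than to $Ax=b$ as written.
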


\medskip

Assuming that a full history of iterates is kept and that no stagnation occurs, this equivalence directly provides convergence results in the linear case for the DIIS in view of the well-known theory for the GMRES method. In particular, if $g$ is a contraction, then stagnation is forbidden and the DIIS converges to the exact solution in a finite number of steps. Theorem \ref{GMRES=DIIS} also justifies the inclusion of Krylov's name in some of the rediscoveries of the Anderson acceleration we previously mentioned. Of course, if the number of stored iterates is fixed or if ``restarts'' are allowed over the course of the computation, in the spirit of the periodically restarted GMRES method, GMRES($m$), these results are no longer valid. Nevertheless, the behaviour of GMRES($m$) has been studied in a number of particular cases, and various restart or truncation strategies (see \cite{DeSturler:1999} and the references therein for instance) have been proposed over the years in order to compensate for the loss of information by selectively retaining some of it from earlier iterations.

Walker and Ni \cite{Walker:2011} showed in a similar way that the type-I Anderson acceleration, recalled in the preceding subsection, is essentially equivalent to the full orthogonalisation method (FOM) based on the Arnoldi process (see section 2 of \cite{Saad:1986}) in the linear case.

\subsection{Existing convergence theories in the nonlinear case}
In light of the previous subsection, it appears that the convergence theory for the DIIS in the linear case is well developed and intimately linked to that of the GMRES method. On the contrary, this theory is far from being established when the method is applied to nonlinear problems. The interpretation of the DIIS as a particular instance of quasi-Newton methods does not provide an answer, as there is no general convergence theory for these methods. Nevertheless, results have emerged in the literature in the recent years.

\smallskip

In \cite{Rohwedder:2011}, Rohwedder and Schneider proved a local q-convergence result\footnote{The notions of q-linear convergence and r-linear convergence are both recalled in Subsection \ref{linear convergence}.} for the DIIS applied to a general nonlinear problem of the form \eqref{nonlinear equation}, with the choice $\beta=-1$ in \eqref{fixed-point iteration function}. They assumed that the underlying mapping is locally contractive and that error vectors associated with the stored iterates satisfy an affine independence condition. Their analysis is based on the equivalence between the DIIS and a projected variant of Broyden's second method, the convergence properties and an improvement of estimates appearing in the proof of convergence by Gay and Schnabel in \cite{Gay:1977}. By interpreting the DIIS as an inexact Newton method, they also obtained a refined estimate for the error vector at a given step that allowed them to discuss the possibility of obtaining a superlinear convergence rate for the method.

\smallskip

More recently, Kelley and Toth \cite{Toth:2015} studied the use of the Anderson acceleration on a fixed-point iteration in $\mathbb{R}^n$, also based on the function \eqref{fixed-point iteration function} in which one has set $\beta=1$, the history of error vectors being of fixed depth $m$. Assuming that the fixed-point mapping is Lipschitz continuously differentiable and is a contraction in a neighbourhood of the solution, and with the requirement that the extrapolation coefficients remain uniformly bounded in the $\ell^1$-norm, they proved that the method converges locally r-linearly. When $m=1$, they showed that the sequence of residuals converges q-linearly if the fixed-point mapping is sufficiently contractive and the initial guess is close enough to the solution. This analysis was later extended to the case where the evaluation of the fixed point map is corrupted with errors in \cite{Toth:2017}. Chen and Kelley \cite{Chen:2019} also obtained global and local convergence results for a variant of the method in which a non-negativity constraint on the coefficients is imposed (an idea previously used in the EDIIS method \cite{Kudin:2002}), generalising the results in \cite{Toth:2015}. Recently, Evans \textit{et al.} \cite{Evans:2020} studied the convergence improvement given by the Anderson method.

\smallskip

Note, however, that in all these works, for $m>1$ an assumption is made on the boundedness of the extrapolation coefficients, or on the affine independence of the error vectors. Such an assumption can only be checked \textit{a posteriori} in the standard DIIS or Anderson algorithm.
To address this issue, a stabilised version of the type-I Anderson acceleration is introduced by Zhang \textit{et al.} in \cite{Zhang:2020}. It includes a regularisation ensuring the non-singularity of the approximated Jacobian, a restart strategy guaranteeing a certain linear independence of the differences of successive stored iterates, and a safeguard mechanism guaranteeing a decrease of the residual norm. A global convergence result for the corresponding acceleration applied to a Krasnoselskii--Mann iteration in $\mathbb{R}^n$ is then proved.

\section{Restarted and adaptive-depth Anderson--Pulay acceleration}\label{CDIIS variants}
Two peculiarities of the CDIIS variant proposed by Pulay for electronic ground state calculations \cite{Pulay:1982} are that, when interpreted in terms of an abstract fixed-point function $g$ and an abstract error function $f$, the target set of the function $g$ is possibly a submanifold and the functions $g$ and $f$ do not necessarily satisfy a relation of the form \eqref{assumed relation between f and g} (see Section \ref{electronic ground state calculation} for more details). As a consequence, the convergence theories found in \cite{Rohwedder:2011} or in \cite{Toth:2015} do not apply.

This observation leads us to introduce an abstract framework in which the class of Anderson--Pulay acceleration algorithms is defined. This class encompasses and generalises the DIIS, the Anderson acceleration, the CDIIS and other methods reviewed in the previous section (see Algorithm~\ref{alg:diisori}). In addition, two adaptive modifications of this procedure are proposed: the first one, Algorithm~\ref{alg:cdiis-restart-variant}, includes a restart condition in the spirit of Gay and Schnabel \cite{Gay:1977}; in the second one, Algorithm~\ref{alg:cdiis-adaptive-depth-variant}, the depth is continuously adapted at each step according to a new criterion.

\subsection{Description of the algorithms in an abstract framework} 
\medskip

We consider an error function $f$, a fixed-point function $g$, and a point $x_*$ in $\mathbb{R}^n$ such that $f(x_*)=0$ and $g(x_*)=x_*$.
We do \emph{not} impose that $f$ and $g$ be linked by a relation of the form \eqref{assumed relation between f and g}. In Subsection \ref{linear convergence}, two assumptions will be made on these functions in a neighborhood of $x_*$, ensuring that the norm $\Vert f(x^{(k)})\Vert_2$ of the error diminishes along the basic fixed-point iteration~\eqref{basic fixed-point iteration}, and that this norm controls the distance $\Vert x^{(k)} - x_*\Vert_2$.\medskip

Our precise definition of Anderson--Pulay acceleration is given below in Algorithm~\ref{alg:diisori}. \smallskip

\begin{algorithm}[H]
\KwData{$x^{(0)}$, $tol$, $m$}
$r^{(0)}=f(x^{(0)})$\\
$k=0$\\
$m_k=0$\\
\While{$\dnorm{r^{(k)}}_2>\text{tol}$}{
   solve the constrained least-squares problem for the coefficients $\{c^{(k)}_i\}_{i=0,\dots,m_k}=\underset{\substack{(c_0,\dots,c_{m_k})\in\mathbb{R}^{m_k+1}\\\sum_{i=0}^{m_k}c_i=1}}{\arg\min}\,\dnorm{\sum_{i=0}^{m_k}c_i\,r^{(k-m_k+i)}}_2$\\
   \mbox{ }\\
   $x^{(k+1)}=\sum_{i=0}^{m_k}c^{(k)}_ig(x^{(k-m_k+i)})$ (\textbf{version A})\\
   \textbf{or}\\
   $x^{(k+1)}=g(\widetilde{x}^{(k+1)})\text{ with }\widetilde{x}^{(k+1)}=\sum_{i=0}^{m_k}c^{(k)}_ix^{(k-m_k+i)}$ (\textbf{version P})\\
   $r^{(k+1)}=f(x^{(k+1)})$\\
   $m_{k+1}=\min(m_k+1,m)$\\
   $k=k+1$
}
\caption{Fixed-depth Anderson--Pulay acceleration for a fixed-point method based on a function~$g$.}\label{alg:diisori}
\end{algorithm}

\smallskip

One can observe that the Anderson--Pulay acceleration comes in two versions, which only differ in the definition of the new iterate. Comparing with Algorithm~\ref{alg:diis}, one sees that version A corresponds to the DIIS \cite{Pulay:1980}, and is equivalent to the Anderson acceleration \cite{Anderson:1965} if relation \eqref{assumed relation between f and g} holds. The formula giving the new iterate in version A can be seen as a linearised form of the one in version P, the latter being directly inspired by the CDIIS \cite{Pulay:1982}. One may thus note that the two versions coincide for a linear fixed-point function~$g$.

\smallskip

As already discussed in Subsection \ref{DIIS presentation}, the numerical solution of the constrained linear least-squares problem for the extrapolation coefficients requires some care. Indeed, the matrix of linear system \eqref{lagdiis} resulting from the use of a Lagrange multiplier accounting for the constraint has to be well-conditioned for the method to be applicable in floating-point arithmetic. Unconstrained formulations of the problem exist, as employed in the Anderson acceleration (see \eqref{original AA minimization}) or its numerous reinventions (see \eqref{NKA minimization} for instance), each leading to an equivalent Anderson--Pulay acceleration algorithm, but with a possibly different condition number for the matrix of the linear system associated with the least-squares problem.

Note that a bound on the extrapolation coefficients is also needed when investigating the convergence of the method. In \cite{Toth:2015}, this bound is presented as a requirement, following, for instance, from the uniform well-conditioning of the least-squares problem, which cannot be \textit{a priori} guaranteed. Nevertheless, such a condition can be enforced \textit{a posteriori} by diminishing the value of the integer $m_k$. In practice, this may be achieved by monitoring the condition number of the least-squares coefficient matrix and by dropping as many of its left-most columns as needed to have this condition number below a prescribed threshold, as proposed in \cite{Walker:2011}. Another possibility consists in ensuring that the error vectors associated with the stored iterates fulfill a kind of linear independence condition like the one found in \cite{Gay:1977,Rohwedder:2011}. To enforce this condition in practice, one can simply choose to ``restart'' the method, by resetting the value of $m_k$ to zero and discarding the iterates previously stored, as soon as an ``almost'' linear dependence is detected, as done in \cite{Gay:1977,Zhang:2020}. This approach is adopted for the first adaptive modification of Algorithm~\ref{alg:diisori} proposed and analysed in the present work.

More precisely, assuming that $m_k\geq1$ at step $k$, set 
\[
s^{(k-m_k+i)}=r^{(k-m_k+i)}-r^{(k-m_k)},\ i=1,\dots,m_k,
\]
and let $\Pi_k$ denote the orthogonal projector onto $\text{span}\{s^{(k-m_k+1)},\dots,s^{(k)}\}$. The algorithm will ``restart'' at step $k+1$ (meaning that the number $m_{k+1}$ will be set to zero) if the norm of $s^{(k+1)}-\Pi_ks^{(k+1)}$ is ``small''  compared to the norm of $s^{(k+1)}$, that is, if the following inequality holds:
\begin{equation}\label{restart condition, abstract problem}
\tau\,\norm{r^{(k+1)}-r^{(k-m_k)}}_2>\norm{(\id-\Pi_k)(r^{(k+1)}-r^{(k-m_k)})}_2
\end{equation}
where $\tau$ is a real parameter chosen between $0$ and $1$. Otherwise, the integer $m_k$ is incremented by one unit with the iteration number $k$. One can view condition \eqref{restart condition, abstract problem} as a near-linear dependence criterion for the set of differences of stored error vectors. Indeed, as explained in \cite{Gay:1977}, it \emph{``recognizes that, in general, the projection of $s^{(i)}$ orthogonal to the subspace spanned by $s^{(1)},\dots,s^{(i-1)}$ must be the zero vector for some $i\leq n$''}.

Such a restart condition is particularly adapted to a specific unconstrained formulation of the linear least-squares problem for the extrapolation coefficients
\begin{equation}\label{DIIS minimisation problem}
\vec{c}^{(k)}=\underset{\substack{(c_0,\dots,c_{m_k})\in\mathbb{R}^{m_k+1}\\\sum_{i=0}^{m_k}c_i=1}}{\arg\min}\,\dnorm{\sum_{i=0}^{m_k}c_i\,r^{(k-m_k+i)}}_2,
\end{equation}
to be effectively employed in practice. In this formulation, instead of using the constrained vector of coefficients $\vec{c}=(c_0,\dots,c_{m_k})$, one works with the unconstrained vector $\vec{\gamma}=(\gamma_1,\cdots,\gamma_{m_k})$ with $\gamma_i=c_i$, $1\leq i \leq m_k$. One then has
\[
\sum_{i=0}^{m_k}c_i\,r^{(k-m_k+i)}=r^{(k-m_k)}+\sum_{i=1}^{m_k}\gamma_i\,s^{(k-m_k+i)},
\]
resulting in \eqref{DIIS minimisation problem} being replaced by
\[
\vec{\gamma}^{(k)}=\underset{(\gamma_1,\dots,\gamma_{m_k})\in\mathbb{R}^{m_k}}{\arg\min}\,\dnorm{r^{(k-m_k)}+\sum_{i=1}^{m_k}\gamma_i\,s^{(k-m_k+i)}}_2.
\]

The corresponding  modification of the Anderson--Pulay acceleration is given in Algorithm \ref{alg:cdiis-restart-variant}. The question of the numerical computation of the orthogonal projector $\Pi_k$ will be addressed in Subsection \ref{implementation}.

Note that, with this instance of the method, all the stored iterates, except for the last one, are discarded when a restart occurs. In some practical computations, a temporary slowdown of the convergence is observed after the restart (see Section \ref{numerical experiments}). To try to remedy such an inconvenience, we introduce a smoother adaptive modification of Algorithm~\ref{alg:diisori}, based on an update of the set of stored iterates at step $k+1$. The idea is to eliminate the old iterates that are too far from the fixed point, compared to the most recent one. More precisely, one chooses the largest possible depth $m_{k+1}\leq m_k +1$ such that the following inequality is satisfied for all $k+1-m_{k+1}\leq i\leq k$:
\[
\delta\norm{r^{(i)}}_2<\norm{r^{(k+1)}}_2.
\]
Here, the real number $\delta$ is a small positive parameter. To the best of our knowledge, this criterion is new, and the corresponding variant of the Anderson--Pulay acceleration is given below in Algorithm \ref{alg:cdiis-adaptive-depth-variant}.

In this last algorithm, one may observe that the unconstrained form of the least-squares problem for the coefficients uses the differences of error vectors associated with \emph{successive} iterates, that is, $s^{(i)}=r^{(i)}-r^{(i-1)}$, $i=k-m_k+1,\dots,k$, which is a computationally convenient choice when storing a set of iterates whose depth is adapted at each step. One then has $\alpha_i^{(k)}=\sum_{j=0}^{i-1}c_j^{(k)}$, $i=1,\dots,m_k$.\newpage

\begin{algorithm}[H]
\KwData{$x^{(0)}$, $tol$, $\tau$}
$r^{(0)}=f(x^{(0)})$\\
$k=0$\\
$m_k=0$\\
\While{$\dnorm{r^{(k)}}_2>\text{tol}$}{
    \eIf{$m_k=0$}{
        $x^{(k+1)}=g(x^{(k)})$\\
        }{
        solve the unconstrained least-squares problem for the coefficients $\{\gamma^{(k)}_i\}_{i=1,\dots,m_k}=\underset{(\gamma_1,\dots,\gamma_{m_k})\in\mathbb{R}^{m_k}}{\arg\min}\,\dnorm{r^{(k-m_k)}+\sum_{i=1}^{m_k}\gamma_i\,s^{(k-m_k+i)}}_2$\\
        $x^{(k+1)}=g(x^{(k-m_k)})+\sum_{i=1}^{m_k}\gamma^{(k)}_i(g(x^{(k-m_k+i)})-g(x^{(k-m_k)}))$ (\textbf{version A})\\
        \textbf{or}\\
        $x^{(k+1)}=g(\widetilde{x}^{(k+1)})\text{ with }\widetilde{x}^{(k+1)}=x^{(k-m_k)}+\sum_{i=1}^{m_k}\gamma^{(k)}_i(x^{(k-m_k+i)}-x^{(k-m_k)})$ (\textbf{version P})
    }
   $r^{(k+1)}=f(x^{(k+1)})$\\
   $s^{(k+1)}=r^{(k+1)}-r^{(k-m_k)}$\\
   compute $\Pi_k s^{(k+1)}$ (the orthogonal projection of $s^{(k+1)}$ onto $span\{s^{(k-m_k+1)},\dots,s^{(k)}\}$)\\
   \eIf{$\tau\,\norm{s^{(k+1)}}_2>\norm{(\id-\Pi_k)s^{(k+1)}}_2$}{
      $m_{k+1}=0$\\
      }{
      $m_{k+1}=m_k+1$\\
    }
   $k=k+1$\\
}
\caption{Restarted Anderson-Pulay acceleration.}\label{alg:cdiis-restart-variant}
\end{algorithm}

\smallskip

\begin{algorithm}[H]
\KwData{$x^{(0)}$, $tol$, $\delta$}
$r^{(0)}=f(x^{(0)})$\\
$k=0$\\
$m_k=0$\\
\While{$\dnorm{r^{(k)}}_2>\text{tol}$}{
   \eIf{$m_k=0$}{
        $x^{(k+1)}=g(x^{(k)})$\\
        }{
        solve the unconstrained least-squares problem for the coefficients $\{\alpha^{(k)}_i\}_{i=1,\dots,m_k}=\underset{(\alpha_1,\dots,\alpha_{m_k})\in\mathbb{R}^{m_k}}{\arg\min}\dnorm{r^{(k)}-\sum_{i=1}^{m_k}\alpha_i\,s^{(k-m_k+i)}}_2$\\
        $x^{(k+1)}=g(x^{(k)})-\sum_{i=1}^{m_k}\alpha^{(k)}_i(g(x^{(k-m_k+i)})-g(x^{(k-m_k+i-1)}))$ (\textbf{version A})\\
        \textbf{or}\\
        $x^{(k+1)}=g(\widetilde{x}^{(k+1)})\text{ with }\widetilde{x}^{(k+1)}=x^{(k)}-\sum_{i=1}^{m_k}\alpha^{(k)}_i(x^{(k-m_k+i)}-x^{(k-m_k+i-1)})$ (\textbf{version P})
    }
   $r^{(k+1)}=f(x^{(k+1)})$\\
   $s^{(k+1)}=r^{(k+1)}-r^{(k)}$\\
   set $m_{k+1}$ the largest integer $m\leq m_k+1$ such that for $k+1-m\leq i\leq k$, $\delta\,\norm{r^{(i)}}_2<\norm{r^{(k+1)}}_2$\\
   $k=k+1$
}
\caption{Adaptive-depth Anderson--Pulay acceleration.}\label{alg:cdiis-adaptive-depth-variant}
\end{algorithm}

The rest of this section is devoted to theoretical convergence results for Algorithms \ref{alg:cdiis-restart-variant} and \ref{alg:cdiis-adaptive-depth-variant}. Since we will analyse the behaviour of infinite sequences, the stopping parameter $tol$ will be set to zero. If there exists some natural integer $k_\text{stop}$ for which $r^{(k_\text{stop})}=0$, we will also adopt the convention that ${x}^{(k)}={x}^{(k_\text{stop})}$, ${r}^{(k)}=0$ and $m_k=0$ for all natural integers $k$ greater or equal to $k_\text{stop}$. Moreover, whenever $m_k=0$, we will take $c^{(k)}_0=1$.

\subsection{Linear convergence}\label{linear convergence}
We shall now study under natural assumptions the convergence of the modified Anderson--Pulay acceleration methods devised in the previous subsection. To this end, we consider the following functional setting.

Let $n$ and $p$ be two nonzero natural integers, $\Sigma$ be a smooth submanifold in $\mathbb{R}^n$, $V$ be an open subset of $\mathbb{R}^n$, $f$ be a function in $\mathscr{C}^2(V,\mathbb{R}^p)$, $g$ be a function in $\mathscr{C}^2(V,\Sigma)$, and $x_*$ in $V\cap\Sigma$ be a fixed point of $g$ satisfying $f(x_*)=0$. We next make two assumptions.

\begin{enumerate}[\bf {Assumption} 1.]
\item\label{assumption 1} There exists a constant $K$ in $(0,1)$ such that
\[ 
\forall x\in V\cap g^{-1}(V) \cap \Sigma,\ \norm{(f\circ g)(x)}_2\leq K\,\norm{f(x)}_2.
\] 
\item\label{assumption 2} There exists a positive constant $\sigma$ such that
\[
\forall x\in V\cap\Sigma,\ \sigma \norm{x-x_*}_2\leq \norm{f(x)}_2.
\]
\end{enumerate}
Note that these assumptions ensure that $x_*$ is the unique fixed point of $g$ in $V$ and the unique solution of the equation $f(x)=0$ in $V\cap\Sigma$. There might be other zeroes of $f$ lying outside of $\Sigma$, but they would not be fixed points of $g$.

\smallskip

Since $f$ and $g$ are both of class $\mathscr{C}^2$, we may assume, taking $U\subset V\cap g^{-1}(V)$ a smaller neighbourhood of $x_*$ if necessary, that
\begin{multline}\label{mean value inequalities}
\forall(x,y)\in U^2,\ \norm{f(x)-f(y)}_2\leq 2\,\norm{\mathrm{D}f(x_*)}_2\norm{x-y}_2\\\text{ and }\norm{(f\circ g)(x)-(f\circ g)(y)}_2\leq 2\,\norm{\mathrm{D}f(x_*)\circ\mathrm{D}g(x_*)}_2\norm{x-y}_2,
\end{multline}
where $\mathrm{D}f(x_*)$ and $\mathrm{D}g(x_*)$ are the respective differentials of $f$ and $g$ at point $x_*$, and that one also has, for some positive constants $L$ and $L'$,
\begin{equation}\label{eq:f_C2_estimate}
\forall x\in U,\ \norm{f(x)-\mathrm{D}f(x_*)(x-x_*)}_2\leq\frac{L}{2}\,{\norm{x-x_*}_2}^2,
\end{equation}
\begin{equation}\label{eq:g_C2_estimate}
\forall x\in U,\ \norm{g(x)-x_*-\mathrm{D}g(x_*)(x-x_*)}_2\leq\frac{L'}{2}\,{\norm{x-x_*}_2}^2.
\end{equation}

We may also impose that $U$ be a tubular neighbourhood of $\Sigma \cap U$ containing $x_*$ and such that the formula
\[
P_\Sigma(x)= \underset{y\in\Sigma}{\arg\min}\,\norm{x-y}_2
\]
defines a smooth nonlinear projection operator $P_\Sigma$ from $U$ to $\Sigma\cap U$ (see {\it e.g.} \cite{Spivak:1999}). Let $P_{T_{x_*}\Sigma}$ be the orthogonal projector onto the tangent space $T_{x_*}\Sigma$ to $\Sigma$ at $x_*$. For $U$ small enough, there exists a positive constant $M$ such that it holds:
\begin{equation}
\label{eq:projector-linear-projector-difference}
\forall x \in U, \ \norm{P_\Sigma(x)-(x_*+P_{T_{x_*}\Sigma}(x-x_*))}_2 \leq \frac{M}{2} {\norm{x-x_*}_2}^2.
\end{equation}

Note that the introduction of the submanifold $\Sigma$ is needed in view of applications to quantum chemistry, as presented in Section \ref{electronic ground state calculation}; the above assumptions will be discussed in such a context.

\medskip

When the submanifold $\Sigma$ is not an affine subspace of $\mathbb{R}^n$, one may observe that the iterate $x^{(k+1)}$ generated by version A of the algorithms may in general lie outside of $\Sigma$, which does not seem appropriate. This is the main reason for the introduction of version P, starting with the CDIIS in quantum chemistry (see \cite{Pulay:1982}). On the contrary, when $\Sigma$ is an affine subspace of $\mathbb{R}^n$, it is isometric to $\mathbb{R}^{n'}$ for some integer $n'$ such that $n'\leq n$, and all the iterates of both versions lie in $\Sigma$, so there is no difference with the case $\Sigma=\mathbb{R}^n$. The theoretical results of the present paper are thus stated for version P of the algorithms when $\Sigma$ is arbitrary, and for version A assuming that $\Sigma=\mathbb{R}^n$. 

As seen in Section \ref{presentation}, the usual abstract framework for the DIIS and/or the Anderson acceleration is a special case of ours: there is no submanifold (that is, one takes $\Sigma=\mathbb{R}^n$), the integers $p$ and $n$ are the same, and the functions $f$ and $g$ are related by \eqref{assumed relation between f and g}. In this setting, Assumptions \ref{assumption 1} and \ref{assumption 2}  are satisfied as soon as the norm of $\mathrm{D}g(x_*)$ is strictly lower than one. The theoretical results of the present paper are, of course, still valid and, as far as we know, new in this more restrictive setting.\medskip

Before stating the convergence results, let us first recall the terminology associated with the different types of linear convergence (see also Appendix A of \cite{Nocedal:2006}). Let $(x^{(k)})_{k\in\mathbb{N}}$ be a sequence in a normed vector space $E$, equipped with the norm $\norm{\cdot}$, that converges to some $x_*$ in $E$. The convergence is said to be (at least) \emph{q-linear} with rate $\mu$ in $(0,1)$ if
\[
\frac{\norm{x^{(k+1)}-x_*}}{\norm{x^{(k)}-x_*}}\leq\mu,\text{ for all $k$ sufficiently large}.
\]
In addition, the convergence of the sequence is said to be (at least) \emph{r-linear} with rate $\mu$ in $(0,1)$ if there exists a positive constant $C$ such that
\[
\norm{x^{(k)}-x_*}\leq C\,\mu^k,\text{ for all $k$ sufficiently large}.
\]

In dealing with the convergence of the acceleration techniques considered in the present work, we shall say that a method is \emph{locally} q-linearly (resp. r-linearly) convergent if, for $\norm{r^{(0)}}_2$ small enough, the sequence of error vectors $(r^{(k)})_{k\in\mathbb{N}}$ converges at least q-linearly (resp. r-linearly) to the zero vector in $\mathbb{R}^p$. Note that, from Assumption \ref{assumption 2}, this implies that the sequence of iterates $(x^{(k)})_{k\in\mathbb{N}}$ converges at least r-linearly to $x_*$ in $\mathbb{R}^n$ with the same rate.

\medskip

Our first result deals with the convergence of the restarted Anderson--Pulay acceleration method.

\begin{thrm}\label{thm:linear-convergence-DIIS-with-restarts}
Let Assumptions \ref{assumption 1} and \ref{assumption 2} hold and let $\mu$ be a real number such that $K<\mu<1$. 
There exists a positive constant $R_{\mu}$ such that, for any choice of the restart parameter $\tau$ in the interval $(0,1)$, if the initial point $x^{(0)}$ in $U\cap\Sigma$ satisfies $0<\norm{r^{(0)}}_2\leq R_\mu\tau^{2p}$ and one runs version P of Algorithm \ref{alg:cdiis-restart-variant} (or version A in the case $\Sigma=\mathbb{R}^n$), the sequences $(x^{(k)})_{k\in\mathbb{N}}$ and $(r^{(k)})_{k\in\mathbb{N}}$ are well-defined and, as long as $\norm{r^{(k)}}_2$ is nonzero, one has
\begin{equation}\label{bound on m_k}
m_k\leq\min(k,p),
\end{equation}
\begin{equation}\label{bound on the extrapolation coefficients}
\norm{c^{(k)}}_\infty\leq C_{m_k}\left(1+\frac{1}{(\tau(1-\mu))^{m_k}}\right),
\end{equation}
where $C_{m_k}$ is a positive constant depending only on $m_k$, and
\begin{equation}\label{r-linear convergence with respect to restarts}
\norm{r^{(k+1)}}_2\leq\mu^{m_k+1}\norm{r^{(k-m_k)}}_2.
\end{equation}
As a consequence, one has
\begin{equation}\label{r-linear convergence global}
\forall k\in\mathbb{N},\ \norm{r^{(k)}}_2\leq\mu^k\,\norm{r^{(0)}}_2,
\end{equation}
meaning that the method is locally r-linearly convergent.

\noindent
Moreover, there exists a positive constant $\Gamma$, independent of the parameter $\tau$, such that, if a restart occurs at step $k+1$ (that is, if $m_{k+1}=0$), one has
\begin{equation}\label{estimate at restart}
\left(1-K(1+\tau)\right)\norm{r^{(k+1)}}_2\leq \left( K\tau+\frac{\Gamma}{\tau^{2m_k}}\norm{r^{(k-m_k)}}_2 \right) \norm{r^{(k-m_k)}}_2.
\end{equation}
\end{thrm}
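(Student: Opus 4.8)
The plan is a single strong induction on $k$, establishing at once that the iterates stay in $U\cap\Sigma$, that the depth satisfies \eqref{bound on m_k}, that the coefficients satisfy \eqref{bound on the extrapolation coefficients}, and that the per-block contraction \eqref{r-linear convergence with respect to restarts} holds; the global rate \eqref{r-linear convergence global} and the restart estimate \eqref{estimate at restart} are then read off at the end. The radius $R_\mu$ is fixed only at the last step, so that every second-order remainder is dominated by the available linear contraction, and the restart condition keeps $S^{(k)}$ full rank, which makes the coefficients well-defined. The bound $m_k\le\min(k,p)$ is the easy part: $m_k\le k$ is built into the algorithm, while $m_k\le p$ follows because the stored differences $s^{(k-m_k+i)}=r^{(k-m_k+i)}-r^{(k-m_k)}$ live in $\mathbb{R}^p$ and the no-restart branch of \eqref{restart condition, abstract problem} forces each newly appended difference to have a nonzero component orthogonal to the span of its predecessors; once $m_k=p$ that span is all of $\mathbb{R}^p$, so $\Pi_k=\id$ and the next step must restart.

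For the one-step decrease I would treat version P first. There $r^{(k+1)}=(f\circ g)(\widetilde{x}^{(k+1)})$, so Assumption \ref{assumption 1} gives $\norm{r^{(k+1)}}_2\le K\,\norm{f(\widetilde{x}^{(k+1)})}_2$ as soon as $\widetilde{x}^{(k+1)}\in\Sigma$; when $\Sigma$ is curved one first replaces $\widetilde{x}^{(k+1)}$ by $P_\Sigma(\widetilde{x}^{(k+1)})\in\Sigma$, the discrepancy being second order by \eqref{mean value inequalities} and \eqref{eq:projector-linear-projector-difference}. Writing $\bar r^{(k)}=\sum_{i=0}^{m_k}c_i^{(k)}r^{(k-m_k+i)}$ for the minimised linearised residual, the constraint $\sum_i c_i^{(k)}=1$ together with \eqref{eq:f_C2_estimate} yields $\norm{f(\widetilde{x}^{(k+1)})-\bar r^{(k)}}_2\le\tfrac{L}{2}\big(\norm{\widetilde{x}^{(k+1)}-x_*}_2^2+\sum_i\nabs{c_i^{(k)}}\,\norm{x^{(k-m_k+i)}-x_*}_2^2\big)$, which by Assumption \ref{assumption 2} is controlled by $\norm{c^{(k)}}_1^2\,\norm{r^{(k-m_k)}}_2^2$ up to a constant; moreover $\norm{\bar r^{(k)}}_2\le\norm{r^{(k)}}_2$ since putting all the weight on the last iterate is admissible. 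The case $m_k=0$ is immediate, $\norm{r^{(k+1)}}_2\le K\norm{r^{(k)}}_2\le\mu\norm{r^{(k)}}_2$. Version A with $\Sigma=\mathbb{R}^n$ is reduced to the same bound by one extra linearisation of $f$ at $\sum_i c_i^{(k)}g(x^{(k-m_k+i)})$, again using \eqref{mean value inequalities}.

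The crux is the coefficient bound \eqref{bound on the extrapolation coefficients}, which feeds the remainder above. Here the induction is used in a self-reinforcing way: the per-block estimate \eqref{r-linear convergence with respect to restarts} at the earlier steps of the current block gives $\norm{r^{(k-m_k+i)}}_2\le\mu^i\norm{r^{(k-m_k)}}_2$ for $1\le i\le m_k$, hence $(1-\mu)\norm{r^{(k-m_k)}}_2\le\norm{s^{(k-m_k+i)}}_2\le2\norm{r^{(k-m_k)}}_2$. Since the unconstrained coefficients solve $\gamma^{(k)}=-({S^{(k)}}^\top S^{(k)})^{-1}{S^{(k)}}^\top r^{(k-m_k)}$ with $S^{(k)}=[\,s^{(k-m_k+1)}\mid\cdots\mid s^{(k)}\,]$, one has $\norm{\gamma^{(k)}}_2\le\norm{r^{(k-m_k)}}_2/\sigma_{\min}(S^{(k)})$. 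Reading the no-restart branch of \eqref{restart condition, abstract problem} at each appended column, the $R$ factor of a QR factorisation $S^{(k)}=QR$ has diagonal entries $R_{ii}=\norm{(\id-\Pi_{i-1})s^{(k-m_k+i)}}_2\ge\tau\,\norm{s^{(k-m_k+i)}}_2\ge\tau(1-\mu)\norm{r^{(k-m_k)}}_2$; bounding the entries of $R^{-1}$ through its triangular structure then gives $\sigma_{\min}(S^{(k)})\gtrsim(\tau(1-\mu))^{m_k}\norm{r^{(k-m_k)}}_2$, whence \eqref{bound on the extrapolation coefficients} via $\norm{c^{(k)}}_\infty\le1+\norm{\gamma^{(k)}}_1$. \textbf{This is the step I expect to be the main obstacle}: converting the one-step, $\tau$-calibrated near-independence criterion into a uniform lower bound on the smallest singular value forces one to track the accumulation through the triangular factor, which is precisely what produces the exponent $m_k$ and, after squaring in the remainder, the factor $\tau^{-2m_k}$.

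Closing the induction is then bookkeeping. Substituting \eqref{bound on the extrapolation coefficients} into the remainder gives $\norm{f(\widetilde{x}^{(k+1)})-\bar r^{(k)}}_2\le\Gamma''\tau^{-2m_k}\norm{r^{(k-m_k)}}_2^2$ with $\Gamma''$ independent of $\tau$, so that, using $\norm{\bar r^{(k)}}_2\le\norm{r^{(k)}}_2\le\mu^{m_k}\norm{r^{(k-m_k)}}_2$ from the induction hypothesis, $\norm{r^{(k+1)}}_2\le K\mu^{m_k}\norm{r^{(k-m_k)}}_2+K\Gamma''\tau^{-2m_k}\norm{r^{(k-m_k)}}_2^2$. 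Requiring $K\Gamma''\tau^{-2m_k}\norm{r^{(k-m_k)}}_2\le\mu^{m_k}(\mu-K)$ closes \eqref{r-linear convergence with respect to restarts}, and since $\norm{r^{(k-m_k)}}_2\le\norm{r^{(0)}}_2$ and $m_k\le p$ this holds once $\norm{r^{(0)}}_2\le R_\mu\tau^{2p}$ with $R_\mu=\mu^p(\mu-K)/(K\Gamma'')$; the same smallness, through Assumption \ref{assumption 2} and \eqref{bound on the extrapolation coefficients}, keeps $\widetilde{x}^{(k+1)}$ and $x^{(k+1)}$ in $U$. Chaining \eqref{r-linear convergence with respect to restarts} across consecutive blocks yields \eqref{r-linear convergence global}. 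Finally, a restart at step $k+1$ means $\norm{(\id-\Pi_k)s^{(k+1)}}_2<\tau\norm{s^{(k+1)}}_2$; the normal equations give $\bar r^{(k)}=(\id-\Pi_k)r^{(k-m_k)}=(\id-\Pi_k)r^{(k+1)}-(\id-\Pi_k)s^{(k+1)}$, so with $\norm{s^{(k+1)}}_2\le\norm{r^{(k+1)}}_2+\norm{r^{(k-m_k)}}_2$ one gets $\norm{\bar r^{(k)}}_2\le(1+\tau)\norm{r^{(k+1)}}_2+\tau\norm{r^{(k-m_k)}}_2$; inserting this into $\norm{r^{(k+1)}}_2\le K\norm{\bar r^{(k)}}_2+K\Gamma''\tau^{-2m_k}\norm{r^{(k-m_k)}}_2^2$ and collecting the $\norm{r^{(k+1)}}_2$ terms produces exactly \eqref{estimate at restart} with $\Gamma=K\Gamma''$, which is $\tau$-independent.
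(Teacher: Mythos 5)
Your proposal is correct and follows essentially the same route as the paper's proof: a strong induction maintaining the per-block contraction $\norm{r^{(k-m_k+i)}}_2\leq\mu^i\norm{r^{(k-m_k)}}_2$ together with the lower bound on the orthogonal components enforced by the no-restart branch of \eqref{restart condition, abstract problem}, the resulting $(\tau(1-\mu))^{-m_k}$ coefficient bound feeding the quadratic remainder, the smallness condition $\norm{r^{(0)}}_2\leq R_\mu\tau^{2p}$ absorbing that remainder, and the restart estimate obtained by bounding the minimal linearised residual through $\norm{(\id-\Pi_k)s^{(k+1)}}_2<\tau\norm{s^{(k+1)}}_2$. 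The only cosmetic difference is that the paper packages the coefficient bound and the one-step residual estimates into a separate lemma (Lemma \ref{lem:induction step}, also reused for the adaptive-depth variant) proved via an explicit Gram--Schmidt change of basis, whereas you phrase the same computation as a lower bound on the smallest singular value of $S^{(k)}$ through its triangular QR factor, whose diagonal entries are exactly the distances $d_i^{(k)}$ of the lemma.
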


\medskip

Our second theorem deals with the convergence of the adaptive-depth variant of the Anderson--Pulay acceleration method.

\begin{thrm}\label{thm:linear-convergence-adaptive-depth}
Let Assumptions \ref{assumption 1} and \ref{assumption 2} hold and let $\mu$ be a real number such that $K<\mu<1$. There exists a positive constant $c_\mu$ such that, for any choice of the parameter $\delta$ in the interval $(0,K)$, if the initial point $x^{(0)}$ in $U\cap\Sigma$ satisfies $0<\norm{r^{(0)}}_{2}\leq c_\mu\delta^2$ and one runs version P of Algorithm \ref{alg:cdiis-adaptive-depth-variant} (or version A in the case $\Sigma=\mathbb{R}^n$), the sequences $(x^{(k)})_{k\in\mathbb{N}}$ and $(r^{(k)})_{k\in\mathbb{N}}$ are well-defined and, as long as $\norm{r^{(k)}}_2>0$, one has
\begin{equation}\label{thm:sliding:enum:mk}
m_{k}\leq\min(k,p),
\end{equation}
\begin{equation}\label{thm:sliding:enum:ck}
\norm{c^{(k)}}_{\infty}\leq C_{m_{k}}\left(1+\left(\dfrac{\mu}{1-\mu}\right)^{m_{k}}\right),
\end{equation}
where $C_{m_k}$ is a positive constant depending only on $m_k$, and
\begin{equation}\label{thm:sliding:enum:rkmu}
\norm{r^{(k+1)}}_{2}\leq\mu\norm{r^{(k)}}_{2}.
\end{equation}
The method is thus locally q-linearly convergent.

\noindent
Moreover, 
if $k-m_k\geq 1$, then one has
\begin{equation}\label{thm:sliding:enum:rkdelta}
\norm{r^{(k)}}_{2}\leq\delta\norm{r^{(k-m_k-1)}}_{2}.
\end{equation}
\end{thrm}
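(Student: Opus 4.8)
The plan is to prove the four estimates simultaneously by strong induction on $k$, the q-linear bound \eqref{thm:sliding:enum:rkmu} being the engine that drives everything else. The two depth relations are the easy part: $m_k \le k$ is immediate from $m_0 = 0$ and $m_{k+1} \le m_k + 1$, while \eqref{thm:sliding:enum:rkdelta} and the bound $m_k \le p$ I would extract from the depth-selection rule itself. For \eqref{thm:sliding:enum:rkdelta}, note that the start index $a_k := k - m_k$ is nondecreasing (again because $m_{k+1}\le m_k+1$); when $a_k$ strictly increases at step $k$ the selection rule must have rejected the index $a_k - 1$, which is exactly $\norm{r^{(k)}}_2 \le \delta\,\norm{r^{(k-m_k-1)}}_2$, and when $a_k$ stays constant I would propagate the bound from step $k-1$ using the already-established \eqref{thm:sliding:enum:rkmu} and $\mu<1$. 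I expect $m_k \le p$ to follow from the linear independence of the stored error differences $s^{(k-m_k+1)},\dots,s^{(k)}$, which are $m_k$ vectors in $\mathbb{R}^p$; independence in turn comes from their geometric grading (see below), so that no more than $p$ of them can be retained.

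For the core estimate \eqref{thm:sliding:enum:rkmu}, I would treat version P first. Writing $r^{(k+1)} = (f\circ g)(\widetilde{x}^{(k+1)})$ with $\widetilde{x}^{(k+1)} = \sum_{i=0}^{m_k} c_i^{(k)} x^{(k-m_k+i)}$ and $\sum_i c_i^{(k)}=1$, I would replace $\widetilde{x}^{(k+1)}$ by its projection $P_\Sigma(\widetilde{x}^{(k+1)})\in\Sigma$, the discrepancy being second order by \eqref{eq:projector-linear-projector-difference}, and then invoke Assumption \ref{assumption 1} to get $\norm{r^{(k+1)}}_2 \le K\,\norm{f(\widetilde{x}^{(k+1)})}_2 + (\text{second order})$. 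Since the coefficients sum to one, a Taylor expansion of $f$ about $x_*$ using \eqref{eq:f_C2_estimate} shows that $f(\widetilde{x}^{(k+1)})$ equals the minimised linearised error $\rho^{(k)} := \sum_{i=0}^{m_k} c_i^{(k)} r^{(k-m_k+i)}$ up to a second-order term, and least-squares optimality gives $\norm{\rho^{(k)}}_2 \le \norm{r^{(k)}}_2$ (take the trivial combination $c^{(k)}_{m_k}=1$). Hence $\norm{r^{(k+1)}}_2 \le K\,\norm{r^{(k)}}_2 + R$, with $R$ collecting all second-order contributions. For version A with $\Sigma=\mathbb{R}^n$ the argument is the same after linearising $f\circ g$ and using Assumption \ref{assumption 1} in the form $\norm{\mathrm{D}(f\circ g)(x_*)\,v}_2 \le K\,\norm{\mathrm{D}f(x_*)\,v}_2$.

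The decisive step is the control of the remainder $R$. Each second-order term is of the type $\norm{c^{(k)}}_\infty \sum_i \norm{x^{(k-m_k+i)} - x_*}_2^2$; bounding $\norm{x^{(j)}-x_*}_2 \le \sigma^{-1}\norm{r^{(j)}}_2$ by Assumption \ref{assumption 2}, and using the depth-selection inequality $\norm{r^{(k-m_k+i)}}_2 \le \delta^{-1}\norm{r^{(k)}}_2$ valid for every retained index, I obtain $R \le C\,\delta^{-2}\norm{r^{(k)}}_2^2$, where $C$ absorbs $p$, $\sigma$, the $\mathscr{C}^2$ constants and the coefficient bound \eqref{thm:sliding:enum:ck}. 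This is exactly where the factor $\delta^2$ in the smallness hypothesis originates: imposing $\norm{r^{(k)}}_2 \le c_\mu\delta^2$ with $c_\mu \le (\mu-K)/C$ forces $R \le (\mu - K)\norm{r^{(k)}}_2$, whence $\norm{r^{(k+1)}}_2 \le \mu\,\norm{r^{(k)}}_2$. The q-linear decrease then keeps $\norm{r^{(k)}}_2 \le \norm{r^{(0)}}_2 \le c_\mu\delta^2$ for all $k$, so the hypothesis reproduces itself along the iteration and the induction closes.

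The main obstacle is the circularity between \eqref{thm:sliding:enum:ck}, the bound $m_k\le p$ and \eqref{thm:sliding:enum:rkmu}: the remainder estimate above needs the coefficient bound and linear independence, whereas both of these rest on the geometric decay $\norm{r^{(k-m_k+i)}}_2 \le \delta^{-1}\norm{r^{(k)}}_2$ together with the consecutive ratios $\le \mu$ that \eqref{thm:sliding:enum:rkmu} supplies. I would break this circle inside the induction by deriving \eqref{thm:sliding:enum:ck} from the normal equations associated with the minimisation of $\norm{r^{(k-m_k)} + \sum_i \gamma_i s^{(k-m_k+i)}}_2$: since $s^{(k-m_k+i)} = r^{(k-m_k+i)} - r^{(k-m_k+i-1)}$ is dominated by the older, larger error $r^{(k-m_k+i-1)}$, the differences inherit a geometric grading, their Gram matrix is invertible (yielding both independence, hence $m_k\le p$, and the well-definedness of $c^{(k)}$), and estimating its inverse after rescaling by the decaying norms produces the stated factor $\bigl(\mu/(1-\mu)\bigr)^{m_k}$. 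Carrying out this conditioning estimate rigorously, while tracking that all constants remain uniform in $k$ and independent of $\delta$, is the most delicate and technical portion of the argument.
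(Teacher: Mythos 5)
Your overall architecture (strong induction; the decomposition $\norm{r^{(k+1)}}_2\leq K\cdot(\text{optimal linearised error})+(\text{quadratic remainder})$; the use of the window bound $\norm{r^{(k-m_k+i)}}_2\leq\delta^{-1}\norm{r^{(k)}}_2$ and the smallness hypothesis $\norm{r^{(0)}}_2\leq c_\mu\delta^2$ to absorb the remainder; the extraction of \eqref{thm:sliding:enum:rkdelta} from the selection rule) matches the paper's proof. But there is a genuine gap at the point you yourself identify as the most delicate: your plan for obtaining the coefficient bound \eqref{thm:sliding:enum:ck}, the affine independence of the stored residuals (hence $m_k\leq p$), and the invertibility of the least-squares Gram matrix is to argue that the ``geometric grading'' of the residual \emph{norms} makes the differences $s^{(i)}$ linearly independent and their Gram matrix well-conditioned. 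This is false: decay of norms constrains magnitudes, not directions. All the $r^{(k-m_k+i)}$ could be scalar multiples of a single vector with norms decaying at rate exactly $\mu$; every window and ratio condition you invoke would be satisfied, yet the $s^{(i)}$ would be collinear, the Gram matrix singular for $m_k\geq 2$, and the extrapolation coefficients unbounded. No conditioning estimate can be extracted from norm decay alone, so the circle you describe is not actually broken by your argument.

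The paper closes this circle by a different mechanism, which is the central idea of the whole analysis. The inductive hypothesis carries a quantitative affine-independence property: the distance $d_\ell^{(k)}$ from each stored residual to the affine span of the earlier ones satisfies $d_\ell^{(k)}\geq\frac{1-\mu}{\mu}\norm{r^{(k-m_k+\ell)}}_2$. Given this, Lemma \ref{lem:induction step} (Gram--Schmidt on the $s^{(i)}$, with $\abs{\tilde\lambda_i}\leq\norm{r^{(k)}}_2/d_i^{(k)}$) yields the coefficient bound and the two error estimates \eqref{lem:inductionRlast}--\eqref{lem:induction1KRlast}. The property is then \emph{propagated to step $k+1$ by contradiction using the error estimate itself}: if $d^{(k)}_{m_k+1}<\frac{1-\mu}{\mu}\norm{r^{(k+1)}}_2$, then \eqref{lem:induction1KRlast} combined with $\norm{r^{(k-m_k)}}_2\leq\delta^{-2}\norm{r^{(k+1)}}_2$ (two applications of the $\delta$-window, at steps $k$ and $k+1$ --- this is where the exponent $2$ in $c_\mu\delta^2$ is actually consumed) and the smallness constraint on $c_\mu$ forces $\norm{r^{(k+1)}}_2\leq 0$, which is absurd. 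In words: either the new residual is affinely well-separated from the stored ones, or it is so small that the adaptive criterion discards the offending iterates before the next least-squares problem is posed. Without this dichotomy --- or some substitute for it --- your induction cannot close, because nothing in your sketch rules out near-affine-dependence of the residuals while all norms behave as prescribed.
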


In both theorems, we have given an upper bound on the extrapolation coefficients $c^{(k)}$ that depends on $m_k$ (see \eqref{bound on the extrapolation coefficients} and \eqref{thm:sliding:enum:ck}, respectively). Using the fact that $m_k$ is less than or equal to $p$, one immediately infers from these inequalities that the coefficients $c^{(k)}$ are bounded independently of $k$, which is sufficient to prove convergence. Nevertheless, this uniform estimate is rough, as it is observed in practical calculations that the depth is generally much smaller than the dimension $p$.

\smallskip

In addition to the local linear convergence estimates \eqref{r-linear convergence with respect to restarts} and \eqref{thm:sliding:enum:rkmu}, we have also provided some technical estimates on the residual -- inequalities \eqref{estimate at restart} and \eqref{thm:sliding:enum:rkdelta}, respectively -- which are crucial for the proofs of the acceleration properties in the next subsection.

\smallskip

Theorems \ref{thm:linear-convergence-DIIS-with-restarts} and \ref{thm:linear-convergence-adaptive-depth} are proved in Subsection \ref{proofs of the results}. Note that for version P, inequality \eqref{eq:g_C2_estimate} is not used in the proofs and one just needs the function $g$ to be of class $\mathscr{C}^1$. For version A, on the contrary, \eqref{eq:g_C2_estimate} is needed, but, of course, \eqref{eq:projector-linear-projector-difference} is not, since it is assumed that $\Sigma=\mathbb{R}^n$.


\subsection{Acceleration}\label{acceleration}
In this subsection, we study from a mathematical viewpoint the local acceleration properties of the proposed variants of the Anderson--Pulay acceleration. We prove that, for any choice of the real number $\lambda$ in $(0,K)$, r-linear convergence with rate $\lambda$ can be achieved, meaning that $\norm{r^{(k)}}_2=O(\lambda^k)$, with Algorithm \ref{alg:cdiis-restart-variant} (for $\tau$ and $\norm{r^{(0)}}_2$ small enough) or with Algorithm \ref{alg:cdiis-adaptive-depth-variant} (for $\delta$ and $\norm{r^{(0)}}_2$ small enough). These results are direct consequences of Theorems \ref{thm:linear-convergence-DIIS-with-restarts} and \ref{thm:linear-convergence-adaptive-depth}, respectively.

\begin{crllr} \label{cor:lambda-restart}
Let Assumptions \ref{assumption 1} and \ref{assumption 2} hold and let $\lambda$ and $\mu$ be two real numbers such that $0<\lambda<K<\mu<1$. Let $\tau$ in $(0,1)$ satisfy the smallness constraint $\tau\leq\frac{1-K}{3K}\lambda^{p+1}$. Assume that the initial point $x^{(0)}$ in $U\cap\Sigma$ satisfies $0<\norm{r^{(0)}}_2\leq\min\{R_\mu,\frac{(1-K)\lambda^{p+1}}{3\Gamma}\}\tau^{2p}$, where $R_\mu$ and $\Gamma$ are the positive constants of Theorem \ref{thm:linear-convergence-DIIS-with-restarts}. If one runs version P of Algorithm \ref{alg:cdiis-restart-variant} (or version A, assuming that $\Sigma=\mathbb{R}^n$), then the sequences $(x^{(k)})_{k\in\mathbb{N}}$ and $(r^{(k)})_{k\in\mathbb{N}}$ are well-defined, with $m_k\leq \min(k,p)$ and
\[
\forall k\in\mathbb{N},\ \norm{r^{(k)}}_2\,\leq\,\mu^{m_k}\lambda^{k-m_k}\norm{r^{(0)}}_2\,\leq\,\mu^{\min(k,p)}\lambda^{\max(0,k-p)}\norm{r^{(0)}}_2.
\]
As a consequence, the sequence $(x^{(k)})_{k\in\mathbb{N}}$ converges at least r-linearly to $x_*$ with rate $\lambda$.
\end{crllr}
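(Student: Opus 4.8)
The plan is to bootstrap the two estimates supplied by Theorem~\ref{thm:linear-convergence-DIIS-with-restarts}: the within-phase geometric decay \eqref{r-linear convergence with respect to restarts} at the coarse rate $\mu$, and the sharper restart estimate \eqref{estimate at restart}, which controls the residual across a full growth phase. First I would invoke Theorem~\ref{thm:linear-convergence-DIIS-with-restarts} itself: since $\min\{R_\mu,\frac{(1-K)\lambda^{p+1}}{3\Gamma}\}\leq R_\mu$, the hypothesis $\norm{r^{(0)}}_2\leq R_\mu\tau^{2p}$ holds, so the sequences are well-defined, $m_k\leq\min(k,p)$, and the estimates \eqref{r-linear convergence with respect to restarts}, \eqref{r-linear convergence global} and \eqref{estimate at restart} are all at my disposal. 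Let $0=j_0<j_1<\cdots$ denote the successive steps at which $m_{j_\ell}=0$ (the restart points, with $j_0=0$); between two of them the depth increases by one at each step, so that $k-m_k$ is always the most recent restart point, and a reindexing of \eqref{r-linear convergence with respect to restarts} gives $\norm{r^{(k)}}_2\leq\mu^{m_k}\norm{r^{(k-m_k)}}_2$ for every $k$ (the inequality being trivial at a restart, where $m_k=0$).

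The heart of the argument is an induction over the restart points establishing $\norm{r^{(j_\ell)}}_2\leq\lambda^{j_\ell}\norm{r^{(0)}}_2$. The base case $\ell=0$ is an equality. For the inductive step, a restart occurring at step $k+1=j_{\ell+1}$ with previous restart at $k-m_k=j_\ell$ makes estimate \eqref{estimate at restart} applicable, and I would show that the multiplicative factor it produces is at most $\lambda^{p+1}$. This is the main obstacle, and it explains the precise shape of the smallness constraints: the factor is $\bigl(K\tau+\Gamma\,\tau^{-2m_k}\norm{r^{(j_\ell)}}_2\bigr)/\bigl(1-K(1+\tau)\bigr)$, and the constraints are calibrated so as to split a budget of size $(1-K)$ into three equal parts. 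Using $\tau\leq\frac{1-K}{3K}\lambda^{p+1}\leq\frac{1-K}{3K}$ bounds the denominator below by $\frac23(1-K)$ and the term $K\tau$ above by $\frac{1-K}{3}\lambda^{p+1}$; using $m_k\leq p$ (so $\tau^{-2m_k}\leq\tau^{-2p}$) together with $\norm{r^{(j_\ell)}}_2\leq\norm{r^{(0)}}_2\leq\frac{(1-K)\lambda^{p+1}}{3\Gamma}\tau^{2p}$ (the monotonicity coming from \eqref{r-linear convergence global}) bounds the second numerator term above by $\frac{1-K}{3}\lambda^{p+1}$. Hence the factor is at most $\lambda^{p+1}$. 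Since the phase has length $m_k+1=j_{\ell+1}-j_\ell\leq p+1$ and $\lambda<1$, this yields $\norm{r^{(j_{\ell+1})}}_2\leq\lambda^{p+1}\norm{r^{(j_\ell)}}_2\leq\lambda^{j_{\ell+1}}\norm{r^{(0)}}_2$, closing the induction.

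It then remains to assemble the pieces. Combining $\norm{r^{(k)}}_2\leq\mu^{m_k}\norm{r^{(k-m_k)}}_2$ with the bound just proved at the last restart $j=k-m_k$ gives $\norm{r^{(k)}}_2\leq\mu^{m_k}\lambda^{k-m_k}\norm{r^{(0)}}_2$, the first claimed inequality. The second inequality follows because $m_k\leq\min(k,p)$ and $\lambda<\mu$: writing $a=m_k\leq b=\min(k,p)$, one has $\mu^a\lambda^{k-a}=\mu^b\lambda^{k-b}(\lambda/\mu)^{b-a}\leq\mu^b\lambda^{k-b}=\mu^{\min(k,p)}\lambda^{\max(0,k-p)}$, since $(\lambda/\mu)^{b-a}\leq1$. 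Finally, for $k\geq p$ this reads $\norm{r^{(k)}}_2\leq(\mu/\lambda)^p\norm{r^{(0)}}_2\,\lambda^k$, which is r-linear decay of the residuals with rate $\lambda$; Assumption~\ref{assumption 2} then transfers this to $\norm{x^{(k)}-x_*}_2\leq\sigma^{-1}(\mu/\lambda)^p\norm{r^{(0)}}_2\,\lambda^k$, establishing the r-linear convergence of $(x^{(k)})_{k\in\mathbb{N}}$ to $x_*$ with rate $\lambda$.
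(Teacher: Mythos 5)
Your proposal is correct and follows essentially the same route as the paper: both derive from \eqref{estimate at restart} and the smallness constraints on $\tau$ and $\norm{r^{(0)}}_2$ (via the identical three-way split of the budget $1-K$) that a restart contracts the residual by at least $\lambda^{p+1}\leq\lambda^{j_{\ell+1}-j_\ell}$, and then combine this with the within-phase bound \eqref{r-linear convergence with respect to restarts} by induction. The only difference is organizational — you induct over the restart indices and recover the intermediate steps afterwards, whereas the paper runs a complete induction over all $k$ with a case split on whether $m_{k+1}=0$ — and the mathematical content is the same.
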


\begin{proof}
First of all, since $0<\norm{r^{(0)}}_2\leq R_\mu\tau^{2p}$, 
the conclusions of Theorem \ref{thm:linear-convergence-DIIS-with-restarts} hold. In particular, the sequences $(x^{(k)})_{k\in\mathbb{N}}$ and $(r^{(k)})_{k\in\mathbb{N}}$ are well-defined, with $m_k\leq\min\{k,p\}$ and
$\norm{r^{(k-m_k)}}_2\leq \mu^k\norm{r^{(0)}}_2$. From the other smallness condition on $\norm{r^{(0)}}_2$, we also have $\norm{r^{(k-m_k)}}_2\leq \frac{(1-K)\lambda^{p+1}}{3\Gamma}\tau^{2p}\leq \frac{(1-K)\lambda^{p+1}}{3\Gamma}\tau^{2 m_k}$. Finally, the parameter $\tau$ is such that $1-K(1+\tau) \geq \frac{2}{3}(1-K)$ and $K\tau \leq \frac{(1-K)\lambda^{p+1}}{3}$. Using these facts, we infer the following property from estimate \eqref{estimate at restart} of Theorem \ref{thm:linear-convergence-DIIS-with-restarts}:
\begin{equation}\label{accel1}
\text{if }m_{k+1}=0,\text{ then }\norm{r^{(k+1)}}_2\leq\lambda^{p+1}\,\norm{r^{(k-m_k)}}_2.
\end{equation}

Using \eqref{r-linear convergence with respect to restarts} and \eqref{accel1}, we are going to prove by complete induction that, for any natural integer $k$, there holds
\[
\norm{r^{(k)}}_2\leq\mu^{m_k}\lambda^{k-m_k}\norm{r^{(0)}}_2.
\]
Let us denote by ${\mathcal P(k)}$ the above property at rank $k$. Obviously, ${\mathcal P(0)}$ is true. Let us assume that the property is satisfied up to rank $k$, with $k$ a natural integer. We will establish that ${\mathcal P}(k+1)$ holds by distinguishing two cases.

First, if $m_{k+1}\geq 1$, then, using inequality \eqref{r-linear convergence with respect to restarts}, we may write that
\[
\norm{r^{(k+1)}}_2\leq\mu^{m_{k+1}}\norm{r^{(k+1-m_{k+1})}}_2.
\]
The natural integer $k+1-m_{k+1}$ being less than or equal to $k$, property ${\mathcal P(k+1-m_{k+1})}$ holds and, since $m_{k+1-m_{k+1}}=0$, it takes the form
\[
\norm{r^{(k+1-m_{k+1})}}_2\leq\lambda^{k+1-m_{k+1}}\norm{r^{(0)}}_2.
\]
We then conclude by combining the two above estimates.

\smallskip

Second, if $m_{k+1}=0$, remembering that $m_k\leq p$ and $\lambda<1$, we infer from  inequality \eqref{accel1} that
\[
\norm{r^{(k+1)}}_2\leq\lambda^{p+1}\norm{r^{(k-m_{k})}}_2\leq\lambda^{m_k+1}\norm{r^{(k-m_{k})}}_2.
\]
The integer $k-m_{k}$ being less than or equal to $k$, property ${\mathcal P(k-m_{k})}$ holds and, since $m_{k-m_{k}}=0$, it follows that
\[
\norm{r^{(k-m_{k})}}_2\leq\lambda^{k-m_{k}}\,\norm{r^{(0)}}_2,
\]
so that one reaches
\[
\norm{r^{(k+1)}}_2\leq\lambda^{m_k+1}\lambda^{k-m_{k}}\,\norm{r^{(0)}}_2=\lambda^{k+1}\,\norm{r^{(0)}}_2.
\]

\smallskip

Estimate ${\mathcal P(k)}$ thus holds for any natural integer $k$. Moreover, since $m_k\leq \min\{k,p\}$ and $\lambda<\mu$, we immediately see that $\lambda^{k-m_k}\mu^{m_k}<\lambda^{\max(0,k-p)}\mu^{\min(k,p)}$, which ends the proof.
\end{proof}

\medskip

Note that our theoretical smallness requirements on $\tau$ and $\norm{r^{(0)}}_2$ are unreasonably restrictive and certainly not representative of what is observed in applications. We will indeed see in Section \ref{numerical experiments} that acceleration is commonly achieved in practice.

\medskip

A similar result for acceleration with Algorithm~\ref{alg:cdiis-adaptive-depth-variant} follows from Theorem~\ref{thm:linear-convergence-adaptive-depth}.

\begin{crllr}\label{cor:lambda-sliding}
Let Assumptions \ref{assumption 1} and \ref{assumption 2} hold and let $\lambda$ and $\mu$ be two real numbers such that $0<\lambda<K<\mu<1$. Choose $\delta$ so that $0<\delta\leq \lambda^{p+1}$. Suppose that the initial point $x^{(0)}\in U\cap\Sigma$ satisfies $0<\norm{r^{(0)}}_2\leq c_\mu \delta^{2}$ where $c_\mu$ is the same as in Theorem \ref{thm:linear-convergence-adaptive-depth}, and run version P of Algorithm \ref{alg:cdiis-adaptive-depth-variant} (or version A in the case $\Sigma=\mathbb{R}^n$). Denote $k_{\max}=\max\{k\in\mathbb{N}\,|\,m_k=k\}$. Then one has $k_{\max}\leq p$ and, for all $k$ in $\mathbb{N}$,
\[
\norm{r^{(k)}}_2\leq \mu^{\min(k,k_{\max})}\lambda^{\max(0,k-k_{\max})}\norm{r^{(0)}}_2\leq \mu^{\min(k,p)}\lambda^{\max(0,k-p)}\norm{r^{(0)}}_2.
\]
As a consequence, $(x^{(k)})_{k\in\mathbb{N}}$ converges at least r-linearly to $x_*$ with rate $\lambda$.
\end{crllr}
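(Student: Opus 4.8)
The plan is to mirror the structure of the proof of Corollary~\ref{cor:lambda-restart}, using Theorem~\ref{thm:linear-convergence-adaptive-depth} in place of Theorem~\ref{thm:linear-convergence-DIIS-with-restarts}. Since $0<\norm{r^{(0)}}_2\leq c_\mu\delta^2$, all the conclusions of Theorem~\ref{thm:linear-convergence-adaptive-depth} are available; in particular the sequences are well-defined, one has $m_k\leq\min(k,p)$, and the two residual estimates \eqref{thm:sliding:enum:rkmu} and \eqref{thm:sliding:enum:rkdelta} hold as long as $\norm{r^{(k)}}_2>0$.

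First I would pin down the combinatorial role of $k_{\max}$. From \eqref{thm:sliding:enum:mk}, the equality $m_k=k$ forces $k\leq p$, so the set $\{k\in\mathbb{N}\,|\,m_k=k\}$ is contained in $\{0,\dots,p\}$ and, containing $0$, admits a maximum $k_{\max}\leq p$. I would then observe the key structural fact that this set is an initial segment $\{0,1,\dots,k_{\max}\}$: the adaptive rule gives $m_k\leq m_{k-1}+1$, and since $m_{k-1}\leq k-1$, the equality $m_k=k$ forces $m_{k-1}=k-1$, so by descent $m_j=j$ for every $j\leq k$. Consequently $m_k=k$ exactly when $k\leq k_{\max}$, while $k-m_k\geq1$ whenever $k>k_{\max}$.

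The engine of the proof is a single-step contraction obtained by feeding the smallness of $\delta$ into \eqref{thm:sliding:enum:rkdelta}. Since $m_k\leq p$ and $0<\lambda<1$, the hypothesis $\delta\leq\lambda^{p+1}$ yields $\delta\leq\lambda^{m_k+1}$, so that \eqref{thm:sliding:enum:rkdelta} upgrades, for every $k>k_{\max}$, to
\[
\norm{r^{(k)}}_2\leq\lambda^{m_k+1}\norm{r^{(k-m_k-1)}}_2.
\]
I would then prove by strong induction the property $\mathcal{P}(k)$ asserting that $\norm{r^{(k)}}_2\leq\mu^{\min(k,k_{\max})}\lambda^{\max(0,k-k_{\max})}\norm{r^{(0)}}_2$. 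The base case $k=0$ is an equality. For $k+1\leq k_{\max}$ one has $m_{k+1}=k+1$, and $\mathcal{P}(k+1)$ follows at once from the q-linear bound \eqref{thm:sliding:enum:rkmu} together with $\mathcal{P}(k)$. For $k+1>k_{\max}$ one sets $j=k-m_{k+1}\in\{0,\dots,k\}$, applies the displayed contraction together with $\mathcal{P}(j)$, and uses $m_{k+1}+1=k+1-j$ to collect the exponents.

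The one place requiring care --- and the step I expect to be the main obstacle --- is the exponent bookkeeping in this last case, which splits according to where the backward jump lands relative to $k_{\max}$. If $j\geq k_{\max}$, the $\mu$- and $\lambda$-powers recombine to give exactly $\mu^{k_{\max}}\lambda^{k+1-k_{\max}}$, the desired bound. If $j<k_{\max}$, one gets instead $\mu^{j}\lambda^{k+1-j}$, and it is precisely here that the ordering $\lambda<\mu$ is used: since $(\lambda/\mu)^{k_{\max}-j}<1$, this quantity is bounded above by $\mu^{k_{\max}}\lambda^{k+1-k_{\max}}$, closing the induction. The final chain $\mu^{\min(k,k_{\max})}\lambda^{\max(0,k-k_{\max})}\leq\mu^{\min(k,p)}\lambda^{\max(0,k-p)}$ then follows from $k_{\max}\leq p$ and $\lambda<\mu$, exactly as at the end of the proof of Corollary~\ref{cor:lambda-restart}; and r-linear convergence of $(x^{(k)})_{k\in\mathbb{N}}$ with rate $\lambda$ is read off from the bound $\norm{r^{(k)}}_2\leq(\mu/\lambda)^p\lambda^k\norm{r^{(0)}}_2$ valid for $k\geq p$, combined with Assumption~\ref{assumption 2}.
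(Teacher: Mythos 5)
Your proposal is correct and follows essentially the same route as the paper's proof: invoke Theorem~\ref{thm:linear-convergence-adaptive-depth}, get $k_{\max}\leq p$ from \eqref{thm:sliding:enum:mk}, upgrade \eqref{thm:sliding:enum:rkdelta} to $\norm{r^{(k)}}_2\leq\lambda^{m_k+1}\norm{r^{(k-m_k-1)}}_2$ via $\delta\leq\lambda^{p+1}$, and close a strong induction with exactly the same two-case exponent bookkeeping (using $\lambda<\mu$ when the backward jump lands below $k_{\max}$). Your explicit observation that $\{k\,|\,m_k=k\}$ is an initial segment is a harmless addition; the paper gets $k+1-m_{k+1}\geq1$ directly from the definition of $k_{\max}$.
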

\begin{proof}
Since $\delta<\lambda<K$ and $0<\norm{r^{(0)}}_2\leq c_\mu \delta^{2}$, the conclusions of Theorem \ref{thm:linear-convergence-adaptive-depth} hold. In particular, from bound \eqref{thm:sliding:enum:mk}, one has $k_{\max}\leq p$, hence $\lambda^{\max(0,k-k_{\max})}\leq \mu^{\min(k,p)}\lambda^{\max(0,k-p)}$ for any natural integer $k$. As a result, we just need to prove that the following estimate
\begin{equation}\label{accel-adaptive-depth}
\norm{r^{(k)}}_2\leq \mu^{\min(k,k_{\max})}\lambda^{\max(0,k-k_{\max})}\norm{r^{(0)}}_2
\end{equation}
holds for any natural integer $k$. To do this, we argue by complete induction.

\smallskip

On the one hand, if $k\leq k_{\max}$, inequality \eqref{accel-adaptive-depth} reduces to an estimate of r-linear convergence with rate $\mu$, which follows immediately from the q-linear convergence property \eqref{thm:sliding:enum:rkmu} of Theorem \ref{thm:linear-convergence-adaptive-depth}.

On the other hand, let $k\geq k_{\max}$ be such that $\eqref{accel-adaptive-depth}$ holds for all the natural integers less than or equal to $k$. From the definition of $k_{\max}$, one has $k+1-m_{k+1}\geq 1$, and, since $\delta\leq\lambda^{p+1}$, $m_{k+1}\leq p$ and $\lambda<1$, estimate \eqref{thm:sliding:enum:rkdelta} implies that
\[
\norm{r^{(k+1)}}_{2}\leq\lambda^{m_{k+1}+1}\norm{r^{(k-m_{k+1})}}_{2}.
\]
The natural integer  $k-m_{k+1}$ being less than or equal to $k$, one also has
\[
\norm{r^{(k-m_{k+1})}}_2\leq\mu^{\min(k-m_{k+1},k_{\max})}\lambda^{\max(0,k-m_{k+1}-k_{\max})}\norm{r^{(0)}}_2,
\]
and, combining the last two estimates, one gets
\[ 
\norm{r^{(k+1)}}_{2}\leq\mu^{\min(k-m_{k+1},k_{\max})}\lambda^{\max(m_{k+1}+1,k+1-k_{\max})}\norm{r^{(0)}}_2.
\]
In order to study the right-hand side of this inequality, we distinguish two cases.

If $k-m_{k+1}> k_{\max}$, then it holds $\mu^{\min(k-m_{k+1},k_{\max})}\lambda^{\max(m_{k+1}+1,k+1-k_{\max})}=\mu^{k_{\max}}\lambda^{k+1-k_{\max}}$, so that estimate \eqref{accel-adaptive-depth} holds for $k+1$. Otherwise, if $k-m_{k+1}\leq k_{\max}$, then it holds $\mu^{\min(k-m_{k+1},k_{\max})}\lambda^{\max(m_{k+1}+1,k+1-k_{\max})}=\mu^{k-m_{k+1}}\lambda^{m_{k+1}+1}$. Since $\lambda<\mu$, one has $\mu^{k-m_{k+1}}\lambda^{m_{k+1}+1}\leq \mu^{k_{\max}}\lambda^{k+1-k_{\max}}$ and estimate \eqref{accel-adaptive-depth} holds for $k+1$.

\smallskip

We have thus shown that estimate $\eqref{accel-adaptive-depth}$ holds for any natural integer $k$, ending the proof.
\end{proof}

\subsection{Reaching superlinear convergence}\label{superlinear}
Let us recall some standard notions of superlinear convergence (see {\it e.g.} \cite{Nocedal:2006,Potra:1989}). We shall say that a sequence $(x^{(k)})_{k\in\mathbb{N}}$ in a normed space $E$, equipped with the norm $\norm{\cdot}$, converges \emph{q-superlinearly} to $x_*$ in $E$ if
\[
\lim_{k\to+\infty}\frac{\norm{x^{(k+1)}-x_*}}{\norm{x^{(k)}-x_*}}=0.
\]
Let $\theta$ denote a real number greater than one. It is said that the superlinear convergence occurs with \emph{q-order at least $\theta$} if there exists a positive constant $a$ such that
\[
\frac{\norm{x^{(k+1)}-x_*}}{\norm{x^{(k)}-x_*}^{\theta}}\leq a,\text{ for all $k$ sufficiently large}.
\]
More generally, the sequence is said to converge \emph{r-superlinearly} to $x_*$ in $E$ if there exists a sequence of positive numbers $(\epsilon_k)_{k\in\mathbb{N}}$ converging q-superlinearly to zero and such that
\[
\norm{x^{(k)}-x_*}\leq \epsilon_k,\text{ for all $k$ sufficiently large}.
\]
If one has additionally 
\[
\forall k\in\mathbb{N},\ \epsilon_k=b\,\eta^{\theta^k}
\]
for some positive constant $b$ and $\eta$ in $(0,1)$, the sequence is said to converge superlinearly with \emph{r-order at least~$\theta$}. 

\smallskip

In some references, one can find mentions of the DIIS exhibiting a local superlinear convergence behaviour. Rohwedder and Schneider discussed in \cite{Rohwedder:2011} the circumstances under which this may occur for the DIIS method. We now propose a modification of the restarted (resp. adaptive-depth) Anderson--Pulay acceleration, in which the value of the parameter $\tau$ (resp. $\delta$) is slowly decreased along the iteration. We will show that the resulting sequence of approximations locally converges r-superlinearly to the fixed point.

\medskip

Let us first describe the modification made to Algorithm \ref{alg:cdiis-restart-variant}. One may observe that estimate \eqref{estimate at restart} in Theorem \ref{thm:linear-convergence-DIIS-with-restarts} allows a local r-superlinear convergence result, by carefully changing the value of the parameter $\tau$ after each restart. As a consequence, we modify the algorithm as follows: first, in the list of data, the positive constant $\tau$ is replaced by two positive constants $T_0$ and $\zeta$~; then, we replace the boolean test ``$\tau\,\dnorm{s^{(k+1)}}_2>\dnorm{(\id-\Pi_k)s^{(k+1)}}_2$'' by ``$T_0{\dnorm{r^{(k-m_k)}}_2}^\zeta \,\dnorm{s^{(k+1)}}_2>\dnorm{(\id-\Pi_k)s^{(k+1)}}_2$''. In other words, the fixed parameter $\tau$ in the test is replaced by an adaptive one, $\tau^{(k-m_k)}=T_0{\dnorm{r^{(k-m_k)}}_2}^\zeta$, which becomes smaller and smaller along the iteration. Naming this variant Algorithm \ref{alg:cdiis-restart-variant}', we have the following result.

\begin{thrm}\label{thm:superlinear restart}
Let Assumptions \ref{assumption 1} and \ref{assumption 2} hold and let $\mu$ be a real number such that $K<\mu<1$. Choose a positive $T_0$ and $\zeta$ in $(0,\frac{1}{2p})$. Then, there exists a positive constant $\varepsilon(T_0,\zeta,\mu)$ such that, if we run version P of Algorithm \ref{alg:cdiis-restart-variant}' (or version A in the case $\Sigma=\mathbb{R}^n$) with an initial point $x^{(0)}$ in $U\cap\Sigma$ satisfying $0<\norm{r^{(0)}}_2<\varepsilon(T_0,\zeta,\mu)$, then the sequence $(x^{(k)})_{k\in\mathbb{N}}$ is well-defined, satisfies estimates \eqref{bound on m_k} and \eqref{r-linear convergence global}, and converges superlinearly to $x_*$ with r-order at least $\left(1+\min\{\zeta,1-2p\zeta\}\right)^{\frac{1}{p}}$.
\end{thrm}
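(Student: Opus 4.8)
The plan is to reduce everything to a \emph{blockwise} application of Theorem~\ref{thm:linear-convergence-DIIS-with-restarts}. The crucial observation is that between two consecutive restarts the quantity $k-m_k$ stays constant, equal to the index at which the current block started: denoting by $n_0=0<n_1<n_2<\dots$ the successive restart indices (the steps at which $m_{n_\ell}=0$) and writing $R_\ell:=\norm{r^{(n_\ell)}}_2$, the adaptive threshold of Algorithm~\ref{alg:cdiis-restart-variant}' equals throughout block $\ell$ the \emph{fixed} number $\tau_\ell:=T_0R_\ell^{\zeta}$. Hence, restricted to block $\ell$, the variant coincides exactly with Algorithm~\ref{alg:cdiis-restart-variant} launched from $x^{(n_\ell)}$ with constant parameter $\tau_\ell$, so I may invoke Theorem~\ref{thm:linear-convergence-DIIS-with-restarts} on each block, provided its hypotheses hold there.

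First I would run an induction on $\ell$ to propagate the smallness hypotheses and the global estimates. Assuming $x^{(n_\ell)}\in U\cap\Sigma$ and $0<R_\ell\leq R_\mu\,\tau_\ell^{2p}$, the latter condition reads $R_\ell^{\,1-2p\zeta}\leq R_\mu T_0^{2p}$; since $\zeta<\frac{1}{2p}$ the exponent $1-2p\zeta$ is positive, so this is a genuine, scale-stable smallness requirement on $R_\ell$. Choosing $\varepsilon(T_0,\zeta,\mu)$ small enough that $\norm{r^{(0)}}_2<\varepsilon(T_0,\zeta,\mu)$ forces both $\tau_0\in(0,1)$ and $R_0\leq R_\mu\tau_0^{2p}$, Theorem~\ref{thm:linear-convergence-DIIS-with-restarts} applied to block $\ell$ yields \eqref{bound on m_k}, the intrablock decay \eqref{r-linear convergence with respect to restarts}, and the restart estimate \eqref{estimate at restart}. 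In particular $R_{\ell+1}\leq\mu^{\,m_{n_{\ell+1}-1}+1}R_\ell<R_\ell$, so $R_{\ell+1}$ is again small and the induction closes, giving \eqref{bound on m_k} and \eqref{r-linear convergence global} for the whole run; moreover \eqref{bound on m_k} shows a restart is forced after boundedly many steps, so $(R_\ell)_\ell$ is a well-defined, strictly decreasing (until possible termination) sequence.

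Next I would extract the superlinear contraction of the subsequence $(R_\ell)_\ell$. Substituting $\tau=\tau_\ell=T_0R_\ell^{\zeta}$ into \eqref{estimate at restart}, using that $1-K(1+\tau_\ell)$ is bounded below by a positive constant once $R_\ell$ is small (since $\tau_\ell\to0$), and using $m_{n_{\ell+1}-1}\leq p$, the two terms of the right-hand side are controlled respectively by $R_\ell^{\,1+\zeta}$ and by $\frac{\Gamma}{T_0^{2p}}\,R_\ell^{\,2-2p\zeta}$. Since $R_\ell<1$, this gives a constant $C>0$ with $R_{\ell+1}\leq C\,R_\ell^{\theta}$, where $\theta:=1+\min\{\zeta,1-2p\zeta\}=\min\{1+\zeta,2-2p\zeta\}$. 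The rescaling $y_\ell:=C^{1/(\theta-1)}R_\ell$ turns this into $y_{\ell+1}\leq y_\ell^{\theta}$, whence $y_\ell\leq y_0^{\theta^{\ell}}$ and $R_\ell\leq b\,\eta^{\theta^{\ell}}$ for some $b>0$ and $\eta\in(0,1)$: the block-start residuals converge with r-order $\theta$.

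Finally I would pass from the subsequence to the full sequence. For arbitrary $k$, letting $\ell(k)$ be the index of the block containing $k$, the intrablock estimate \eqref{r-linear convergence with respect to restarts} gives $\norm{r^{(k)}}_2\leq R_{\ell(k)}\leq b\,\eta^{\theta^{\ell(k)}}$, while the depth bound \eqref{bound on m_k}, which caps the block length, yields a lower bound for $\ell(k)$ that is linear in $k$; inserting this produces $\norm{r^{(k)}}_2\leq b'\,(\eta')^{(\theta^{1/p})^{k}}$ for suitable $b'>0$, $\eta'\in(0,1)$, i.e. r-order at least $\theta^{1/p}=(1+\min\{\zeta,1-2p\zeta\})^{1/p}$ for $(r^{(k)})_k$, and Assumption~\ref{assumption 2} transfers this to $(x^{(k)})_k$. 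I expect the main difficulty to lie in the second paragraph: one must verify, without circularity, that a single smallness condition on $\norm{r^{(0)}}_2$ simultaneously guarantees $\tau_\ell\in(0,1)$ and the hypothesis $R_\ell\leq R_\mu\tau_\ell^{2p}$ for \emph{every} block --- this is precisely where $\zeta<\frac{1}{2p}$ is indispensable, as it keeps the exponent $1-2p\zeta$ positive --- the smallness being needed to apply the theorem whose conclusion $R_{\ell+1}<R_\ell$ in turn preserves it. A secondary care point is tracking the exponent through the embedding of the last paragraph so as to land on the stated order.
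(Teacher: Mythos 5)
Your proposal is correct and follows essentially the same route as the paper's proof: the same blockwise reduction to Theorem~\ref{thm:linear-convergence-DIIS-with-restarts} with the frozen parameter $\tau^{(k_i)}=T_0\norm{r^{(k_i)}}_2^{\zeta}$, the same two-term analysis of \eqref{estimate at restart} yielding $R_{\ell+1}\leq C\,R_\ell^{\Theta}$ with $\Theta=1+\min\{\zeta,1-2p\zeta\}$, and the same transfer to the full sequence via the block-length bound. The only detail left implicit (handled in the paper by choosing a starting index $i_0$ with $B^{1/(\Theta-1)}\mu^{k_{i_0}}\norm{r^{(0)}}_2<\tfrac12$) is that the doubly exponential recursion $y_{\ell+1}\leq y_\ell^{\Theta}$ only produces decay once $y_\ell<1$, which your linear decay of the $R_\ell$ guarantees after finitely many blocks.
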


For Algorithm~\ref{alg:cdiis-adaptive-depth-variant}, the changes are the following: first, in the list of data, we replace the positive constant $\delta$ by two positive constants $D_0$ and $\xi$~; then, we replace the boolean test ``$\delta\,\norm{r^{(i)}}_2<\norm{r^{(k+1)}}_2$'' by ``$D_0\dnorm{r^{(i)}}_2^{1+\xi}<\norm{r^{(k+1)}}_2$''. That is to say, the fixed parameter $\delta$ in the test is replaced by the adaptive one, $\delta^{(i)}=D_0\dnorm{r^{(i)}}_2^\xi$. We name this variant Algorithm~\ref{alg:cdiis-adaptive-depth-variant}' and state the following result.

\begin{thrm}\label{thm:superlinear adaptive}
Let Assumptions \ref{assumption 1} and \ref{assumption 2} hold and let $\mu$ be a real number such that $K<\mu<1$. Choose a positive constant $D_0$ and $\xi$ in $(0,\sqrt{2}-1]$. In the limit case $\xi=\sqrt{2}-1$, assume in addition that $D_0\geq \Delta_\mu$, where $\Delta_\mu$ is a suitable positive constant. 
Then, there exists a positive constant $\varepsilon(D_0,\xi,\mu)$ such that, if we run version P of Algorithm~\ref{alg:cdiis-adaptive-depth-variant}' (or version A in the case $\Sigma=\mathbb{R}^n$) with an initial point $x^{(0)}$ in $U\cap\Sigma$ satisfying $0<\norm{r^{(0)}}_2<\varepsilon(D_0,\xi,\mu)$, the sequence $(x^{(k)})_{k\in\mathbb{N}}$ is well-defined, satisfies estimates \eqref{thm:sliding:enum:mk}, \eqref{thm:sliding:enum:ck}, and \eqref{thm:sliding:enum:rkmu}, and converges superlinearly to $x_*$ with r-order at least $\left(1+\xi\right)^{\frac{1}{p+1}}$.
\end{thrm}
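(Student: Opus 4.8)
The plan is to reduce the statement to a single superlinear recursion for the residual norms $e_k:=\norm{r^{(k)}}_2$ and then close it by strong induction. First I would check that the conclusions of Theorem~\ref{thm:linear-convergence-adaptive-depth} survive the replacement of the fixed $\delta$ by the adaptive thresholds $\delta^{(i)}=D_0\norm{r^{(i)}}_2^\xi$: for $\norm{r^{(0)}}_2$ small, q-linearity forces $\norm{r^{(i)}}_2\leq\norm{r^{(0)}}_2$, so every $\delta^{(i)}$ lies below $K$ and the analysis leading to \eqref{thm:sliding:enum:mk}–\eqref{thm:sliding:enum:rkmu} can be replayed step by step, giving again $m_k\leq\min(k,p)$ and $e_{k+1}\leq\mu\,e_k$. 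The superlinear engine comes for free from the modified test itself: whenever $k-m_k\geq1$, the iterate $k-m_k-1$ was dropped precisely because it violated the criterion, so the adaptive form of \eqref{thm:sliding:enum:rkdelta} reads
\[
e_k\leq\delta^{(k-m_k-1)}\,\norm{r^{(k-m_k-1)}}_2=D_0\,e_{k-m_k-1}^{\,1+\xi},\qquad k-m_k\geq1 .
\]

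Next I would set $\theta=(1+\xi)^{1/(p+1)}>1$, $b=D_0^{-1/\xi}$, fix any $\eta\in(0,1)$, and let $k_{\max}=\max\{k:m_k=k\}\leq p$ as in Corollary~\ref{cor:lambda-sliding}. The claim to prove by strong induction is $e_k\leq b\,\eta^{\theta^k}$ for all $k$. For $k\leq k_{\max}$ the drop estimate is vacuous, but $e_k\leq e_0$ by q-linearity, and imposing the smallness condition $\norm{r^{(0)}}_2\leq b\,\eta^{\theta^p}$ (absorbed into $\varepsilon(D_0,\xi,\mu)$) gives $e_k\leq b\,\eta^{\theta^p}\leq b\,\eta^{\theta^k}$ since $\theta^k\leq\theta^p$. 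For $k>k_{\max}$ one has $k-m_k\geq1$, and the adaptive drop estimate together with the inductive hypothesis yields $e_k\leq D_0\,e_{k-m_k-1}^{\,1+\xi}\leq D_0\,b^{1+\xi}\,\eta^{(1+\xi)\theta^{k-m_k-1}}$. The key arithmetic is that $(1+\xi)\theta^{k-m_k-1}=\theta^{p+1}\theta^{k-m_k-1}=\theta^{\,k+(p-m_k)}\geq\theta^k$ because $m_k\leq p$ and $\theta>1$, so $\eta^{(1+\xi)\theta^{k-m_k-1}}\leq\eta^{\theta^k}$; combined with $D_0\,b^{1+\xi}=b\,(D_0 b^{\xi})=b$ this closes the induction. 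Hence $e_k\leq b\,\eta^{\theta^k}$, which is exactly r-superlinear convergence of $(r^{(k)})$ with r-order at least $(1+\xi)^{1/(p+1)}$, and by Assumption~\ref{assumption 2} the iterates $(x^{(k)})$ converge with the same r-order.

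The genuinely delicate point, and the one I expect to be the main obstacle, is the first step: showing that \eqref{thm:sliding:enum:mk}, \eqref{thm:sliding:enum:ck} and \eqref{thm:sliding:enum:rkmu} really do persist when $\delta$ is replaced by the shrinking $\delta^{(i)}$. This is not automatic, because the smallness hypothesis underpinning Theorem~\ref{thm:linear-convergence-adaptive-depth} is of quadratic type $\norm{r^{(\cdot)}}_2\lesssim(\delta^{(\cdot)})^2$, and as the residuals (hence the thresholds $\delta^{(i)}=D_0e_i^\xi$) shrink one must verify that this condition is \emph{propagated} from one step to the next rather than broken. Balancing the superlinear decay exponent $1+\xi$ furnished by the drop estimate against the exponent $2$ coming from the $\mathscr{C}^2$ remainder bounds \eqref{eq:f_C2_estimate}–\eqref{eq:projector-linear-projector-difference} is what forces $(1+\xi)^2\leq2$, that is $\xi\leq\sqrt2-1$; at the endpoint $\xi=\sqrt2-1$ the propagation is marginal and closes only if $D_0$ is large enough, which is the purpose of the extra hypothesis $D_0\geq\Delta_\mu$. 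Once this bookkeeping on how $\delta^{(i)}$ couples to the quadratic error terms and to the coefficient bound \eqref{thm:sliding:enum:ck} is carried out, the inductive argument above is routine.
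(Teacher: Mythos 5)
Your overall strategy is the paper's: re-run the induction of Theorem \ref{thm:linear-convergence-adaptive-depth} with the threshold $\delta$ replaced by $D_0\norm{r^{(i)}}_2^{\xi}$, extract the drop estimate $\norm{r^{(k)}}_2\leq D_0\norm{r^{(k-m_k-1)}}_2^{1+\xi}$, and convert it into r-order $(1+\xi)^{1/(p+1)}$. Your closing induction is actually a little cleaner than the paper's: you prove $\norm{r^{(k)}}_2\leq b\,\eta^{\theta^k}$ directly by strong induction using the exact index $k-m_k-1$ and the identity $D_0b^{1+\xi}=b$, whereas the paper first coarsens the drop estimate to $\norm{r^{(k)}}_2\leq D_0\norm{r^{(k-p-1)}}_2^{1+\xi}$ and inducts on blocks of length $p+1$. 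Both give the same r-order; your arithmetic $(1+\xi)\theta^{k-m_k-1}=\theta^{k+p-m_k}\geq\theta^k$ is correct.

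The one place where your proposal stops short is precisely the step you flag as "the main obstacle" and then defer: the persistence of \eqref{thm:sliding:enum:mk}--\eqref{thm:sliding:enum:rkmu} under the shrinking thresholds. This is not a side remark in the paper — it is the bulk of the proof. Concretely, the paper re-runs the induction with property $(b_k)$ modified to $D_0\norm{r^{(k-m_k+i)}}_2^{1+\xi}\leq\norm{r^{(k-m_k+j)}}_2$, and the crux is the lower bound $d^{(k)}_{m_k+1}\geq\frac{1-\mu}{\mu}\norm{r^{(k+1)}}_2$ needed to propagate $(c_{k+1})$: applying the modified $(b)$ property twice gives $\norm{r^{(k-m_k)}}_2\leq D_0^{-\frac{2+\xi}{(1+\xi)^2}}\norm{r^{(k+1)}}_2^{\frac{1}{(1+\xi)^2}}$, so the quadratic remainder in \eqref{lem:induction1KRlast} is of order $\norm{r^{(k+1)}}_2^{\frac{2}{(1+\xi)^2}}$, and it can be absorbed into the left-hand side exactly when $\frac{2}{(1+\xi)^2}\geq 1$ — with strict inequality one shrinks $\varepsilon(D_0,\xi,\mu)$, and at equality one must instead take $D_0\geq\Delta_\mu$. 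Your diagnosis of this mechanism (including the origin of $\xi\leq\sqrt{2}-1$ and of the endpoint hypothesis on $D_0$) is accurate, but the computation itself is the non-routine content of the theorem and would need to be written out; note also that your opening claim that "the analysis can be replayed step by step" because each $\delta^{(i)}$ lies below $K$ is, as you yourself later concede, not a valid reduction to Theorem \ref{thm:linear-convergence-adaptive-depth}, whose hypothesis $\norm{r^{(0)}}_2\leq c_\mu\delta^2$ cannot hold uniformly as $\delta^{(i)}\to0$.
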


Both theorems \ref{thm:superlinear restart} and \ref{thm:superlinear adaptive} are proved at the end of Subsection \ref{proofs of the results}.

\medskip

Note that the best theoretical value of the r-order given by Theorem \ref{thm:superlinear restart} is $\left(1+\frac{1}{2p+1}\right)^{\frac{1}{p}}$. It corresponds to the choice $\zeta=\frac{1}{2p+1}$. For Theorem \ref{thm:superlinear adaptive}, the best theoretical value is $2^{\frac{1}{2p+2}}$, corresponding to $\xi=\sqrt{2}-1$. In both cases, it is very close to one when $p$ is large.

\smallskip

In the numerical experiments reported in the last part of the paper, we only ran Algorithms \ref{alg:cdiis-restart-variant} and  \ref{alg:cdiis-adaptive-depth-variant} with fixed values of $\tau$ and $\delta$ and obtained excellent acceleration results for self-consistent field iterations. Nevertheless, Theorems \ref{thm:superlinear restart} and \ref{thm:superlinear adaptive} suggest that it may be beneficial in practical calculations to decrease the value of the parameters $\tau$ and $\delta$ in an adaptive way along the iteration. We plan to investigate this in the future.

\subsection{Proofs of the theorems}\label{proofs of the results}
For 
linear problems in finite dimension, if $g$ is a contraction, then it is well-known that the DIIS (or the Anderson acceleration) converges to the exact solution in a finite number of steps (see Subsection \ref{equivalence with GMRES in the linear case}).
In the nonlinear case, this is no longer true due to the presence of quadratic terms. Despite these additional terms, convergence at any given linear rate should be allowed, on the condition that the depth is large enough and one starts close enough to the solution. Then, if one allows the depth to grow slowly along the process, superlinear convergence should occur.

To prove these properties, one has to control the quadratic errors and a bound on the size of the extrapolation coefficients at each step is needed. This amounts to quantitatively measuring the affine independence of the error vectors, since the optimal coefficients are solution to a least-squares problem whose solubility is directly related to this independence.

In the existing literature, it is generally\footnote{An exception is made in the paper by Zhang \textit{et al.} \cite{Zhang:2020}, where a bound on the coefficients is shown for a restarted type-I Anderson acceleration method (the DIIS rather corresponds to a type-II Anderson acceleration). The authors then prove a global linear convergence result when $f$ is the gradient of a convex functional, but the linear rate they obtain is no better than the rate of the basic iteration process.} assumed that the coefficients remain bounded throughout the iteration. While we do not know how to prove \textit{a priori} such an estimate for the ``classical'' fixed-depth Anderson--Pulay acceleration, the mechanisms employed in the restarted and adaptive-depth variants we study allow to derive one. A key theoretical ingredient is that the problem for the extrapolation coefficients will become poorly conditioned when the norm of the last stored error vector is much smaller than that of the oldest one. This phenomenon is rigorously described in Lemma \ref{lem:induction step} below, which constitutes the main technical tool in our proofs of both convergence and acceleration of the method.

In the restarted Anderson--Pulay acceleration, the role of the parameter $\tau$ is to control the affine independence of the error vectors. Lemma \ref{lem:induction step} shows that, as long as a restart does not occur, the least-squares problem remains well-conditioned and the size of the coefficients is bounded, whereas the norm of the last stored error vector is necessarily much smaller than that of the oldest one when it does. Since there must be a restart after at most $p+1$ consecutive iterations, convergence with acceleration can be established.

In the adaptive-depth version of the Anderson--Pulay acceleration, the parameter $\delta$ is used to eliminate the stored iterates which are not ``relevant'' enough when compared with the most recent one. This criterion does not directly quantify the independence of the error vectors, and it is certainly not good when the initial guess is chosen far from the solution, since a large number of iterates will be kept in that case. However, starting close enough to the solution, Lemma \ref{lem:induction step} allows to inductively prove a bound on the extrapolation coefficients, for reasons similar to those invoked with the restarted variant. Indeed, at each step, either the stored error vectors are affinely independent, and the extrapolation coefficients are bounded, or the norm of the last stored error vector is smaller than $\delta$ times that of some of the oldest ones. In the latter case, the criterion discards the corresponding iterates, which results in a restoration of the condition number associated with the least-squares problem at the next step. In addition, observing that a stored iterate is dismissed at least once in every $p+1$ consecutive iterations, it is inferred that accelerated convergence is possible for a parameter $\delta$ chosen small enough.

\subsubsection{A preliminary lemma}
We first introduce some notations. We recall that the set $U$, introduced in Subsection \ref{linear convergence}, is a small neighborhood of $x_*$ such that the estimates \eqref{mean value inequalities}, \eqref{eq:f_C2_estimate}, \eqref{eq:g_C2_estimate}, and \eqref{eq:projector-linear-projector-difference}
hold. For any natural integers $k$ and $m_k$ such that $k\geq m_k\geq 1$, consider a family $(x^{(k-m_k)},\dots,x^{(k)})$ of vectors in $U\cap\Sigma\setminus\{x_*\}$. For any integer $i$ in $\{0,\dots,m_k\}$, set $r^{(k-m_k+i)}=f(x^{(k-m_k+i)})$, and, for any integer $\ell$ in $\{0,\dots,m_{k}\}$, let us denote by $\mathcal{A}_{\ell}^{(k)}$ the affine span of $\{r^{(k-m_k)},\dots,r^{(k-m_k+\ell)}\}$, that is
\[\mathcal{A}_{\ell}^{(k)}=\operatorname{Aff}\left\{r^{(k-m_{k})},r^{(k-m_{k}+1)},\dots,r^{(k-m_k+\ell)}\right\}=\left\{r=\sum_{i=0}^{\ell}c_i\,r^{(k-m_k+i)}\,|\,\sum_{i=0}^{\ell}c_i=1\right\},\]
and, for any integer $\ell$ in $\{1,\dots,m_k\}$, by $d_{\ell}^{(k)}$ the distance between $r^{(k-m_k+\ell)}$ and $\mathcal{A}_{\ell-1}^{(k)}$, that is
\[d_{\ell}^{(k)}=\min_{r\in\mathcal{A}_{\ell-1}^{(k)}}\norm{r^{(k-m_k+\ell)}-r}_2.\]

\begin{lmm}\label{lem:induction step}
Let $k$ and $m_k$ be two integers such that $k\geq m_k\geq 1$, let $(x^{(k-m_k)},\dots,x^{(k)})$ be a family of vectors in $U\cap\Sigma\setminus\{x_*\}$ and $t$ be a positive real number. For any integer $i$ in $\{0,\dots,m_k\}$, set $r^{(k-m_k+i)}=f(x^{(k-m_k+i)})$. Assume that
\begin{equation}\label{lower bound on distance}
\forall i\in\{1,\dots,m_k\},\ d^{(k)}_i\geq t\max_{j\in\{i,\dots,m_k\}}\norm{r^{(k-m_k+j)}}_2.
\end{equation}
Then, the vectors $r^{(k-m_k)},\dots,r^{(k)}$ are affinely independent, so that one has $m_k\leq p$ and there is a unique $\tilde{c}$ in $\mathbb{R}^{m_k+1}$ such that $\,\sum_{i=0}^{m_k}\tilde{c}_i=1$ and $\norm{\sum_{i=0}^{m_k}\tilde{c}_ir^{(k-m_k+i)}}_2=\mathrm{dist}_2(0,\mathcal{A}^{(k)}_{m_k})$. Moreover, there exists a positive constant $C_{m_k}$ such that
\begin{equation}\label{bound on the extrapolation coefficients, general setting}
\norm{\tilde{c}}_\infty\leq C_{m_k}\,(1+t^{-m_k}).
\end{equation}

Under the additional assumption that $\tilde{x}^{(k+1)}=\sum_{i=0}^{m_k}\tilde{c}_i\,x^{(k-m_k+i)}$ belongs to $U$, as well as $g(\tilde{x}^{(k+1)})$ and $\sum_{i=0}^{m_k}\tilde{c}_i\,g(x^{(k-m_k+i)})$, define 
\[
x^{(k+1)}=g(\tilde{x}^{(k+1)})\ \text{ (version P) }\ \text{ or  }\ x^{(k+1)}=\sum_{i=0}^{m_k}\tilde{c}_i\,g(x^{(k-m_k+i)})\ \text{ (version A)}
\]
and set $r^{(k+1)}=f(x^{(k+1)})$, $d^{(k)}_{m_k+1}=\mathrm{dist}_2(r^{(k+1)},\mathcal{A}^{(k)}_{m_k})$. Then, there exists a positive constant $\kappa$
such that
\begin{equation}\label{lem:inductionRlast}
\norm{r^{(k+1)}}_2\leq K\,\mathrm{dist}_2(0,\mathcal{A}^{(k)}_{m_k})+\kappa(1+t^{-2m_k})\,\max_{i\in\{0,\dots,m_k\} }{\norm{r^{(k-m_k+i)}}_2}^2
\end{equation}
and
\begin{equation}\label{lem:induction1KRlast}
(1-K)\,\norm{r^{(k+1)}}_2\leq K\,d^{(k)}_{m_k+1}+\kappa(1+t^{-2m_k})\,\max_{i\in\{0,\dots,m_k\} }{\norm{r^{(k-m_k+i)}}_2}^2.
\end{equation}
\end{lmm}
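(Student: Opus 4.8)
The plan is to establish the three conclusions in the order they are stated: first the geometric facts (affine independence, $m_k\le p$, existence/uniqueness of $\tilde c$ and the bound \eqref{bound on the extrapolation coefficients, general setting}), which rely only on Assumption~\ref{assumption 2} and the hypothesis \eqref{lower bound on distance}; then the residual estimate \eqref{lem:inductionRlast}, which brings in Assumption~\ref{assumption 1} and the second-order bounds; and finally \eqref{lem:induction1KRlast}, which is an elementary consequence of \eqref{lem:inductionRlast}.

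First I would note that each $x^{(k-m_k+i)}\in\Sigma\setminus\{x_*\}$, so Assumption~\ref{assumption 2} gives $\norm{r^{(k-m_k+i)}}_2\ge\sigma\norm{x^{(k-m_k+i)}-x_*}_2>0$. Hence the right-hand side of \eqref{lower bound on distance} is strictly positive and every $d_i^{(k)}>0$, i.e. $r^{(k-m_k+i)}\notin\mathcal{A}_{i-1}^{(k)}$ for all $i$; this is precisely affine independence of $r^{(k-m_k)},\dots,r^{(k)}$, which in $\mathbb{R}^p$ forces $m_k\le p$. Affine independence makes $\mathcal{A}_{m_k}^{(k)}$ a genuine $m_k$-dimensional affine subspace, so the point of $\mathcal{A}_{m_k}^{(k)}$ nearest $0$ is unique and its barycentric coordinates $\tilde c$ are uniquely determined.

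For the coefficient bound — which I expect to be the main obstacle — I would write $\rho_i=r^{(k-m_k+i)}$ and run Gram--Schmidt on the directions $s_\ell=\rho_\ell-\rho_0$, whose orthogonal components $v_\ell$ satisfy $\norm{v_\ell}_2=d_\ell^{(k)}$, so the distances are exactly the Gram--Schmidt heights. Setting $e_\ell=v_\ell/d_\ell^{(k)}$ and using the recursion $q^{(\ell)}=q^{(\ell-1)}-\langle\rho_0,e_\ell\rangle\,e_\ell$ for the projection of $0$, expanding each $e_\ell$ in barycentric form yields a triangular recursion for the coordinates whose elementary factor is $\abs{\langle\rho_0,e_\ell\rangle}/d_\ell^{(k)}$. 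The key identity $\langle\rho_0,e_\ell\rangle=\langle\rho_\ell,e_\ell\rangle-d_\ell^{(k)}$ (valid because $\rho_0$ and the foot of $\rho_\ell$ on $\mathcal{A}_{\ell-1}^{(k)}$ differ by an element already orthogonal to $e_\ell$), combined with the fact that the maximum in \eqref{lower bound on distance} runs over indices $j\ge\ell$ so that $\norm{\rho_\ell}_2\le d_\ell^{(k)}/t$, bounds each such factor by $1+t^{-1}$. The same reasoning bounds the barycentric coordinates of the intermediate feet of $\rho_\ell$ on $\mathcal{A}_{\ell-1}^{(k)}$ (using $\norm{\rho_\ell}_2\le d_i^{(k)}/t$ for $i\le\ell$), and feeding these into the triangular recursion, a short induction on $\ell$ shows that every coordinate is a polynomial of degree at most $m_k$ in $1+t^{-1}$, giving \eqref{bound on the extrapolation coefficients, general setting}. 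The delicate point is obtaining a clean, configuration-independent bound in $t$ alone even though the norms $\norm{\rho_i}_2$ may differ wildly; this hinges entirely on the $j\ge\ell$ form of the hypothesis.

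For \eqref{lem:inductionRlast} in version~P, I would set $y=\tilde x^{(k+1)}=\sum_i\tilde c_i\,x^{(k-m_k+i)}$ (which generally leaves $\Sigma$) and $z=P_\Sigma(y)\in\Sigma$, then split $\norm{r^{(k+1)}}_2\le\norm{f(g(y))-f(g(z))}_2+\norm{f(g(z))}_2$. The first term is at most $2\norm{\mathrm{D}f(x_*)\circ\mathrm{D}g(x_*)}_2\,\mathrm{dist}_2(y,\Sigma)$ by \eqref{mean value inequalities}, and \eqref{eq:projector-linear-projector-difference} — applied to each $x^{(k-m_k+i)}\in\Sigma$ and to $y$ — controls $\mathrm{dist}_2(y,\Sigma)$ by a quadratic quantity $\le C\norm{\tilde c}_\infty\max_i\norm{x^{(k-m_k+i)}-x_*}_2^2+\tfrac{M}{2}\norm{y-x_*}_2^2$. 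For the second term Assumption~\ref{assumption 1} applies since $z\in\Sigma$, giving $\norm{f(g(z))}_2\le K\norm{f(z)}_2$; replacing $f(z)$ by $f(y)$ costs another quadratic term via \eqref{mean value inequalities}, and replacing $f(y)=f(\sum_i\tilde c_i x^{(k-m_k+i)})$ by $\sum_i\tilde c_i f(x^{(k-m_k+i)})$ costs a second-order term bounded through \eqref{eq:f_C2_estimate} (the linear parts cancel because $\sum_i\tilde c_i=1$). Since $\norm{\sum_i\tilde c_i f(x^{(k-m_k+i)})}_2=\mathrm{dist}_2(0,\mathcal{A}_{m_k}^{(k)})$, collecting gives $\norm{r^{(k+1)}}_2\le K\,\mathrm{dist}_2(0,\mathcal{A}_{m_k}^{(k)})+(\text{quadratic})$. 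Finally, Assumption~\ref{assumption 2} converts $\norm{x^{(k-m_k+i)}-x_*}_2\le\norm{r^{(k-m_k+i)}}_2/\sigma$, and $\norm{y-x_*}_2\le\sum_i\abs{\tilde c_i}\norm{x^{(k-m_k+i)}-x_*}_2\le C\norm{\tilde c}_\infty\max_i\norm{r^{(k-m_k+i)}}_2$, so every remainder carries at most a factor $\norm{\tilde c}_\infty^2$, hence a factor $(1+t^{-2m_k})$ by \eqref{bound on the extrapolation coefficients, general setting}; this produces \eqref{lem:inductionRlast}. Version~A with $\Sigma=\mathbb{R}^n$ needs no projection: one pushes the affine combination through $f$ and then through $f\circ g$, using \eqref{eq:g_C2_estimate} for the $\mathscr{C}^2$ bound on $f\circ g$, and applies Assumption~\ref{assumption 1} directly at $y\in V\cap g^{-1}(V)$. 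Lastly, \eqref{lem:induction1KRlast} follows at once from \eqref{lem:inductionRlast} together with $\mathrm{dist}_2(0,\mathcal{A}_{m_k}^{(k)})\le\norm{r^{(k+1)}}_2+d_{m_k+1}^{(k)}$ (take the foot of $r^{(k+1)}$ on $\mathcal{A}_{m_k}^{(k)}$ as a competitor for the distance from $0$) and moving the resulting $K\norm{r^{(k+1)}}_2$ to the left.
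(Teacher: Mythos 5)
Your proposal is correct and follows essentially the same route as the paper: Gram--Schmidt on the difference vectors to convert the distance hypothesis \eqref{lower bound on distance} into a bound on the barycentric coordinates, quadratic Taylor remainders controlled through Assumption \ref{assumption 2} and that coordinate bound, the projection $P_\Sigma$ combined with \eqref{eq:projector-linear-projector-difference} for version P (resp.\ \eqref{eq:g_C2_estimate} for version A), Assumption \ref{assumption 1} for the factor $K$, and the triangle inequality for \eqref{lem:induction1KRlast}. The only notable variation is that you anchor the representation of the least-squares minimiser at $\rho_0$ and therefore need the identity $\langle\rho_0,e_\ell\rangle=\langle\rho_\ell,e_\ell\rangle-d_\ell^{(k)}$ to avoid the uncontrolled norm $\norm{\rho_0}_2$, whereas the paper anchors at $r^{(k)}$, whose norm is bounded by $d_i^{(k)}/t$ for every $i$, which makes the coefficient bound slightly more direct.
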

\begin{proof}
Set $s^{(i)}=r^{(k-m_k+i)}-r^{(k-m_k+i-1)}$, for any integer $i$ in $\{1,\dots,m_k\}$, $q^{(1)}=s^{(1)}$ and $q^{(i)}=(\id-\Pi_{\vec{\mathcal{A}}^{(k)}_{i-1}})s^{(i)}$, for any integer $i$ in $\{2,\dots,m_k\}$, where $\Pi_{\vec{\mathcal{A}}^{(k)}_{i-1}}$ denotes the orthogonal projector onto $\vec{\mathcal{A}}^{(k)}_{i-1}$, the underlying vector space of $\mathcal{A}^{(k)}_{i-1}$. Then, the vectors $q^{(1)},\dots,q^{(m_k)}$ are mutually orthogonal and, for any integer $i$ in $\{1,\dots,m_k\}$, one has $\norm{q^{(i)}}_2=d^{(k)}_i$.\\
We may write
\[
\sum_{i=0}^{m_k}\tilde{c}_i\,r^{(k-m_k+i)}=r^{(k)}+\sum_{i=1}^{m_k}\tilde{\zeta}_is^{(i)}=r^{(k)}+\sum_{i=1}^{m_k}\tilde{\lambda}_iq^{(i)},
\]
with $\tilde{\lambda}_i=-\frac{(q^{(i)})^\top r^{(k)}}{(d^{(k)}_i)^2}$, so that $\abs{\tilde{\lambda}_i}\leq\frac{\norm{r^{(k)}}_2}{d^{(k)}_i}\leq\frac{1}{t}$, due to lower bound \eqref{lower bound on distance}.
\medskip

\noindent
On the other hand, $\tilde{\zeta}=P\tilde{\lambda}$, where $P$ is the change-of-basis matrix from $\{s^{(i)}\}_{i=1,\dots,m_k}$ to $\{q^{(i)}\}_{i=1,\dots,m_k}$. We need a bound on $\norm{P}$. For that purpose, we introduce the matrix factorisation $P=P^{(2)}P^{(3)}\dots P^{(m_k)}$ where, for any integer $j$ in $\{2,\dots,m_k\}$, $P^{(j)}$ is the change-of-basis matrix from $\{q^{(1)},\dots,q^{(j-1)},s^{(j)},\dots,s^{(m_k)}\}$ to $\{q^{(1)},\dots,q^{(j)},s^{(j+1)},\dots,s^{(m_k)}\}$. Since $q^{(1)}=s^{(1)}$, and $q^{(j)}=s^{(j)}-\sum_{i=1}^{j-1}\frac{(s^{(j)})^\top q^{(i)}}{(q^{(i)})^\top q^{(i)}}\,q^{(i)}$, we have
\[
\forall i\in\{1,\dots,m_k\},\ (P^{(j)})_{ii}=1\text{ and},\ \forall i\in\{1,\dots,j-1\},\ (P^{(j)})_{ij}=-\frac{(s^{(j)})^\top q^{(i)}}{(q^{(i)})^\top q^{(i)}},
\]
all the other coefficients of this matrix being zero. It follows that
\[
\forall i\in\{1,\dots,j-1\},\ \abs{(P^{(j)})_{ij}}\leq\frac{\norm{s^{(j)}}_2}{d^{(k)}_i}\leq \frac{\norm{r^{(k-m_k+j)}}_2+\norm{r^{(k-m_k+j-1)}}_2}{d^{(k)}_i}\leq\frac{2}{t}.
\]
As a consequence, there holds an estimate of the form
\[
\norm{P^{(j)}}_\infty\leq C(1+t^{-1})
\]
for some positive constant $C$ depending only on $m_k$, which thus yields
\[
\norm{P}_\infty\leq \norm{P^{(2)}}_\infty\norm{P^{(3)}}_\infty\dots \norm{P^{(m_k)}}_\infty\leq C^{m_k-1}(1+t^{-1})^{m_k-1}.
\]
Hence, it follows that
\[
\norm{\tilde{\zeta}}_\infty\leq C^{m_k-1}(1+t^{-1})^{m_k-1}\norm{\tilde{\lambda}}_\infty\leq C^{m_k-1}(1+t^{-1})^{m_k-1}t^{-1}.
\]
Finally, one has $\tilde{c}_0=-\tilde{\zeta}_1$, $\tilde{c}_i=\tilde{\zeta}_{i}-\tilde{\zeta}_{i+1}$, $1\leq i\leq m_k-1$, and $\tilde{c}_{m_k}=1+\tilde{\zeta}_{m_k}$, so that $\norm{\tilde{c}}_\infty\leq C_{m_k}(1+t^{-m_k})$, for some positive constant $C_{m_k}$ depending only on $m_k$, thus proving bound \eqref{bound on the extrapolation coefficients, general setting} on the coefficients.

\smallskip

Next, we remark that estimate \eqref{lem:induction1KRlast} is an immediate consequence of \eqref{lem:inductionRlast}. Indeed, by the triangle inequality, one has
\[
\mathrm{dist}_2(0,\mathcal{A}^{(k)}_{m_k})\leq\norm{r^{(k+1)}}_2+d^{(k)}_{m_k+1}.
\]

It only remains to prove the bound \eqref{lem:inductionRlast} on $\norm{r^{(k+1)}}_2$. As a first step, we are going to estimate the norm of $\tilde{r}^{(k+1)}=f(\tilde{x}^{(k+1)})$. One has
\[
\norm{\tilde{r}^{(k+1)}-\sum_{i=0}^{m_k}\tilde{c}_ir^{(k-m_k+i)}}_2\leq\norm{f(\tilde{x}^{(k+1)})-\mathrm{D}f(x_*)(\tilde{x}^{(k+1)}-x_*)}_2+\norm{\sum_{i=0}^{m_k}\tilde{c}_if(x^{(k-m_k+i)})-\mathrm{D}f(x_*)(\tilde{x}^{(k+1)}-x_*)}_2 
\]
so that, using inequality~\eqref{eq:f_C2_estimate} and the fact that the coefficients $(\tilde{c}_i)_{0\leq i\leq m_k}$ sum to $1$, 
\begin{align*}
\norm{\tilde{r}^{(k+1)}-\sum_{i=0}^{m_k}\tilde{c}_i r^{(k-m_k+i)}}_2&\leq\frac{L}{2}\norm{\sum_{i=0}^{m_k}\tilde{c}_i(x^{(k-m_k+i)}-x_*)}_2^2+\norm{\sum_{i=0}^{m_k}\tilde{c}_i(f(x^{(k-m_k+i)})-\mathrm{D}f(x_*)(x^{(k-m_k+i)}-x_*))}_2\nonumber\\
&\leq\frac{L}{2}(m_k+1)\norm{\tilde{c}}_\infty((m_k+1)\norm{\tilde{c}}_\infty+1)\underset{i\in\{0,\dots,m_k\}}{\max}\,{\norm{x^{(k-m_k+i)}-x_*}_2}^2. 
\end{align*}
It thus follows from the definition of the coefficients $\tilde{c}_i$ that
\begin{align*}
\norm{\tilde{r}^{(k+1)}}_2&\leq\norm{\tilde{r}^{(k+1)}-\sum_{i=0}^{m_k}\tilde{c}_i\,r^{(k-m_k+i)}}_2+\mathrm{dist}_2(0,\mathcal{A}^{(k)}_{m_k})\\
&\leq\frac{L}{2}(m_k+1) \norm{\tilde{c}}_\infty((m_k+1)\norm{\tilde{c}}_\infty+1)\underset{i\in\{0,\dots,m_k\}}{\max}\,{\norm{x^{(k-m_k+i)}-x_*}_2}^2+\mathrm{dist}_2(0,\mathcal{A}^{(k)}_{m_k}).
\end{align*}
Using bound \eqref{bound on the extrapolation coefficients, general setting}, we find a constant $C'_{m_k}>0$ independent of $t$, such that
\begin{equation}\label{C'}
(m_k+1) \norm{\tilde{c}}_\infty((m_k+1)\norm{\tilde{c}}_\infty+1)\leq C'_{m_k} (1+t^{-2m_k}).
\end{equation}
So, using Assumption~\ref{assumption 2}, we finally obtain
\begin{equation}\label{common estimate}
\norm{\tilde{r}^{(k+1)}}_2\leq\frac{L}{2\sigma^2} C'_{m_k}(1+t^{-2m_k})\underset{i\in\{0,\dots,m_k\}}{\max}\,{\norm{r^{(k-m_k+i)}}_2}^2+\mathrm{dist}_2(0,\mathcal{A}^{(k)}_{m_k}).
\end{equation}

In the remaining part of the proof, we treat separately version P and version A.

\begin{itemize}
\item \textbf{Version P.} Recall that, in this version, the set $\Sigma$ is an arbitrary smooth submanifold and one takes $x^{(k+1)}=g(\tilde{x}^{(k+1)})$. An estimate on the distance between $\tilde{x}^{(k+1)}$ and $\Sigma$ is needed. By the triangle inequality, one has
\[
\norm{\tilde{x}^{(k+1)}-P_\Sigma(\tilde{x}^{(k+1)})}_2\leq\norm{\tilde{x}^{(k+1)}-x_*-P_{T_{x_*}\Sigma}(\tilde{x}^{(k+1)}-x_*)}_2+\norm{x_*+P_{T_{x_*}\Sigma}(\tilde{x}^{(k+1)}-x_*)-P_\Sigma(\tilde{x}^{(k+1)})}_2,
\]
and we will thus get bounds for both contributions in the right-hand side of this inequality. For the first one, remembering that $x^{(k-m_k+i)}=P_{\Sigma}(x^{(k-m_k+i)})$ for any integer $i$ in $\{0,\dots,m_k\}$ and using \eqref{eq:projector-linear-projector-difference}, we reach
\begin{align*}
\norm{\tilde{x}^{(k+1)}-x_*-P_{T_{x_*}\Sigma}(\tilde{x}^{(k+1)}-x_*)}_2&=\norm{\sum_{i=0}^{m_k}\tilde{c}_i\,(\id-P_{T_{x_*}\Sigma})(x^{(k-m_k+i)}-x_*)}_2\\
&\leq \sum_{i=0}^{m_k}\abs{\tilde{c}_i}\norm{(\id-P_{T_{x_*}\Sigma})(x^{(k-m_k+i)}-x_*)}_2\\
&=\sum_{i=0}^{m_k}\abs{\tilde{c}_i}\norm{P_\Sigma(x^{(k-m_k+i)})-x_*-P_{T_{x_*}\Sigma}(x^{(k-m_k+i)}-x_*)}_2\\
&\leq\frac{M}{2}\sum_{i=0}^{m_k}\abs{\tilde{c}_i}{\norm{x^{(k-m_k+i)}-x_*}_2}^2\\
&\leq\frac{M}{2}(m_k+1)\norm{\tilde{c}}_\infty\underset{i\in\{0,\dots,m_k\}}{\max}\,{\norm{x^{(k-m_k+i)}-x_*}_2}^2.
\end{align*}
For the second term, we use again \eqref{eq:projector-linear-projector-difference} to obtain
\begin{align*}
\norm{x_*+P_{T_{x_*}\Sigma}(\tilde{x}^{(k+1)}-x_*)-P_\Sigma(\tilde{x}^{(k+1)})}_2&\leq\frac{M}{2}{\norm{\tilde{x}^{(k+1)}-x_*}_2}^2\\
&\leq\frac{M}{2}(m_k+1)^2\norm{\tilde{c}}_\infty^2\underset{i\in\{0,\dots,m_k\}}{\max}\,{\norm{x^{(k-m_k+i)}-x_*}_2}^2.
\end{align*}
Adding these two estimates, using Assumption \ref{assumption 2} and bound \eqref{C'}, we find
\begin{align*}
\norm{\tilde{x}^{(k+1)}-P_\Sigma(\tilde{x}^{(k+1)})}_2 &\leq\frac{M}{2}C'_{m_k}(1+t^{-2m_k}) \underset{i\in\{0,\dots,m_k\}}{\max}\,{\norm{x^{(k-m_k+i)}-x_*}_2}^2\\
&\leq\frac{M}{2\sigma^2} C'_{m_k}(1+t^{-2m_k})\underset{i\in\{0,\dots,m_k\}}{\max}\,{\norm{r^{(k-m_k+i)}}_2}^2.
\end{align*}
Finally, using Assumption \ref{assumption 1} and combining the above estimate with inequalities \eqref{mean value inequalities} and \eqref{common estimate}, we get
\begin{align*}
\norm{r^{(k+1)}}_2&\leq\norm{f(g(P_\Sigma(\tilde{x}^{(k+1)})))}_2+2\,\norm{\mathrm{D}f(x_*)\circ\mathrm{D}g(x_*)}_2\norm{\tilde{x}^{(k+1)}-P_\Sigma(\tilde{x}^{(k+1)})}_2\\
&\leq K\,\norm{f(P_\Sigma(\tilde{x}^{(k+1)}))}_2+2\,\norm{\mathrm{D}f(x_*)\circ\mathrm{D}g(x_*)}_2\norm{\tilde{x}^{(k+1)}-P_\Sigma(\tilde{x}^{(k+1)})}_2\\
&\leq K\,\norm{\tilde{r}^{(k+1)}}_2+2\left(K\,\norm{\mathrm{D}f(x_*)}_2+\norm{\mathrm{D}f(x_*)\circ \mathrm{D}g(x_*)}_2\right)\norm{\tilde{x}^{(k+1)}-P_\Sigma(\tilde{x}^{(m+1)})}_2\\
&\leq K\,\mathrm{dist}_2(0,\mathcal{A}^{(k)}_{m_k})+\kappa(1+t^{-2m_k})\underset{i\in\{0,\dots,m_k\}}{\max}\,{\norm{r^{(k-m_k+i)}}_2}^2,
\end{align*}
where
$\kappa=\frac{1}{\sigma^2}\left(\frac{KL}{2} + KM\,\norm{\mathrm{D}f(x_*)}_2+M\norm{\mathrm{D}f(x_*)\circ \mathrm{D}g(x_*)}_2\right)\underset{m\in\{1,\dots,p\}}\max C'_m$.
 
This concludes the proof of the Lemma for version P.

\item \textbf{Version A.} In this version, the set $\Sigma$ coincides with $\mathbb{R}^n$ and one takes $x^{(k+1)}=\sum_{i=0}^{m_k}\tilde{c}_i g(x^{(k-m_k+i)})$. A bound on the norm of $x^{(k+1)}-g(\tilde{x}^{(k+1)})$ must be obtained, justifying the assumption that $g$ is of class $\mathscr{C}^2$.

We proceed as for the estimate obtained in the first step, the only difference being the use of \eqref{eq:g_C2_estimate} instead of \eqref{eq:f_C2_estimate}, ending up getting something very similar, namely
\begin{align*}
\norm{x^{(k+1)}-g(\tilde{x}^{(k+1)})}_2 &\leq\frac{L'}{2}(m_k+1)\norm{\tilde{c}}_\infty((m_k+1)\norm{\tilde{c}}_\infty+1)\underset{i\in\{0,\dots,m_k\}}{\max}\,{\norm{x^{(k-m_k+i)}-x_*}_2}^2\\
&\leq\frac{L'}{2\sigma^2}C'_{m_k}(1+t^{-2m_k})\underset{i\in\{0,\dots,m_k\}}{\max}\,{\norm{r^{(k-m_k+i)}}_2}^2.
\end{align*}
Then, using Assumption \ref{assumption 1} and combining the above estimate with inequalities \eqref{mean value inequalities} and \eqref{common estimate} yields
\begin{align*}
\norm{r^{(k+1)}}_2&\leq\norm{f(g(\tilde{x}^{(k+1)}))}_2+2\,\norm{\mathrm{D}f(x_*)}_2\norm{x^{(k+1)}-g(\tilde{x}^{(k+1)})}_2\\
&\leq K\,\norm{\tilde{r}^{(k+1)}}_2+2\,\norm{\mathrm{D}f(x_*)}_2\norm{{x}^{(k+1)}-g(\tilde{x}^{(k+1)})}_2\\
&\leq K\,\mathrm{dist}_2(0,\mathcal{A}^{(k)}_{m_k})+\kappa(1+t^{-2m_k})\underset{i\in\{0,\dots,m_k\}}{\max}\,{\norm{r^{(k-m_k+i)}}_2}^2,
\end{align*}
where
$\kappa=\frac{1}{\sigma^2}\left(\frac{KL}{2} + L'\,\norm{\mathrm{D}f(x_*)}_2\right)\underset{m\in\{1,\dots,p\}}\max C'_m$.

The bound being obtained for version A, the proof is ended.
\end{itemize}
\end{proof}

\subsubsection{Proof of Theorem~\ref{thm:linear-convergence-DIIS-with-restarts}}
For any natural integer $k$, we consider the following properties
\begin{equation}\label{eq:restartRecurA}
\tag{$a_{k}$}
\text{For any $i$ in }\{0,\dots,m_k\},\ x^{(k-m_k+i)}\text{ exists and lies in }U\cap\Sigma\setminus\{x_*\},\text{ and }\norm{r^{(k-m_{k})}}_{2}\leq R_\mu\tau^{2p}.
\end{equation}
\begin{equation}\label{eq:restartRecurB}
\tag{$b_{k}$}
\text{If }m_{k}\geq 1,\text{ then, }\forall i\in\{1,\dots,m_{k}\},\ \norm{r^{(k-m_{k}+i)}}_{2}\leq\mu^i\norm{r^{(k-m_{k})}}_{2}.
\end{equation}
\begin{equation}\label{eq:restartRecurC}
\tag{$c_{k}$}
\text{If }m_{k}\geq 1,\text{ then, }\forall\ell\in\{1,\dots,m_{k}\},\ d_{\ell}^{(k)}\geq\tau(1-\mu)\norm{r^{(k-m_k)}}_{2}.
\end{equation}
Let us denote by $(\mathcal{P}_{k})$ the set of properties \eqref{eq:restartRecurA}, \eqref{eq:restartRecurB}, and \eqref{eq:restartRecurC} at rank $k$. Obviously, $(\mathcal{P}_{0})$ holds, since $m_0=0$ and $\norm{r^{(0)}}_2\leq R_\mu\tau^{2p}$. We are going to prove by induction that $(\mathcal{P}_{k})$ is true as long as $r^{(k)}$ is nonzero, if $R_\mu$ is chosen small enough. Concurrently, bounds \eqref{bound on m_k}, \eqref{bound on the extrapolation coefficients} and \eqref{r-linear convergence with respect to restarts} will be proved as consequences of $(\mathcal{P}_{k})$.

\smallskip

Let us then assume that $(\mathcal{P}_{k})$ holds for some natural integer $k$. In the case $m_k=0$ the proof of $(\mathcal{P}_{k+1})$ is very easy, so we only give detailed arguments when $m_k\geq 1$. The error vectors $r^{(k-m_k)},\dots,r^{(k)}$ then satisfy condition \eqref{lower bound on distance} in Lemma \ref{lem:induction step} for $t=\tau(1-\mu)$. They are affinely independent and bound \eqref{bound on m_k} is satisfied, as well as \eqref{bound on the extrapolation coefficients}, which is simply bound \eqref{bound on the extrapolation coefficients, general setting} in Lemma \ref{lem:induction step}.

\noindent
Moreover, Assumptions \ref{assumption 1} and \ref{assumption 2} together with properties \eqref{eq:restartRecurA} and \eqref{eq:restartRecurB} imply the bounds $\norm{x^{(k-m_{k}+i)}-x_*}_{2}\leq \sigma^{-1} R_\mu$ and $\norm{g(x^{(k-m_{k}+i)})-x_*}_{2} \leq \sigma^{-1} R_\mu K$ for any integer $i$ in $\{0,\dots,m_{k}\}$. Combining them with bound \eqref{bound on the extrapolation coefficients}, we see that, for $R_{\mu}$ small enough, $\tilde x^{(k+1)}$ belongs to $U$ as well as $g(\tilde{x}^{(k+1)})$ and $\sum_{i=0}^{m_k}\tilde{c}_i\,g(x^{(k-m_k+i)})$, so that $x^{(k+1)}$ and $r^{(k+1)}$ are well defined (with $x^{(k+1)}$ belonging to $U\cap\Sigma$), and estimates \eqref{lem:inductionRlast} and \eqref{lem:induction1KRlast} of Lemma~\ref{lem:induction step} hold.\medskip

\noindent
Now, using property \eqref{eq:restartRecurB}, bound \eqref{lem:inductionRlast} for $t=\tau(1-\mu)$ gives
\begin{align*}
\norm{r^{(k+1)}}_2&\leq K\,\mathrm{dist}_2(0,\mathcal{A}^{(k)}_{m_k})+\kappa\left(1+\frac{1}{(\tau(1-\mu))^{2m_k}}\right){\norm{r^{(k-m_k)}}_2}^2\\
&\leq \left(K\mu^{m_k}+\kappa\left(1+\frac{1}{(\tau(1-\mu))^{2p}}\right)\norm{r^{(k-m_k)}}_2\right)\norm{r^{(k-m_k)}}_2,
\end{align*}
and, to establish \eqref{r-linear convergence with respect to restarts}, we thus need the inequality
\[
\frac{K}{\mu}+\kappa\left(1+\frac{1}{(\tau(1-\mu))^{2p}}\right)\frac{R_\mu\tau^{2p}}{\mu^{m_k+1}}\leq 1.
\]
For this to be true, it suffices to choose the constant $R_\mu$ so that
\[
\frac{K}{\mu}+\kappa\left(1+\frac{1}{(1-\mu)^{2p}}\right)\frac{R_\mu}{\mu^{p+1}}\leq 1.
\]
Then, assuming that $r^{(k+1)}$ is nonzero, we distinguish two cases.
\begin{itemize}
\item If $m_{k+1}=0$ (meaning there is a restart at step $k$), then $k+1-m_{k+1}=k+1$ and, since we have previously shown that \eqref{r-linear convergence with respect to restarts} holds, it follows from property \eqref{eq:restartRecurA} that
\[
\norm{r^{(k+1-m_{k+1})}}_2\leq\mu^{m_k+1}\norm{r^{(k-m_k)}}_2\leq \mu^{m_k+1}R_\mu\tau^{2p}\leq R_\mu\tau^{2p},
\]
so the set of properties $(\mathcal{P}_{k+1})$ holds.\medskip

\item Otherwise, one has $k+1-m_{k+1}=k-m_k$. It then follows from property \eqref{eq:restartRecurA} that $\norm{r^{(k+1-m_{k+1})}}_2=\norm{r^{(k-m_k)}}_2\leq R_\mu\tau^{2p}$, so that ($a_{k+1}$) holds. Moreover, since there is no restart at step $k$, the condition
\[
\tau\norm{r^{(k+1)}-r^{(k-m_k)}}_2\leq\norm{(\id-\Pi_k)(r^{(k+1)}-r^{(k-m_k)})}_2=d^{(k)}_{m_k+1}
\]
is verified. We then have
\begin{align*}
d^{(k)}_{m_k+1}&\geq\tau\left(\norm{r^{(k-m_k)}}_2-\norm{r^{(k+1)}}_2\right)\\
&\geq\tau\left(\norm{r^{(k-m_k)}}_2-\mu^{m_k+1}\norm{r^{(k-m_k)}}_2\right)\\
&\geq\tau(1-\mu)\norm{r^{(k-m_k)}}_2.
\end{align*}
Using the same notations as in Lemma \ref{lem:induction step}, we also have $d^{(k+1)}_{i}=d^{(k)}_{i}$ for any integer $i$ in $\{1,\dots,m_k+1\}$. As a consequence, property $(c_{k+1})$ follows from property \eqref{eq:restartRecurC} and the above inequality. Finally, property \eqref{eq:restartRecurB} is equivalent to
\[
\forall j\in\{0,\dots,m_{k+1}-1\},\ \norm{r^{(k+1-m_{k+1}+j)}}_2\leq\mu^j\norm{r^{(k+1-m_{k+1})}}_2,
\]
and bound \eqref{r-linear convergence with respect to restarts} is equivalent to
\[
\norm{r^{(k+1-m_{k+1}+m_{k+1})}}_2\leq\mu^{m_{k+1}}\norm{r^{(k+1-m_{k+1})}}_2,
\]
so that property $(b_{k+1})$ holds, and so does ($\mathcal{P}_{k+1}$).
\end{itemize}
The set of properties ($\mathcal{P}_{k}$) is thus satisfied as long as $r^{(k)}$ is nonzero, and \eqref{bound on m_k}, \eqref{bound on the extrapolation coefficients}, and \eqref{r-linear convergence with respect to restarts} follow from it. The r-linear convergence property \eqref{r-linear convergence global} is next proved by complete induction.\\
For $k=0$, it is obvious. Assuming that for a given natural integer $k$, one has
\[
\forall i\in\{0,\dots,k\},\ \norm{r^{(i)}}_2\leq\mu^i\,\norm{r^{(0)}}_2
\]
and, using estimate \eqref{r-linear convergence with respect to restarts}, one finds that
\[
\norm{r^{(k+1)}}_2\leq\mu^{m_k+1}\,\norm{r^{(k-m_k)}}_2\leq\mu^{m_k+1}\mu^{k-m_k}\,\norm{r^{(0)}}_2=\mu^{k+1}\norm{r^{(0)}}_2,
\]
ending the proof of \eqref{r-linear convergence global}.

\noindent
Finally, if a restart occurs at step $k+1$, one has, employing the same notations as in Lemma \ref{lem:induction step},
\[
d^{(k)}_{m_k+1}<\tau\norm{r^{(k+1)}-r^{(k-m_k)}}_2.
\]
Inequality \eqref{estimate at restart} then follows from estimate \eqref{lem:induction1KRlast} in Lemma \ref{lem:induction step} and the triangle inequality, by setting
\[
\Gamma=\kappa\left(1+(1-\mu)^{-2p}\right).
\]

\subsubsection{Proof of Theorem~\ref{thm:linear-convergence-adaptive-depth}}
For any natural integer $k$, we consider the following properties.
\begin{multline}\label{eq:SlidingRecurA}\tag{$a_{k}$}
\text{For any $i$ in }\{0,\dots,k\},\ x^{(i)}\text{ exists and lies in }U\cap \Sigma\setminus\{x_*\}\\\text{ and, }\forall i\in\{0,\dots,k-1\},\ \norm{r^{(i+1)}}_{2}\leq\mu\norm{r^{(i)}}_{2}.
\end{multline}
\begin{equation}\label{eq:SlidingRecurB}\tag{$b_{k}$}
\text{If }m_{k}\geq 1,\text{ then, }\forall j\in\{1,\dots,m_k\},\ \forall i\in\{0,\dots,j-1\},\ \delta\norm{r^{(k-m_{k}+i)}}_{2}\leq \norm{r^{(k-m_{k}+j)}}_{2}.
\end{equation}
\begin{equation}\label{eq:SlidingRecurC}\tag{$c_{k}$}
\text{If }m_{k}\geq 1,\text{ then, }\forall\ell\in\{1,\dots,m_k\},\ d_{\ell}^{(k)}\geq\frac{1-\mu}{\mu}\norm{r^{(k-m_k+\ell)}}_{2}.
\end{equation}

Let us denote by $(\mathcal{P}_{k})$ the set of properties \eqref{eq:SlidingRecurA}, \eqref{eq:SlidingRecurB}, and \eqref{eq:SlidingRecurC} at rank $k$. Obviously, $(\mathcal{P}_{0})$ holds. Under the assumptions of Theorem \ref{thm:linear-convergence-adaptive-depth} and for $c_\mu$ sufficiently small, we will prove by induction that $(\mathcal{P}_{k})$ holds for all $k$ as long as $r^{(k)}$ is nonzero, and that this will imply the statements of the theorem. 

\noindent
Let us then assume that $(\mathcal{P}_{k})$ holds for some natural integer $k$.  In the case $m_k=0$ the proof of $(\mathcal{P}_{k+1})$ is very easy, so we only give detailed arguments when $m_k\geq 1$. From property \eqref{eq:SlidingRecurA}, one has
\begin{equation}\label{estim r}
\forall\ell\in\{0,\dots,m_k\},\ \max_{i\in\{\ell,\dots,m_k\}}\norm{r^{(k-m_k+i)}}_2=\norm{r^{(k-m_k+\ell)}}_2\leq c_\mu\,\delta^2,
\end{equation}
and property \eqref{eq:SlidingRecurC} implies that assumption~\eqref{lower bound on distance} in Lemma~\ref{lem:induction step} is satisfied with $t=\frac{1-\mu}{\mu}$. As a consequence, both \eqref{thm:sliding:enum:mk} and estimate~\eqref{bound on the extrapolation coefficients, general setting} hold, the latter taking the form
\[
\norm{c^{(k)}}_{\infty}\leq C_{m_{k}}\left(1+\left(\dfrac{\mu}{1-\mu}\right)^{m_{k}}\right),
\]
which is exactly estimate \eqref{thm:sliding:enum:ck}.

Next, using Assumptions \ref{assumption 1} and \ref{assumption 2}, the bounds
\[
\max_{i\in\{0,\dots,m_{k}\}}{\norm{x^{(k-m_{k}+i)}-x_*}_{2}}\leq \sigma^{-1} c_\mu K^2\text{ and }\max_{i\in\{0,\dots,m_{k}\}}{\norm{g(x^{(k-m_{k}+i)})-x_*}_{2}}\leq \sigma^{-1} c_\mu K^3
\]
follow from \eqref{estim r}. Combining them with \eqref{thm:sliding:enum:ck}, we see that, for $c_{\mu}$ small enough, $\tilde x^{(k+1)}$ belongs to $U$, as well as $g(\tilde{x}^{(k+1)})$ and $\sum_{i=0}^{m_k}\tilde{c}_i\,g(x^{(k-m_k+i)})$, so that $x^{(k+1)}$ and $r^{(k+1)}$ are both well defined, with $x^{(k+1)}$ belonging to $U\cap \Sigma$, and estimates \eqref{lem:inductionRlast} and \eqref{lem:induction1KRlast} of Lemma~\ref{lem:induction step} hold.

We now assume that $r^{(k+1)}$ is nonzero and impose the following additional smallness constraint on the constant $c_\mu$,
\begin{equation}\label{mu}
\kappa\left(1+\max\left\{ \left( \frac{\mu}{1-\mu} \right)^{2}, \left( \frac{\mu}{1-\mu} \right)^{2p} \right\}\right)c_{\mu} < 1-\frac{K}{\mu}\,.
\end{equation}
From property \eqref{eq:SlidingRecurB}, one has that $\norm{r^{(k-m_{k})}}_{2}\leq \delta^{-1}{\norm{r^{(k)}}_{2}}$, so that, using \eqref{estim r} (for $\ell=0$), we conclude that
\[
\max_{i\in\{0,\dots,m_{k}\}}{\norm{r^{(k-m_{k}+i)}}_{2}}^2={\norm{r^{(k-m_{k})}}_{2}}^2\leq c_\mu \delta{\norm{r^{(k)}}_{2}}\leq c_\mu K{\norm{r^{(k)}}_{2}}\,.
\]
Hence, using \eqref{lem:inductionRlast} and \eqref{mu}, we get
\begin{align*}
\norm{r^{(k+1)}}_{2}&\leq   K\,\mathrm{dist}_{2}(0,\mathcal{A}_{m_k}^{(k)})+\kappa(1+\max\{ t^{-2}, t^{-2p} \})\max_{i\in\{0,\dots,m_{k}\}}{\norm{r^{(k-m_{k}+i)}}_{2}}^{2}\\
&\leq K\left(1+\kappa(1+\max\{ t^{-2}, t^{-2p} \})c_{\mu}\right) \norm{r^{(k)}}_{2}\\
&\leq \mu  \norm{r^{(k)}}_{2}\,,
\end{align*}
which, together with property \eqref{eq:SlidingRecurA}, establishes that property $(a_{k+1})$ holds.

If $m_{k+1}=0$, there is nothing else to check, and $(\mathcal{P}_{k+1})$ holds. Otherwise, one has, by design of the algorithm,
\[
\forall i\in\{0,\dots,m_{k+1}\},\ \norm{r^{(k+1)}}_{2}\geq\delta\norm{r^{(k+1-m_{k+1}+i)}}.
\]
Using property \eqref{eq:SlidingRecurB}, property~$(b_{k+1})$ ensues. 
Now, from \eqref{eq:SlidingRecurB} and $(b_{k+1})$, one has
\[
\norm{r^{(k-m_{k})}}_{2}\leq\frac{1}{\delta}\norm{r^{(k+1-m_{k+1})}}_{2}\leq\frac{1}{\delta^{2}}\norm{r^{(k+1)}}_{2}.
\]
Combining this with \eqref{estim r} (for $\ell=0$), one finds
\[
\max_{i\in\{0,\dots,m_{k}\}}{\norm{r^{(k-m_{k}+i)}}_{2}}^2={\norm{r^{(k-m_{k})}}_{2}}^2\leq c_\mu{\norm{r^{(k+1)}}_{2}}.
\]
As a consequence, estimate~\eqref{lem:induction1KRlast} of Lemma~\ref{lem:induction step} gives 
\begin{equation}\label{ineq}
\norm{r^{(k+1)}}_{2}\leq \frac{K}{1-K}\,d^{(k)}_{m_k+1}+\frac{\kappa}{1-K}\left(1+\max\left\{ \left(\frac{\mu}{1-\mu} \right)^{2}, \left( \frac{\mu}{1-\mu} \right)^{2p} \right\}\right)c_{\mu}\,\norm{r^{(k+1)}}_{2}\,.
\end{equation}
But the constraint \eqref{mu} means exactly that
\[
\frac{K}{1-K}\frac{1-\mu}{\mu}+\frac{\kappa}{1-K}\left(1+\max\left\{ \left(\frac{\mu}{1-\mu} \right)^{2}, \left( \frac{\mu}{1-\mu} \right)^{2p} \right\}\right)c_{\mu}<1\,,
\]
so that $\norm{r^{(k+1)}}_{2}$ is non-positive if $d^{(k)}_{m_k+1}<\frac{1-\mu}{\mu}\norm{r^{(k+1)}}_{2}$, which is absurd. We have thus proved by contradiction that 
\[
d^{(k)}_{m_k+1}\geq\frac{1-\mu}{\mu}\norm{r^{(k+1)}}_{2}.
\]
Together with our assumption \eqref{eq:SlidingRecurC}, this implies the new property 
\begin{equation}\label{eq:newak}\tag{$\hat c_{k}$}
\forall\ell\in\{1,\dots,m_k+1\},\ d^{(k)}_{\ell}\geq\frac{1-\mu}{\mu}\norm{r^{(k-m_k+\ell)}}_{2}.
\end{equation}
Now, it is easily seen that $\mathcal{A}_{\ell-1}^{(k+1)}\subset\mathcal{A}^{(k)}_{\ell+m_k-m_{k+1}}$ for any integer $\ell$ in $\{1,\dots,m_{k+1}\}$, so that one has $d_{\ell}^{(k+1)}\geq d_{\ell+1+m_k-m_{k+1}}^{(k)}$. Property~\eqref{eq:newak} implying that
\[
\forall\ell\in\{1,\dots,m_{k+1}\},\ d_{\ell+1+m_k-m_{k+1}}^{(k)}\geq \frac{1-\mu}{\mu}\norm{r^{(k+1-m_{k+1}+\ell)}}_{2},
\]
property $(c_{k+1})$ holds, and so does $(\mathcal{P}_{k+1})$.

\noindent
We have thus proved that, for $c_\mu$ small enough, $(\mathcal{P}_{k})$ holds for any natural integer $k$ such that $r^{(k)}$ is nonzero, and so do statements \eqref{thm:sliding:enum:mk}, \eqref{thm:sliding:enum:ck}, and \eqref{thm:sliding:enum:rkmu}. It remains to prove statement~\eqref{thm:sliding:enum:rkdelta}.

Let $k$ be a natural integer such that $k-p\geq 1$ and $\norm{r^{(k)}}_2$ is nonzero. Since $m_{k}\leq p$, one has $k-p-1\leq k-m_k-1$, so that $\norm{r^{(k)}}_{2}\leq\delta\norm{r^{(k-m_{k}-1)}}_{2}\leq\delta\norm{r^{(k-p-1)}}_{2}$. This ends the proof.

\subsubsection{Proof of Theorem \ref{thm:superlinear restart}}
We set $\varepsilon(T_0,\zeta,\mu)= \min \left\{T_0^{-1/\zeta},(R_\mu T_0^{2p})^{\frac{1}{1-2p\zeta}},\left(\frac{1-K}{2KT_0}\right)^{1/\zeta}\right\}$ and $k_0=0$. Considering a natural integer $k_i$ such that $x^{(k_i)}$ is well-defined, $m_{k_i}=0$ and $\norm{r^{(k_i)}}_2\leq \mu^{k_i}\norm{r^{(0)}}_2$, we will prove the existence of a strictly larger natural integer $k_{i+1}$ such that $x^{(k_{i+1})}$ is well-defined, $m_{k_{i+1}}=0$, $m_{k}=k-k_i$ for any integer $k$ in $\{k_i,\dots,k_{i+1}-1\}$, and $\norm{r^{(k)}}_2\leq \mu^{k-k_i} \norm{r^{(k_i)}}_2$ for any integer $k$ in $\{k_i,\dots,k_{i+1}\}$.

We may suppose that $r^{(k_i)}$ is nonzero (otherwise, one would have $x^{(k_i)}=x^{(k_\text{stop})}$ and thus $k_{i+1}=k_i+1$). In such a case, by design of Algorithm \ref{alg:cdiis-restart-variant}', the parameter $\tau$ in the boolean test at step $k$ remains equal to $\tau^{(k_i)}=T_0\norm{r^{(k_i)}}_2^{\zeta}$ for any integer $k$ greater than or equal to $k_i$ such that $x^{(k)}$ is well-defined and $m_{k}=k-k_i$. This means that, in this range of steps, the modified algorithm is equivalent to the original one, with fixed $\tau$ equal to $\tau^{(k_i)}$ and initial point equal to $x^{(k_i)}$. Since $0<\norm{r^{(k_i)}}_2\leq \norm{r^{(k_0)}}_2< T_0^{-1/\zeta}$, one has $0<\tau^{(k_i)}<1$. Moreover, by definition of $\tau^{(k_i)}$ and since $\norm{r^{(k_0)}}_2 \leq (R_\mu T_0^{2p})^{\frac{1}{1-2p\zeta}}$, one has $\norm{r^{(k_i)}}_2 \leq R_\mu (T_0\norm{r^{(k_i)}}_2^{\zeta})^{2p} = R_\mu (\tau^{(k_i)})^{2p}$, and Theorem \ref{thm:linear-convergence-DIIS-with-restarts} applies. As a consequence, the integer $k_{i+1}$ exists and is such that $k_{i+1}\leq k_i+p$ and $\norm{r^{(k_{i+1})}}_2\leq\mu^{k_{i+1}-k_i}\norm{r^{(k_i)}}_2\leq\mu^{k_{i+1}}\norm{r^{(0)}}_2$.

\medskip

We have thus shown that the sequence $(x^{(k)})_{k\in\mathbb{N}}$ is well-defined and converges at least r-linearly to $x^*$, and that the family of steps at which the method restarts forms an infinite sequence $(k_i)_{i\in\mathbb{N}}$ such that the difference between two consecutive terms is at most equal to $p$. This allows to say more about the convergence of the sequence of approximations. Indeed, if $\norm{r^{(k_{i+1})}}_2>0$, then estimate \eqref{estimate at restart} holds with $k+1=k_{i+1}$, $m_k=k_{i+1}-k_i-1$ and $\tau=\tau^{(k_i)}$. Moreover, since it is assumed that $\norm{r^{(0)}}_2\leq\left(\frac{1-K}{2KT_0}\right)^{1/\zeta}$, it follows that $1-K(1+\tau^{(k_i)})\geq\frac{1-K}{2}$ for any natural integer $i$. Hence, using the formula for $\tau^{(k_i)}$, one gets from \eqref{estimate at restart} the estimate
\[
\frac{1-K}{2}\norm{r^{(k_{i+1})}}_2\leq \left( K\tau^{(k_i)}+\Gamma\, T_0^{-\frac{1}{\zeta}}(\tau^{(k_i)})^{\frac{1}{\zeta}-2p}\right) \norm{r^{(k_i)}}_2\leq \left( K+\Gamma\,T_0^{-\frac{1}{\zeta}}\right)(\tau^{(k_i)})^{\min\{1,\frac{1}{\zeta}-2p\}} \norm{r^{(k_i)}}_2,
\]
from which the inequality
\[
\norm{r^{(k_{i+1})}}_2\leq B\,{\norm{r^{(k_i)}}_2}^{\Theta}
\]
follows, with $B=\frac{2}{1-K}\left(K+\Gamma\,T_0^{-\frac{1}{\zeta}}\right)T_0^{\frac{\Theta-1}{\zeta}}$ and $\Theta=1+\min\{\zeta,1-2p\zeta\}$.

Next, let $i_0$ be an integer such that $B^{\frac{1}{\Theta-1}}\mu^{k_{i_0}}\norm{r^{(0)}}_2 <\frac{1}{2}$. Since $\norm{r^{(k_{i_0})}}_2\leq \mu^{k_{i_0}}\norm{r^{(0)}}_2$ and $\norm{r^{(k_{i+1})}}_2\leq B\,{\norm{r^{(k_i)}}_2}^{\Theta}$, one can prove by induction that, for any integer $i$ greater than or equal to $i_0$, $\norm{r^{(k_{i})}}_2\leq B^{-\frac{1}{\Theta-1}} 2^{-\Theta^{i-i_0}}$ and, using that $k_i-k_{i_0}\leq (i-i_0)\,p$, $\norm{r^{(k_{i})}}_2\leq B^{-\frac{1}{\Theta-1}} 2^{-\Theta^{(k_i-k_{i_0})/p}}$.

\medskip

Finally, for any integer $k$ greater than or equal to $k_{i_0}$, let the integer $i(k)$ be such that $k-m_k=k_{i(k)}$. This implies that $i(k)\geq i_0$ and $k_{i(k)}\geq k-p$, so that
\[
\norm{r^{(k)}}_2\leq\norm{r^{(k_{i(k)})}}_2\leq B^{-\frac{1}{\Theta-1}} 2^{-\Theta^{(k_{i(k)}-k_{i_0})/p}}\leq b\,\eta^{\theta^k},
\]
where $\eta=2^{-\theta^{-p-k_{i_0}}}$, $b=B^{-\frac{1}{\Theta-1}}$, and $\theta=\Theta^{1/p}=\left(1+\min\{\zeta,1-2p\zeta\}\right)^{1/p}$ is the desired r-order of superlinear convergence.

\begin{rmrk}\label{remark on growth of coefficients}
In Algorithm \ref{alg:cdiis-restart-variant}', the sequence $(\tau^{(k-m_k)})_{k\in\mathbb{N}}$ tends to $0$ and estimate \eqref{bound on the extrapolation coefficients} on the extrapolation coefficients is no longer uniform: we can only say that $\norm{c^{(k)}}_\infty = O\left({\norm{r^{(k-m_k)}}_2}^{-\zeta m_k}\right)$.
\end{rmrk}

\subsubsection{Proof of Theorem \ref{thm:superlinear adaptive}}
The proof follows an induction argument very similar to the one in the proof of Theorem \ref{thm:linear-convergence-adaptive-depth}, but with some novelties, since the value of the parameter $\delta$ is no longer fixed along the iteration in Algorithm \ref{alg:cdiis-adaptive-depth-variant}'.

The set of properties at rank $k$, denoted $(\mathcal{P}_{k})$, is almost unchanged, the only modification being the replacement of $\delta$ by $D_0\,{\norm{r^{(k-m_k+i)}}_2}^{\xi}$ in the second property:
\begin{equation}\label{eq:SlidingRecurB'}\tag{$b_{k}$}
\text{if }m_{k}\geq 1,\text{ then, }\forall j\in\{1,\dots,m_k\},\ \forall i\in\{0,\dots,j-1\},\ D_0\,{\norm{r^{(k-m_k+i)}}_2}^{1+\xi}\leq\norm{r^{(k-m_{k}+j)}}_{2}.
\end{equation}
Assume that $(\mathcal{P}_{k})$ holds for some natural integer $k$, with $m_k\geq 1$ (the case $m_k=0$ being immediate). From properties \eqref{eq:SlidingRecurA} and \eqref{eq:SlidingRecurC}, one infers that assumption~\eqref{lower bound on distance} in Lemma~\ref{lem:induction step} is satisfied with $t=\frac{1-\mu}{\mu}$, so that both \eqref{thm:sliding:enum:mk} and \eqref{thm:sliding:enum:ck} hold.

Moreover, the bounds
\[
\forall i\in\{0,\dots,m_{k}\},\ \norm{x^{(k-m_{k}+i)}-x_*}_{2}\leq\sigma^{-1} \varepsilon(D_0,\xi,\mu)\text{ and }\norm{g(x^{(k-m_{k}+i)})-x_*}_{2}\leq \sigma^{-1} K\varepsilon(D_0,\xi,\mu)
\]
follow from using property \eqref{eq:SlidingRecurA} and the use of Assumptions \ref{assumption 1} and \ref{assumption 2}. As a consequence, for $\varepsilon(D_0,\xi,\mu)$ small enough, the linear combination $\tilde x^{(k+1)}$ belongs to $U$, and so do $g(\tilde{x}^{(k+1)})$ and $\sum_{i=0}^{m_k}\tilde{c}_i\,g(x^{(k-m_k+i)})$, resulting in both $x^{(k+1)}$ and $r^{(k+1)}$ being well-defined (with $x^{(k+1)}$ belonging to $U\cap \Sigma$) and estimates \eqref{lem:inductionRlast} and \eqref{lem:induction1KRlast} of Lemma~\ref{lem:induction step} being valid.

Let us now assume that $r^{(k+1)}$ is nonzero. Property \eqref{eq:SlidingRecurB'} implies in particular that
${\norm{r^{(k-m_{k})}}_{2}}^{1+\xi}\leq {D_0}^{-1}{\norm{r^{(k)}}_{2}}$. This, together with property \eqref{eq:SlidingRecurA} and the fact that $\norm{r^{(k-m_{k})}}_{2}<\varepsilon(D_0,\xi,\mu)$ by assumption, gives
\[
\max_{i\in\{0,\dots,m_{k}\}}{\norm{r^{(k-m_{k}+i)}}_{2}}^2={\norm{r^{(k-m_{k})}}_{2}}^2\leq {D_0}^{-1}(\varepsilon(D_0,\xi,\mu))^{1-\xi}\norm{r^{(k)}}_{2}.
\]
Hence, using estimate \eqref{lem:inductionRlast}, we get
\begin{align*}
\norm{r^{(k+1)}}_{2}&\leq   K\,\mathrm{dist}_{2}(0,\mathcal{A}_{k}^{(k)})+\kappa(1+\max\{ t^{-2}, t^{-2p} \})\max_{i\in\{0,\dots,m_{k}\}}{\norm{r^{(k-m_{k}+i)}}_{2}}^{2}\\
&\leq \left(K+\kappa(1+\max\{ t^{-2}, t^{-2p} \}){D_0}^{-1}(\varepsilon(D_0,\xi,\mu))^{1-\xi}\right)\norm{r^{(k)}}_{2}.\label{pour superlin1}
\end{align*}
Since $K<\mu$, we may impose that
\begin{equation}\label{control}
\left(K+\kappa(1+\max\{ t^{-2}, t^{-2p} \}){D_0}^{-1}(\varepsilon(D_0,\xi,\mu))^{1-\xi}\right)\leq \mu,
\end{equation}
which can be achieved by taking $\varepsilon(D_0,\xi,\mu)$ small enough, since $\xi$ belongs to $(0,\sqrt{2}-1]$. Thus, there holds $\norm{r^{(k+1)}}_{2}\leq\mu\,\norm{r^{(k)}}_{2}$, which, combined with property \eqref{eq:SlidingRecurA}, establishes property $(a_{k+1})$.

If $m_{k+1}=0$, there is nothing else to check, so $(\mathcal{P}_{k+1})$ is true. If it is not the case, one has, by design of Algorithm \ref{alg:cdiis-adaptive-depth-variant}',
\[
\forall i\in\{0,\dots,m_{k+1}\},\ \norm{r^{(k+1)}}_{2}\geq D_0\,{\norm{r^{(k+1-m_{k+1}+i)}}_2}^{1+\xi}.
\]
Using property \eqref{eq:SlidingRecurB'}, property~$(b_{k+1})$ ensues, 
leading to
\[
\norm{r^{(k-m_k)}}_{2}\leq {D_0}^{-\frac{2+\xi}{(1+\xi)^2}}{\norm{r^{(k+1)}}_{2}}^{\frac{1}{(1+\xi)^2}}.
\]
As a consequence, estimate~\eqref{lem:induction1KRlast} of Lemma~\ref{lem:induction step} gives 
\begin{equation}\label{ineq'}
\norm{r^{(k+1)}}_{2}\leq \frac{K}{1-K}\,d^{(k)}_{m_k+1}+\frac{\kappa}{1-K}\left(1+\max\left\{ \left(\frac{\mu}{1-\mu} \right)^{2}, \left( \frac{\mu}{1-\mu} \right)^{2p} \right\}\right){D_0}^{-\frac{2(2+\xi)}{(1+\xi)^2}}{\norm{r^{(k+1)}}_{2}}^{\frac{2}{(1+\xi)^2}}.
\end{equation}
We now impose the condition
\[
\frac{K}{1-K}\frac{1-\mu}{\mu}+\frac{\kappa}{1-K}\left(1+\max\left\{ \left(\frac{\mu}{1-\mu} \right)^{2}, \left( \frac{\mu}{1-\mu} \right)^{2p} \right\}\right){D_0}^{-\frac{2(2+\xi)}{(1+\xi)^2}}{\norm{r^{(k+1)}}_{2}}^{\frac{2}{(1+\xi)^2}-1}<1\,,
\]
satisfied by taking $\varepsilon(D_0,\xi,\mu)$ small enough when $\xi$ belongs to $(0,\sqrt{2}-1)$, or $\Delta_\mu$ large enough when $\xi$ is equal to $\sqrt{2}-1$. Inequality \eqref{ineq'} then implies that $\norm{r^{(k+1)}}_{2}$ is non-positive if $d^{(k)}_{m_k+1}<\frac{1-\mu}{\mu}\norm{r^{(k+1)}}_{2}$, which is absurd. We have thus proved by contradiction that 
\[
d^{(k)}_{m_k+1}\geq\frac{1-\mu}{\mu}\norm{r^{(k+1)}}_{2}.
\]
The rest of the induction argument is exactly the same as in the proof of Theorem \ref{thm:linear-convergence-adaptive-depth}, with $(\mathcal{P}_{k})$ holding for any natural integer $k$ such that $r^{(k)}$ is nonzero, and implying estimates \eqref{thm:sliding:enum:mk}, \eqref{thm:sliding:enum:ck} and \eqref{thm:sliding:enum:rkmu}.

\medskip

It remains to establish that the convergence is superlinear. If $k\geq p+1$, then $k-m_k\geq k-p\geq 1$, so that, by property \eqref{eq:SlidingRecurA}, one has
\[
\norm{r^{(k)}}_{2}\leq D_0\,{\norm{r^{(k-m_{k}-1)}}_{2}}^{1+\xi}\leq D_0\,{\norm{r^{(k-p-1)}}_{2}}^{1+\xi}.
\]
It also follows from property \eqref{eq:SlidingRecurA} that there exists a natural integer $k_0$ such that
\[
\forall i\in\{0,\cdots,p\},\ {D_0}^{1/\xi}\norm{r^{(k_0+i)}}_2\leq\frac{1}{2}.
\]
As a consequence, for any integer $k$ greater or equal to $k_0+p+1$, writing $k-k_0=(p+1)q+i$ with $i$ in $\{0,\dots,p\}$ we get, reasoning by induction on $q$, that
\[
\norm{r^{(k)}}_2\leq{D_0}^{-\frac{1}{\xi}}2^{-(1+\xi)^q}\leq b\,\eta^{\theta^k},
\]
where $b=D_0^{-\frac{1}{\xi}}$, $\eta=2^{-(1+\xi)^{-\frac{k_0+p}{p+1}}}<1$ and $\theta=(1+\xi)^{\frac{1}{p+1}}$. The proof is complete. 

\section{Application to electronic ground state calculations}\label{electronic ground state calculation}
We now explain how the Anderson--Pulay acceleration analysed in the previous section can be applied to the computation of the electronic ground state of a molecular system through two of the most commonly used approximations of the quantum many-body problem in non-relativistic quantum chemistry.

\smallskip

Consider an isolated molecular system composed of $M$ atomic nuclei and $N$ electrons. Within the setting of the Born--Oppenheimer approximation, the motion of atomic nuclei and electrons can be separated and the nuclei are classical point-like particles with fixed positions. The state of the electrons is then entirely described by a wavefunction $\psi$ valued in $\mathbb{C}$, only depending on the time variable and on the respective positions and spins of the electrons. The ground state of the molecular system is determined by solving a minimisation problem, which one may try to attack ``directly'' by working in a finite basis. Unfortunately, due to the high dimension of the position space $\mathbb{R}^{3N}$, this approach is intractable for systems with more than a few electrons and one has to resort to other types of approximations.

\smallskip

On the one hand, the \textit{ab initio} \emph{Hartree--Fock method} \cite{Hartree:1928,Fock:1930} for the computation of the ground-state electronic wavefunction $\psi$ consists in restricting the functional space in the minimisation problem to the set of so-called \emph{Slater determinants}, which are antisymmetrised products of $N$ \emph{monoelectronic wavefunctions} (also called \emph{molecular (or atomic) orbitals}). While this restriction only provides with an upper bound of the exact energy, the main advantage of this approximation is that the problem to be solved remains variational. There is however a price to pay in a loss of the correlation between the positions of the electrons, as an electron will evolve independently of the way the others do in this model.

On the other hand, the \emph{density functional theory} (DFT) of Hohenberg and Kohn \cite{Hohenberg:1964} follows a completely different approach and aims at including the electronic correlation missing in the Hartree--Fock method. It is based on the fact that the ground-state properties of a many-electron system are uniquely determined by an \emph{electronic density}, which only depends on the three spatial coordinates. Indeed, following arguments by Hohenberg and Kohn, the energy functional defined in terms of the unknown wavefunction $\psi$ can be replaced by one for the unknown density function $\rho$. Since such a functional is not known explicitly for a system of $N$ interacting electrons, suitable approximations are needed to make the DFT a practical tool for computing electronic ground states. These rely on exact (or very accurate) evaluations of the density functional of a reference system ``close'' to the real one. In the semi-empirical approach of Kohn and Sham \cite{Kohn:1965}, the chosen reference system consists of $N$ \emph{non-}interacting electrons (the associated wavefunction being a single Slater determinant). The exchange-correlation part of the functional must then be approximated, using models like the local-density approximation (LDA) or the generalised gradient approximation (GGA).

\smallskip

While based on different physical principles, the discrete problems associated with these two approximations both take the form of a nonlinear generalised eigenvalue problem. Omitting the spin variables for the sake of simplicity, the monoelectronic wavefunctions are linearly expanded on a given finite basis set\footnote{Due to computational complexity considerations, the basis functions are typically Slater-type or Gaussian-type orbitals, and generally form a non-orthonormal set.} (this is the so-called \emph{LCAO approximation}, LCAO being the acronym to \emph{linear combination of atomic orbitals}), spanning a vector space of finite dimension~$d$.

In this setting, the spinless Hartree--Fock method leads to the so-called \emph{Roothaan--Hall equations} \cite{Roothaan:1951,Hall:1951},
\[
\begin{cases}
F^{\text{HF}}(D)C=SC\Lambda,\\
C^{*}SC=I_N,\\
D=CC^{*},
\end{cases}
\]
in which the matrix $F^{\text{HF}}(D)$ denotes, with a slight abuse of notation, the \emph{Fock matrix} associated with the rectangular matrix $C$ of orbital coefficients, the square matrix $D$ is the \emph{discrete density matrix}, the matrix $S$ is the so-called \emph{overlap} matrix, that is the Gram matrix associated with the basis set, the Hermitian matrix $\Lambda$, which is by convention chosen diagonal, stands for the Lagrange multipliers attached to an orthonormality constraints on the orbitals, and $I_{N}$ is the identity matrix of order $N$. A necessary condition for a matrix $D_*$ to be a ``solution'' of the above system in the submanifold of pure state density matrices of rank $N$,
\begin{equation}\label{PN}
\mathcal{P}_{N}=\left\{D\in\mathcal{M}_{d,d}(\mathbb{C}),\ D^{*}=D,\ DSD=D,\ \mathrm{tr}(SD)=N\right\},
\end{equation}
is that it commutes with the associated Fock matrix $F^{\text{HF}}(D_*)$ in the sense that
\[
[F^{\text{HF}}(D_*),D_*]=0,
\]
where the \emph{``commutator''} $[A,B]$ between two matrices $A$ and $B$ is defined by $[A,B]=ABS-SBA$.

The discretisation of other Hartree--Fock models or of the Kohn--Sham models can be dealt with in the same manner. For the spinless Kohn-Sham model, assuming differentiability for the approximated exchange-correlation functional used in practice, the Roothaan--Hall equations read
\[
\begin{cases}
F^{\text{KS}}(D)C=SC\Lambda,\\
C^{*}SC=I_N,\\
D=CC^{*}.
\end{cases}
\]

\smallskip The Roothaan--Hall equations are usually solved \emph{``self-consistently''}, that is using an iterative fixed-point procedure, the most simple and ``natural'' approach being the algorithm introduced by Roothaan \cite{Roothaan:1951}. Given the choice of an initial discrete density matrix $D^{(0)}$, it consists in generating a sequence of matrices $(D^{(k)})_{k\in\mathbb{N}}$ defined by
\[
\forall k\in\mathbb{N},\ \begin{cases}
F(D^{(k)})C^{(k+1)}=SC^{(k+1)}E^{(k+1)},\\
(C^{(k+1)})^{*}SC^{(k+1)}=I_N,\\
D^{(k+1)}=C^{(k+1)}(C^{(k+1)})^{*},
\end{cases}
\]
where $E^{(k+1)}$ is a diagonal matrix such that $(E^{(k+1)})_{ii}=\varepsilon_i^{(k+1)}$, $i=1,\cdots,N$, the scalars $\varepsilon_1^{(k+1)}\leq\varepsilon_2^{(k+1)}\leq\dots\leq\varepsilon_{N}^{(k+1)}$ being the $N$ smallest eigenvalues, counted with multiplicity, of the \emph{linear} generalised eigenproblem
\[
F(D^{(k)})V=\varepsilon\,SV,
\]
and the columns of the matrix $C^{(k+1)}$ are associated orthonormal (with respect to the scalar product induced by the matrix $S$) eigenvectors. The procedure of assembling $D^{(k+1)}$ by populating the molecular orbitals starting with those of lowest energy of the current Fock matrix is called the \emph{Aufbau principle}. 

Assuming the \emph{uniform well-posedness property} introduced in \cite{Cances:2000a}, the matrix $D^{(k+1)}$ is uniquely defined at each step of the procedure and can be characterised as the minimiser of a variational problem, that is
\[
D^{(k+1)}=\arg\inf\left\{\mathrm{tr}\left(F(D^{(k)})D\right),\ D\in\mathcal{P}_{N}\right\}=g(D^{(k)}),
\]
thus defining a fixed-point iteration process. In practice, a convergence criterion is used to end the iterations. For instance, one may compute the norm of the difference of two successive density matrices at each step and compare it to a prescribed tolerance. Another possibility is to use the norm of the commutator between the current density matrix and its associated Fock matrix.

\smallskip

Both of these choices may be employed to accelerate the convergence of the above self-consistent field (SCF) algorithm. Indeed, the first one corresponds to error vectors proposed in conjunction with the original form of the DIIS \cite{Pulay:1980}, that is $r^{(k)}=D^{(k)}-D^{(k-1)}$, while the second leads to those used in the CDIIS \cite{Pulay:1982}, that is $r^{(k)}=\left[F(D^{(k)}),D^{(k)}\right]$. The latter is widely used in quantum chemistry softwares for electronic structure calculations, notably because of its simplicity with respect to implementation and its usually rapid, but not assured, convergence. Let us describe it in more detail.

Starting from a guess density matrix $D^{(0)}$, one first sets $\widetilde{D}^{(0)}=D^{(0)}$. After $k$ steps of the method, given a set of $m_k+1$ previous density matrices $D^{(k-m_k)},\dots,D^{(k)}$, one assembles the \emph{pseudo-}density matrix\footnote{This denomination stems from the fact that such a construction does not enforce the idempotency property on $\widetilde{D}^{(k+1)}$.} $\widetilde{D}^{(k+1)}$ as
\[
\widetilde{D}^{(k+1)}=\sum_{i=0}^{m_k}c^{(k)}_iD^{(k-m_k+i)},
\]
where
\[
\vec{c}^{(k)}=\underset{\substack{(c_0,\dots,c_{m_k})\in\mathbb{R}^{m_k+1}\\\sum_{i=0}^{m_k}c_i=1}}{\arg\min}\,\left\|\sum_{i=0}^{m_k}c_i\left[F\left(D^{(k-m_k+i)}\right),D^{(k-m_k+i)}\right]\right\|_2,
\]
the matrix norm $\norm{\cdot}_2$ being the Frobenius norm. The next density matrix $D^{(k+1)}$, is then obtained by applying the Aufbau principle to $\widetilde{D}^{(k+1)}$, that is, by diagonalising the Fock matrix\footnote{For Hartree--Fock models, the pseudo-Fock matrix $\sum_{i=0}^{m_k}c^{(k)}_i\,F(D^{(k-m_k+i)})$ may as well be considered, since it holds that $F(\sum_{i=0}^{m_k}c^{(k)}_iD^{(k-m_k+i)})=\sum_{i=0}^{m_k}c^{(k)}_i\,F(D^{(k-m_k+i)})$ due to constraint \eqref{DIIS constraint} being satisfied by the coefficients $c^{(k)}_i$, $i=0,\dots,m_k$.}  associated with $\widetilde{D}^{(k+1)}$ and forming $D^{(k+1)}$ from the $N$ eigenvectors associated with its $N$ smallest eigenvalues. This process is then repeated until the numerical convergence criterion is satisfied. Modifications of the procedure have been proposed in order to make it more robust, either by replacing the constraint on the coefficients, like in the C$^2$-DIIS \cite{Sellers:1993}, or by obtaining them by minimisation of an associated energy, like in the EDIIS\footnote{The difference in this method is that the coefficients of the linear expansion are such that \[\vec{c}^{(k)}=\underset{\sum_{i=0}^{m_k}c_i=1,\,c_i\in[0,1]}{\arg\min}\,E\left(\sum_{i=0}^{m_k}c_iD^{(k-m_k+i)}\right),\] where $E$ is the energy functional of the model under consideration.} \cite{Kudin:2002}, the ADIIS\footnote{This other variant is similar to the EDIIS but uses the following augmented energy functional: \[E^{\text{A}}(D)=E(D^{(k)})+2\,\mathrm{tr}((D-D^{(k)})F(D^{(k)}))+\mathrm{tr}((D-D^{(k)})(F(D)-F(D^{(k)}))).\]} \cite{Hu:2010} or the LIST\footnote{This other variant is also similar to the EDIIS. It uses the so-called corrected Hohenberg--Kohn--Sham functional \cite{Zhang:2009}.} \cite{Wang:2011}.

\smallskip

Let us end this section by explaining how the CDIIS enters the theoretical framework set in the previous section for the Anderson--Pulay acceleration.

On the one hand, working with self-adjoint (real or complex) matrices of order $d$ ($d$ being the real or complex dimension of the vector space spanned by the basis functions used in the Galerkin approximation) with fixed trace, the integer $n$ is equal to $\frac{1}{2}d(d+1)-1$ (real case) or $d^2-1$ (complex case), the fixed-point function $g$ corresponds to the application of the Aufbau principle to such a matrix, and the submanifold $\Sigma$ is the set $\mathcal{P}_N$ of pure state density matrices of rank $N$, defined in \eqref{PN}. On the other hand, the error function $f$ is the commutator between a given density matrix and its associated Fock matrix, $f(D)=[F(D),D]$, so that the integer $p$ is equal to $\frac{1}{2}d(d-1)$ (real case) or $d^2-1$ (complex case).

\smallskip

Assumption \ref{assumption 1} then simply states that the Roothaan algorithm is locally convergent, which is a common assumption on the base fixed-point iteration method in the analysis of the DIIS or the Anderson acceleration (see \cite{Rohwedder:2011} or \cite{Toth:2015} for instance). Assumption \ref{assumption 2} amounts to a non-degeneracy assumption, which is equivalent to saying that the differential of the restriction of the error function to the submanifold is invertible at a solution $D_*$. In the present context, this function is already the gradient of the discretised Hartree--Fock (or Kohn--Sham) energy, and the assumption thus implies that the Hessian of this energy is non-degenerate at $D_*$.

\smallskip

Concerning regularity assumptions, the fact that the set $\mathcal{P}_N$ is a smooth submanifold  is a consequence of the constant rank theorem. For the functions $f$ and $g$ being of class $\mathscr{C}^2$, this is always true for the Hartree--Fock functional, which is smooth. In the case of the Kohn--Sham model, this issue is more delicate, since most of the exchange-correlation functionals used in practice present a lack of regularity at the origin, see \textit{e.g.} \cite{Anantharaman:2009}. However, it is reasonable to assume that the Kohn--Sham ground state has a non-vanishing density $\rho$, so that $f$ and $g$ are indeed regular in its neighbourhood.

\section{Numerical experiments}\label{numerical experiments}
In this section, we report on some numerical experiments with the intent of illustrating the performances of our proposed variants of the CDIIS in the context of the electronic ground state calculations considered in Section \ref{electronic ground state calculation}. All the computations presented were performed using tools provided by the PySCF package \cite{PYSCF}. The source code of our implementation of the CDIIS variants can be found in the following repository: \url{https://plmlab.math.cnrs.fr/mchupin/restarted-and-adaptive-cdiis/}.

\subsection{Implementation details}\label{implementation}
For each variant, the least-squares problem for the extrapolation coefficients is solved using the unconstrained form involving the differences of error vectors which are deemed the most convenient\footnote{Note that, for the fixed-depth CDIIS, the unconstrained formulation we employed uses successive differences.}, as seen in Algorithms \ref{alg:cdiis-restart-variant} and \ref{alg:cdiis-adaptive-depth-variant}. The solution is achieved via the QR factorisation of a matrix of \emph{tall and skinny} type (since the integer $p$ is in general very large compared to $m_{k}$). Such a factorisation can be efficiently updated from step to step by means of a dedicated routine from the SciPy linear algebra library (namely \texttt{scipy.linalg.qr\_update}), as the least-squares problem matrix is modified through the addition of a column (as in the variant with restarts) or the addition of a column and the possible removal of a set of columns (as in the adaptive-depth variant). A detailed cost analysis of the resulting factorisation algorithm is given in~\cite{Higham:2016}. 
An added benefit of using the QR decomposition is that it directly provides the matrix of the orthogonal projector $\Pi_{k}$ appearing in the restart condition~\eqref{restart condition, abstract problem}, so that testing for this condition at each step in Algorithm \ref{alg:cdiis-restart-variant} only entails a negligible cost.



Finally, we consider that numerical convergence is reached at iteration $k$ if the error vector norm $\norm{[F(D^{(k)}),D^{(k)}]}_2$ is below some prescribed tolerance.

\subsection{Test cases}
Some of the molecular systems used in our experiments are taken from benchmarks found in \cite{Garza:2012,Hu:2010} and are considered as representative of challenging convergence tests for self-consistent field algorithms. Results of some of the tests are presented here, like the cadmium(II)-imidazole complex (\ce{[Cd(Im)]^{2+}}) for instance, while others, like the acetaldehyde (\ce{C2H4O}), the acetic acid (\ce{C2H4O2}), or the silane (\ce{SiH4}), are available in the online repository given above. The glycine (\ce{C2H5NO2}) test case comes from an example given in the PySCF library and the geometries for galactonolactone (\ce{C6H10O6}) and dimethylnitramine (\ce{C2H6N2O2}) were found on the PubChem website (\url{https://pubchem.ncbi.nlm.nih.gov/}).

Both the restricted Hartree--Fock (RHF) model and the restricted Kohn--Sham (RKS) model are used in the experiments, the latter in conjunction with the B3LYP approximation of the exchange-correlation functional~\cite{Becke:1988,Lee:1988}.

\subsubsection{Global convergence behaviour}
Acceleration techniques like the CDIIS are locally convergent methods and may sometimes give poor results if a mediocre initial guess is used. In practice, in order to ensure and achieve convergence in a small number of steps, one usually employs a combination of a relaxed constraint algorithm (like the ODA \cite{Cances:2000}, the EDIIS \cite{Kudin:2002} or the ADIIS \cite{Hu:2010}), for its global convergence properties, and of the CDIIS, for its fast local convergence (see Subsection \ref{sec:local}). Nevertheless, our first experiment is meant to illustrate the convergence behaviour of the methods starting from the \emph{core Hamiltonian guess}, for which the orbital coefficients are simply obtained by diagonalising the matrix representing the kinetic energy and electron-nuclear potential energy parts of the Hamiltonian in the considered discrete basis.

Figure~\ref{fig:global} presents the convergence of the error vector norm for the (classical) fixed-depth, restarted and adaptive-depth variants of the CDIIS with different values of their respective parameters: the fixed maximum depth $m$, the restart parameter $\tau$ and the adaptive-depth parameter $\delta$. It is observed that the restarted and the adaptive-depth algorithms are efficient in most of the cases, but they may reach the convergence regime later than their fixed-depth counterpart (see Figures~\ref{fig:globalDimethyl} and~\ref{fig:globalGlycine}). However, when this regime is attained, one can observe that the rate of convergence of both the restarted and adaptive-depth CDIIS is generally better than that of the fixed-depth CDIIS.

\begin{figure}[ht]
	\centering
	\begin{subfigure}[b]{0.48\textwidth}
		\centering
		\includegraphics[width=\linewidth]{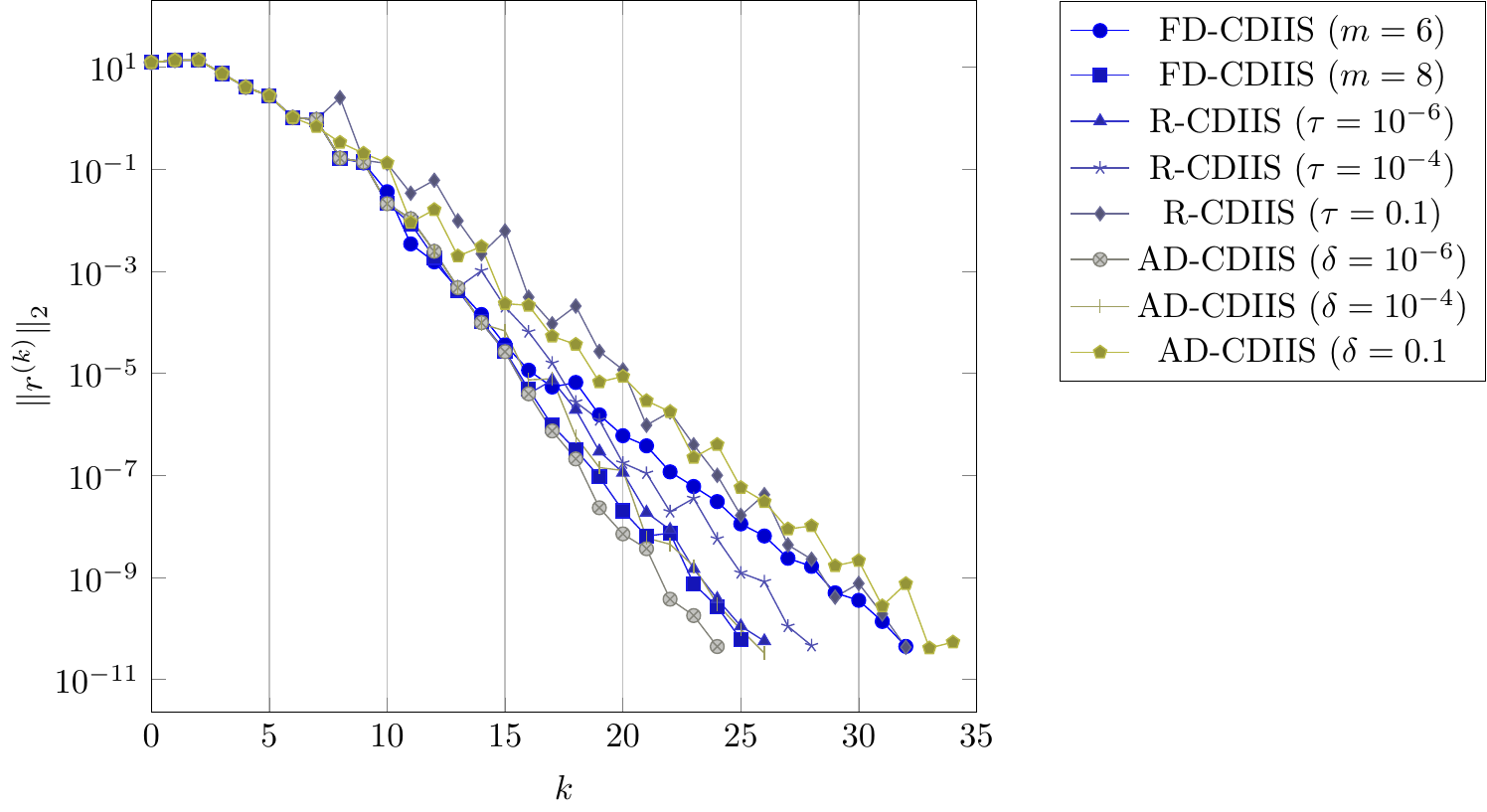}
		\caption{Cadmium-imidazole complex in the RKS/B3LYP model with basis 3-21G.}
	\end{subfigure}\quad
	\begin{subfigure}[b]{0.48\textwidth}
		\centering
		\includegraphics[width=\linewidth]{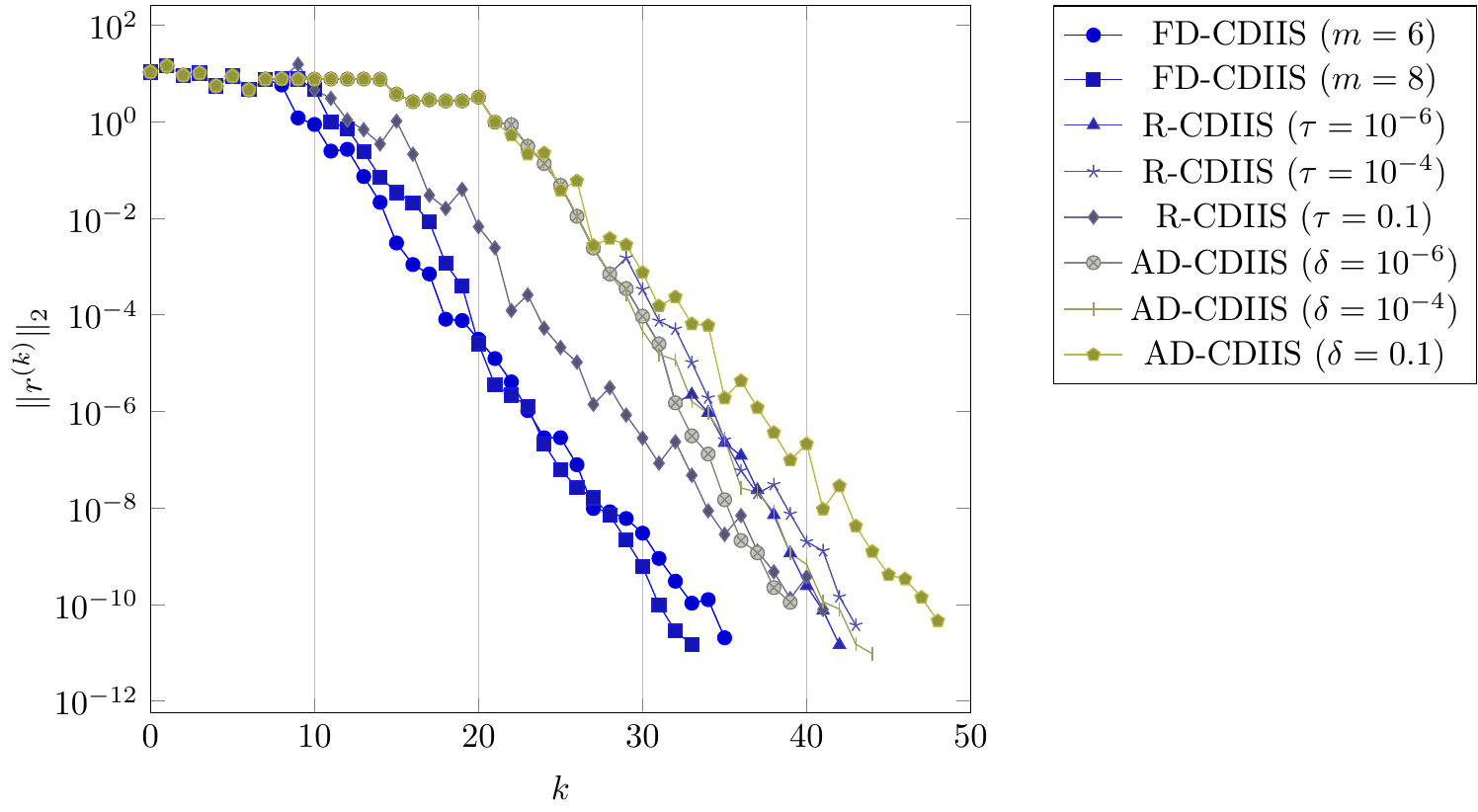}
		\caption{Glycine molecule in the RKS/B3LYP model with basis 6-31Gs.}\label{fig:globalGlycine}
	\end{subfigure}
	\begin{subfigure}[b]{0.48\textwidth}
		\centering
		\includegraphics[width=\linewidth]{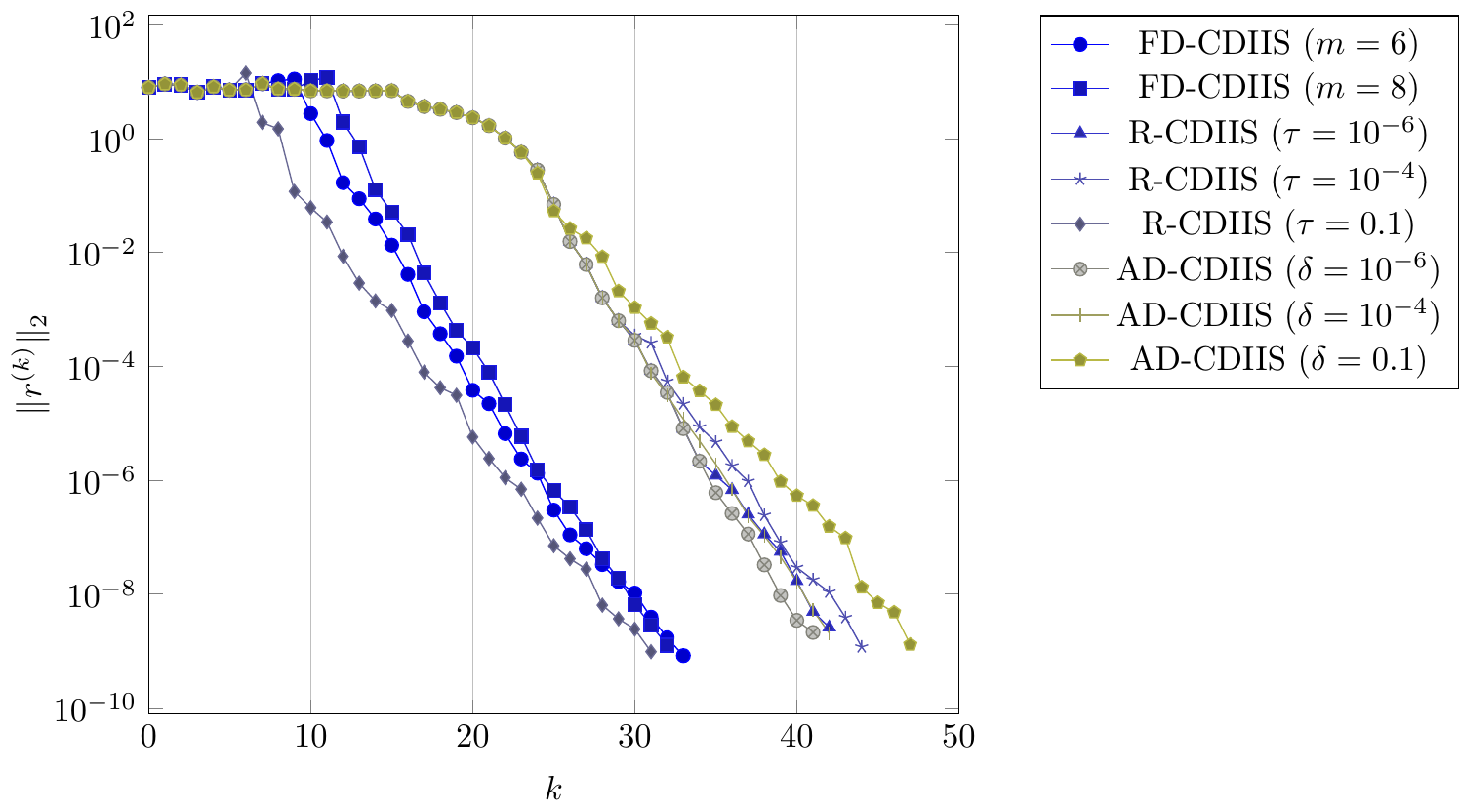} 
		\caption{Dimethylnitramine molecule in the RHF model with basis 6-31G.}\label{fig:globalDimethyl}
	\end{subfigure}\quad
	\begin{subfigure}[b]{0.48\textwidth}
		\centering
		\includegraphics[width=\linewidth]{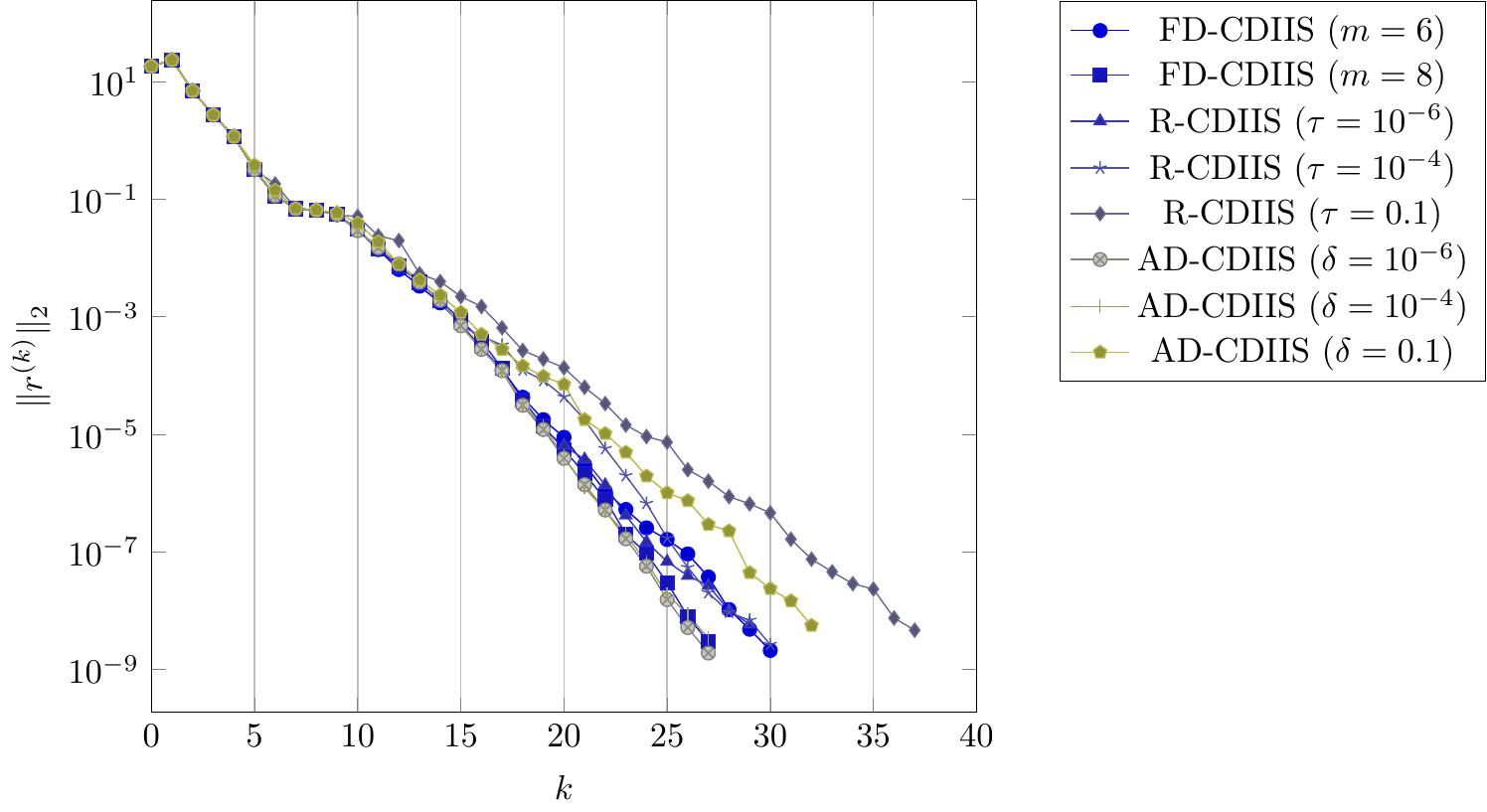}
		\caption{Galactonolactone molecule in the RHF model with basis 6-31G.}
	\end{subfigure}	
	\caption{Residual norm convergence for the fixed-depth, restarted and adaptive-depth CDIIS on different molecular systems using an initial guess obtained by diagonalising the core Hamiltonian matrix.}\label{fig:global}
\end{figure}

An effect of the use of a poor initial guess is the accumulation of many stored iterates in the early stages of the computation by the restarted and adaptive-depth variants, visible in Figure~\ref{fig:accumulation}. The number of stored iterates clearly decreases as soon as a convergence regime is reached. This behaviour can be observed in all of our numerical experiments.

\begin{figure}[ht]
	\centering
	\begin{subfigure}[b]{0.48\textwidth}
		\centering
		\includegraphics[width=\linewidth,page=1]{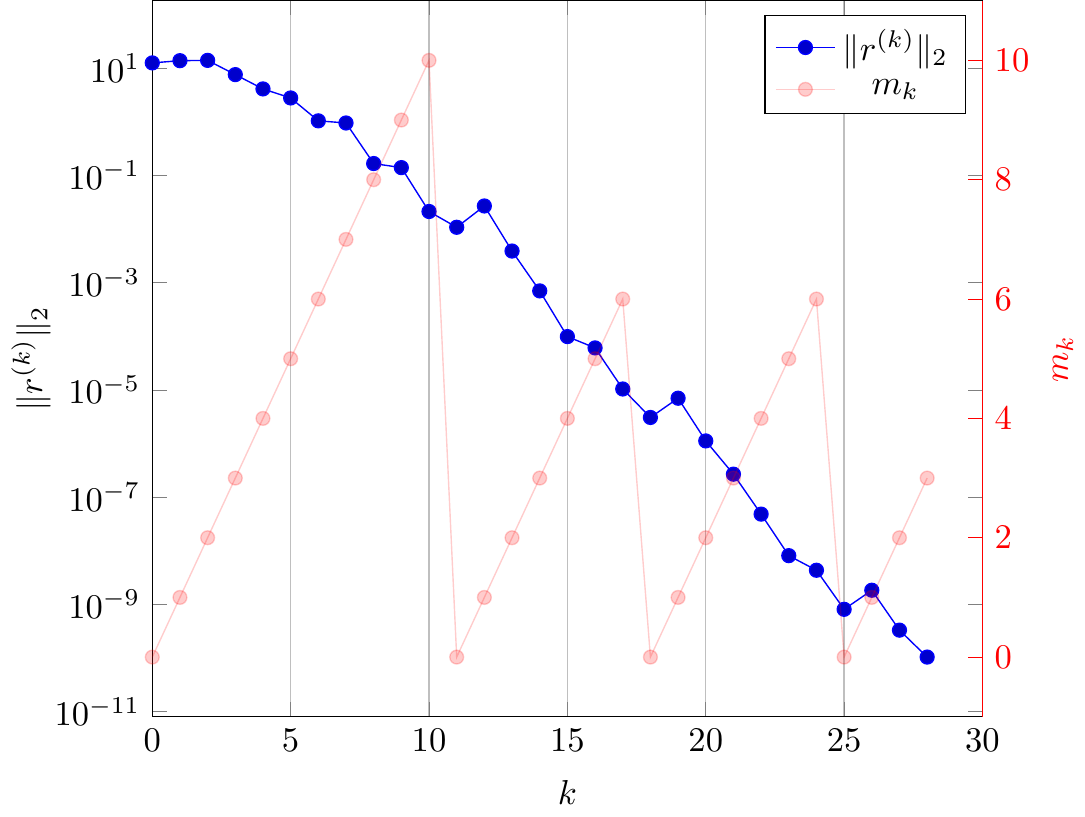}
		\caption{Restarted CDIIS with $\tau=10^{-4}$.}
	\end{subfigure}\quad
	\begin{subfigure}[b]{0.48\textwidth}
		\centering
		\includegraphics[width=\linewidth,page=2]{GlobalCdRestSlid.pdf}
		\caption{Adaptive-depth CDIIS with $\delta=10^{-4}$.}
	\end{subfigure}
	\caption{Residual norm convergence and corresponding depth value for the restarted and adaptive-depth CDIIS on the cadmium-imidazole complex in the RKS/B3LYP model with basis 3-21G, using an initial guess obtained by diagonalising the core Hamiltonian matrix.}\label{fig:accumulation}
\end{figure}

\subsubsection{Local convergence behaviour} \label{sec:local}
In order to properly assess their local convergence properties, the CDIIS variants were combined with a globally convergent method and an improved initial guess, both provided by the PySCF package. More precisely, the EDIIS with a fixed-depth equal to 8 and an initial guess generated from a superposition of atomic density matrices were used for the experiments with the RHF model, while the ADIIS with a fixed-depth equal to 8 and the same type of initial guess were used for the experiments with the RKS model. In both cases, the switch between the ``global'' method and the CDIIS variants was made when the error vector norm was below $10^{-2}$. With such an initialisation, convergence was immediately observed in every test case. Figure~\ref{fig:setGeneral} presents the obtained results for the set of molecules already considered in Figure~\ref{fig:global} and a smaller set of parameter values. In such a setting, the restarted and the adaptive-depth CDIIS are shown to be more efficient than the fixed-depth one when adequately chosen values of their respective parameters are used.

\begin{figure}[htb]
	\centering
	\begin{subfigure}[b]{0.48\textwidth}
		\centering
		\includegraphics[width=\linewidth]{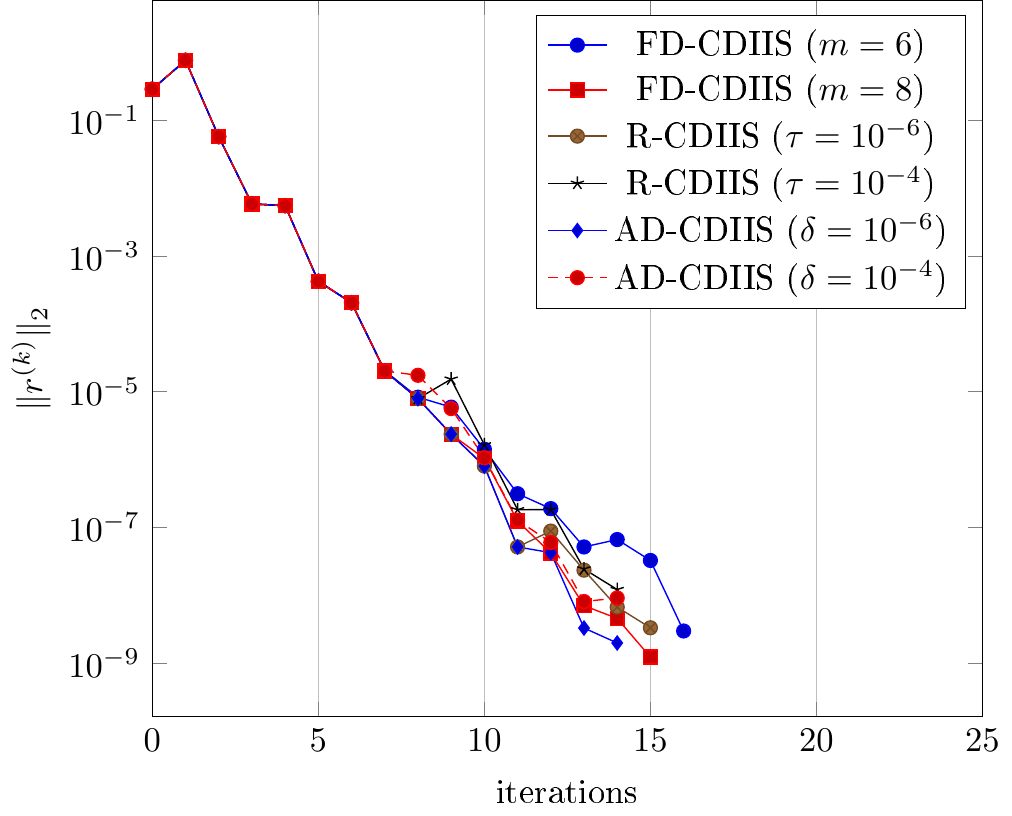}
		\caption{Cadmium-imidazole complex in the RKS/B3LYP model with basis 3-21G.}
	\end{subfigure}%
	\quad
	\begin{subfigure}[b]{0.48\textwidth}
		\centering
		\includegraphics[width=\linewidth]{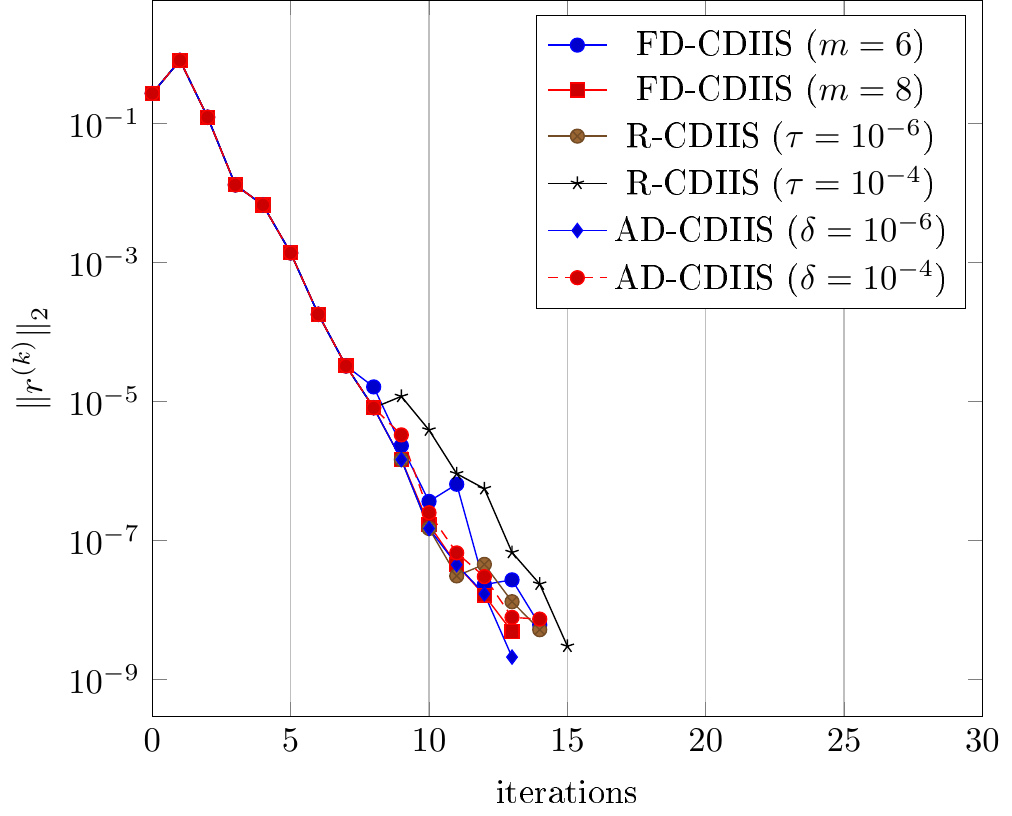}
		\caption{Glycine molecule in the RKS/B3LYP model with basis 6-31Gs.}
	\end{subfigure}
	\begin{subfigure}[b]{0.48\textwidth}
		\centering
		\includegraphics[width=\linewidth]{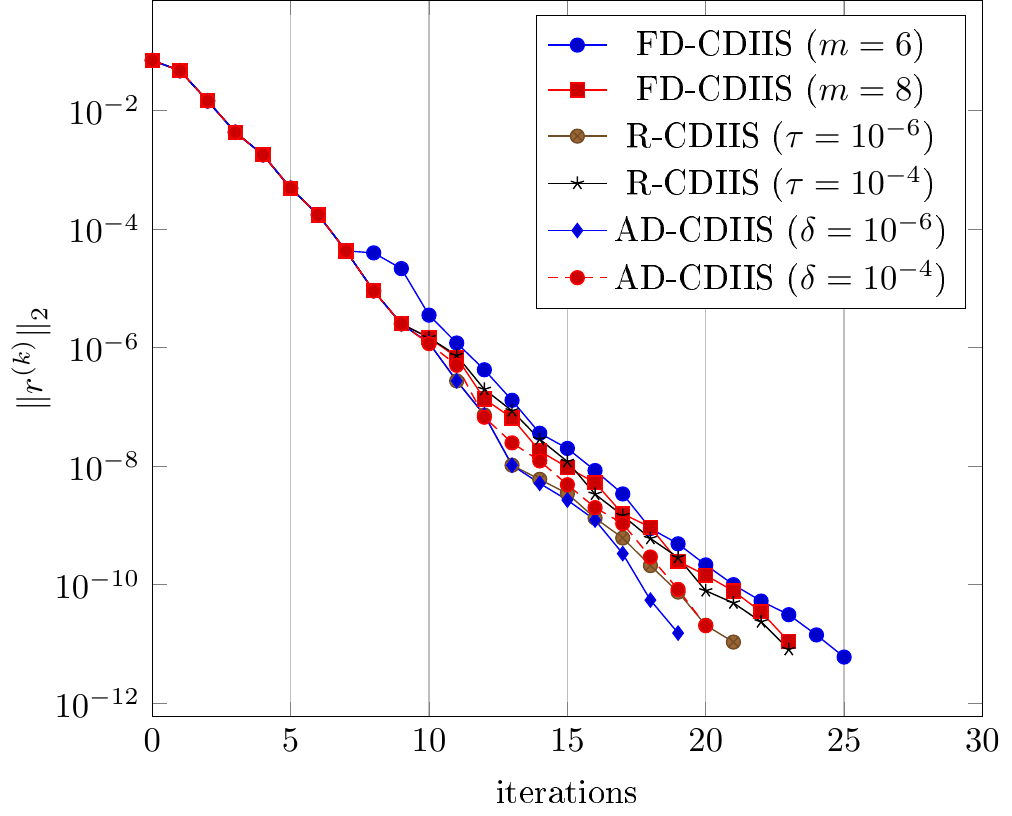}
		\caption{Dimethylnitramine molecule in the RHF model with basis 6-31G.}
	\end{subfigure}	
	\quad
	\begin{subfigure}[b]{0.48\textwidth}
		\centering
		\includegraphics[width=\linewidth]{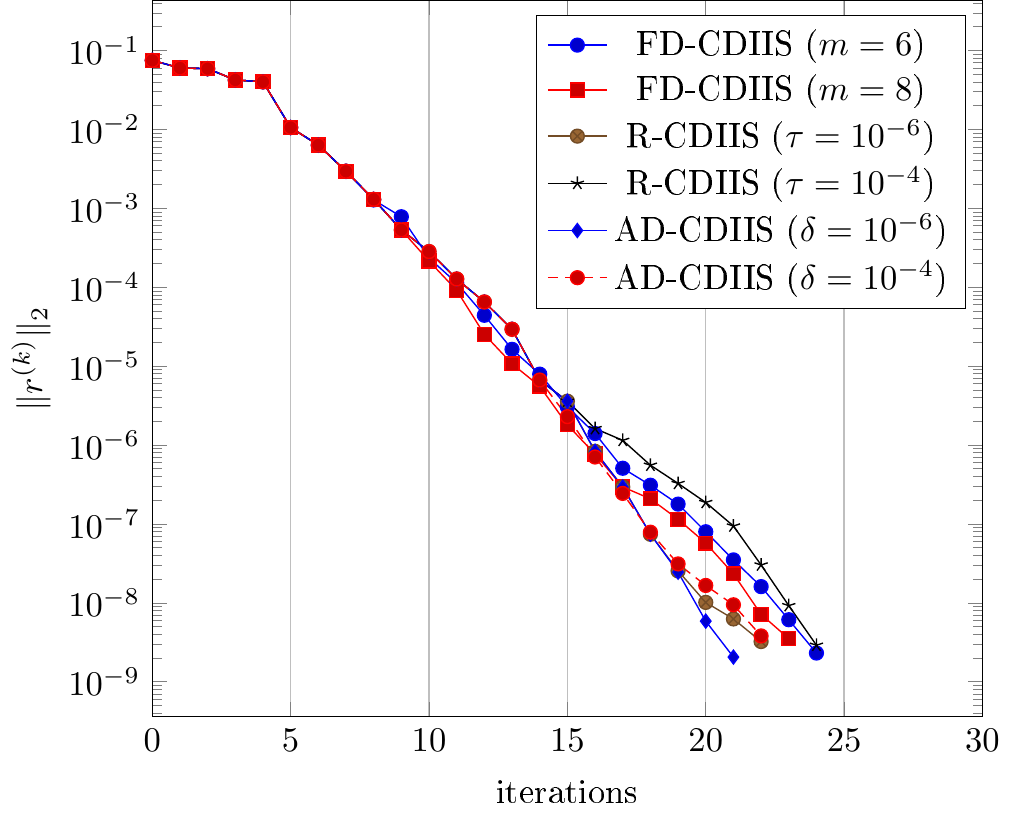}
		\caption{Galactonolactone molecule in the RHF model with basis 6-31G.}
	\end{subfigure}	
	\caption{Residual norm convergence for the fixed-depth, restarted and adaptive-depth CDIIS on different molecular systems using an initial guess provided by a globally convergent method.}\label{fig:setGeneral}
\end{figure}

Plotting the error vector norm with the corresponding depth during the course of the numerical experiments, as done in Figure~\ref{fig:convMk} for the dimethylnitramine and the glycine molecules, allows to observe that a restart occurs after a significant decrease of the error vector norm, as predicted by the theory for the restarted Anderson--Pulay acceleration. Unfortunately, one can also notice a slowdown in the convergence (or even a moderate increase of the error vector norm) just after a restart, thus motivating the introduction of an adaptive-depth mechanism which would not suffer from such a defect.

\begin{figure}[htb]
	\centering
	\begin{subfigure}[b]{0.48\textwidth}
		\centering
		\includegraphics[width=\linewidth,page=1]{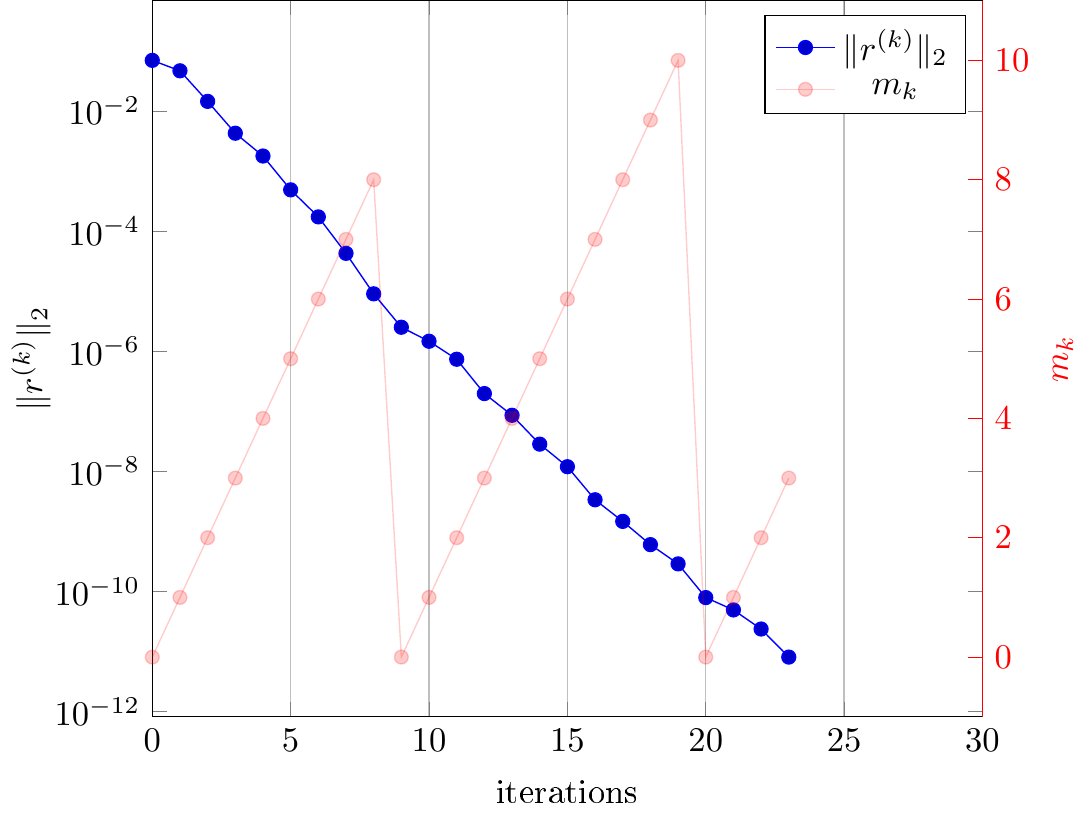}
		\caption{Restarted CDIIS with $\tau=10^{-4}$ for the dimethylnitramine molecule in the RHF model with basis 6-31G.}
	\end{subfigure}%
	\quad
	\begin{subfigure}[b]{0.48\textwidth}
		\centering
		\includegraphics[width=\linewidth,page=2]{DimethylRestartSliding.pdf}
		\caption{Adaptive-depth CDIIS with $\delta=10^{-4}$ for the dimethylnitramine molecule in the RHF model with basis 6-31G.}
	\end{subfigure}
	\begin{subfigure}[b]{0.48\textwidth}
	\centering
	\includegraphics[width=\linewidth,page=1]{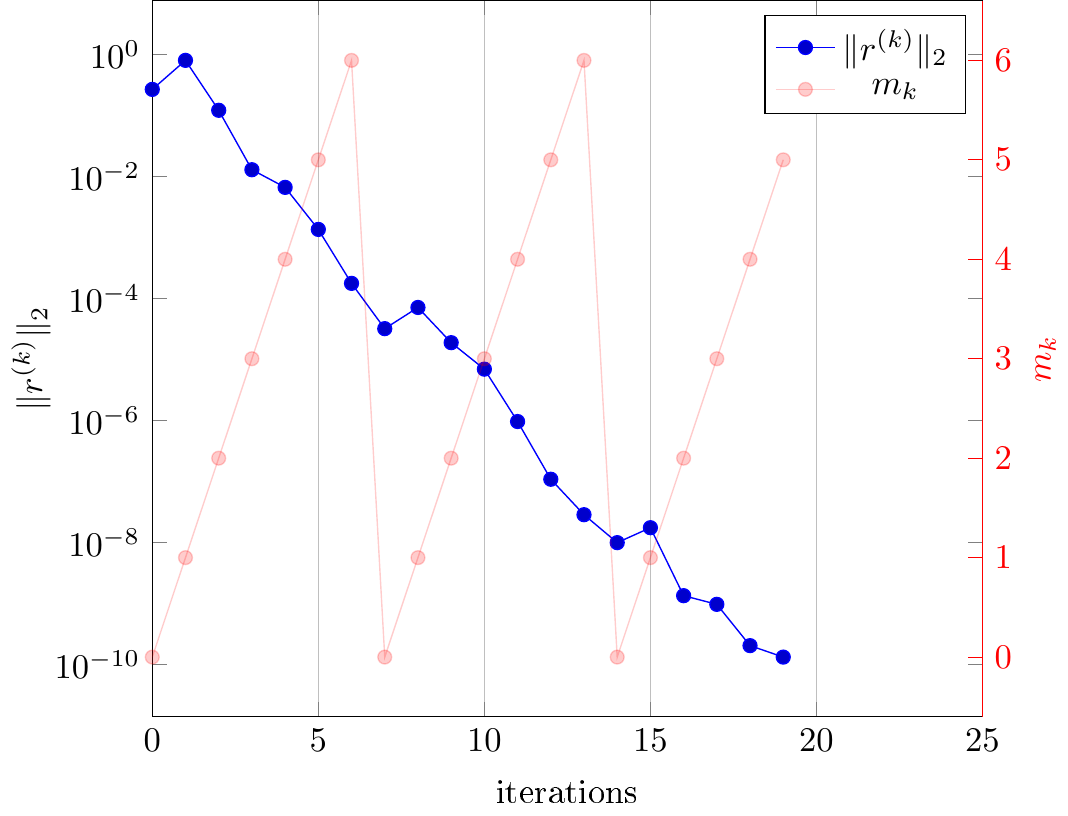}
	\caption{Restarted CDIIS with $\tau=10^{-4}$ for the glycine molecule in the RKS/B3LYP model with basis 6-31Gs.}
    \end{subfigure}%
    \quad
    \begin{subfigure}[b]{0.48\textwidth}
	\centering
	\includegraphics[width=\linewidth,page=2]{glycineRestartSliding.pdf}
	\caption{Adaptive-depth CDIIS with $\delta=10^{-4}$ for the glycine molecule in the RKS/B3LYP model with basis 6-31Gs.}
\end{subfigure}
	\caption{Residual norm convergence and corresponding depth for the restarted and adaptive-depth CDIIS on the dimethylnitramine and glycine molecules.}\label{fig:convMk}
\end{figure} 

\textbf{Mean depth.}
The cost of the CDIIS in terms of storage and computational resource at a given iteration is proportional to the value of the depth at this iteration. As a consequence, to properly compare the restarted and adaptive-depth variants with the classical fixed-depth CDIIS, we have computed the mean depth, denoted by $\bar{m}$, as the average value of $m_k$ during an experiment. Figure~\ref{fig:mk} presents the evolution of $\bar{m}$ with respect to the values of the restart parameter $\tau$ and the adaptive-depth $\delta$ for each of the molecular systems we considered. It is seen that $\bar{m}$ is a decreasing function of these parameters and that the two variants have on average lesser costs than their fixed-depth counterpart, while their performances are comparable or better.

\begin{figure}[htb]
	\centering
    \begin{subfigure}[t]{0.48\textwidth}
		\centering
		\includegraphics[width=\linewidth,page=2]{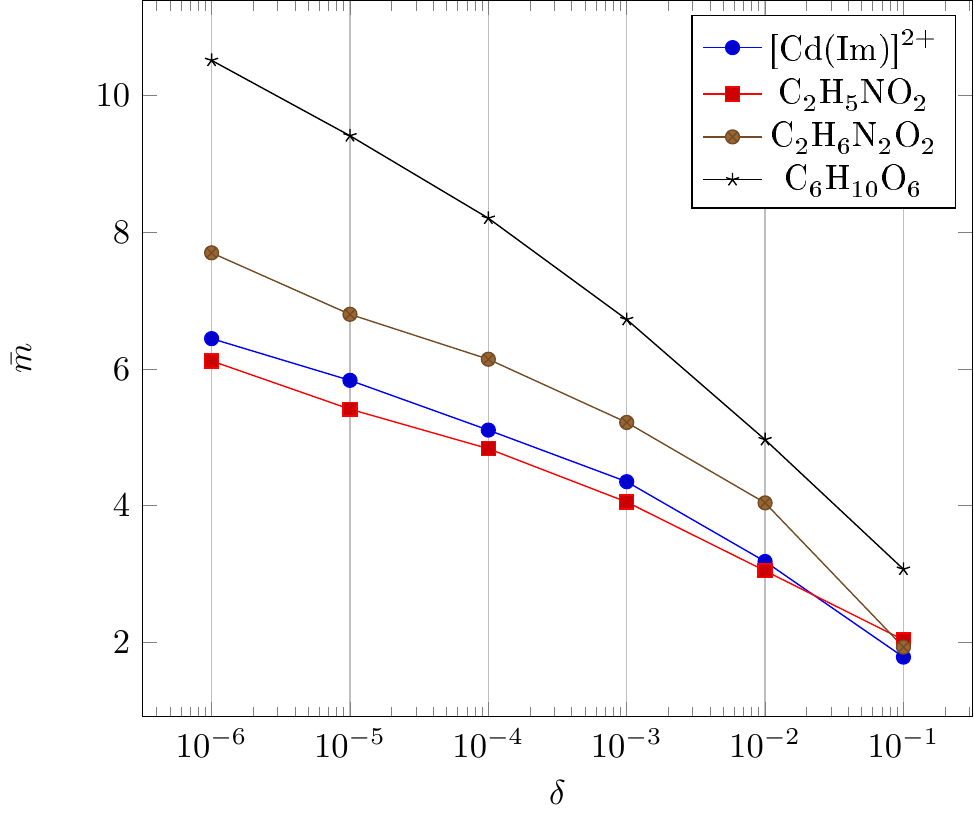}
		\caption{Depth mean $\bar{m}$ as a function of the restart parameter~$\tau$.}
	\end{subfigure}%
	\quad
	\begin{subfigure}[t]{0.48\textwidth}
		\centering
		\includegraphics[width=\linewidth,page=1]{mkmean.pdf}
		\caption{Depth mean $\bar{m}$ as a function of the adaptive-depth parameter~$\delta$.}
	\end{subfigure}
	\caption{Evolution of the depth mean for different molecular systems and models.}\label{fig:mk}
\end{figure}

\textbf{Rate of convergence.}
For each of the molecular systems considered in Figure~\ref{fig:setGeneral}, the practical rate of convergence of the restarted and adaptive-depth CDIIS was computed using a linear regression and plotted against the values of the parameters $\tau$ and $\delta$ in Figure~\ref{fig:slope}. As expected, it is apparent that this rate increases as the value of the parameter decreases, its evolution for the adaptive-depth variant being noticeably smoother. 

\begin{figure}[htb]
	\centering
	\begin{subfigure}[b]{0.48\textwidth}
    		\centering
    		\includegraphics[width=\linewidth,page=2]{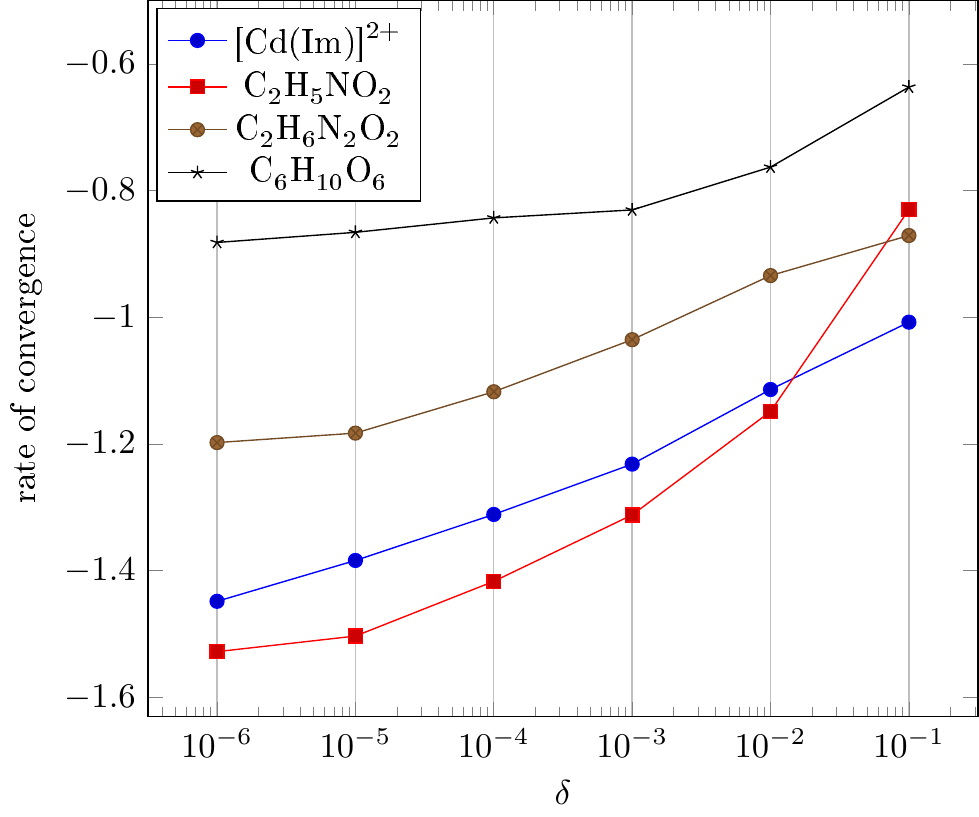}
    		\caption{Rate of convergence as a function of the restart parameter~$\tau$.}
    \end{subfigure} %
    \quad
	\begin{subfigure}[b]{0.48\textwidth}
		\centering
		\includegraphics[width=\linewidth,page=1]{slope.pdf}
		\caption{Rate of convergence as a function of the adaptive-depth parameter~$\delta$.}
	\end{subfigure}
	\caption{Evolution of the rate of convergence for different molecular systems.}\label{fig:slope}
\end{figure}

\textbf{Selecting values for the parameters.}
As previously mentioned, the decrease of the error vector norm becomes faster, and the average depth $\bar{m}$ increases, as the parameters $\tau$ and $\delta$ are decreased. However, a closer scrutiny of both Figures~\ref{fig:mk} and~\ref{fig:slope} reveals that the convergence rate appears to tend to an asymptotic value while the average depth grows at an almost constant rate. Thus, the gain of convergence by decreasing the parameters may not outweight the added computational cost of keeping a larger history of iterates. In this regard, a satisfying compromise was reached in our numerical tests by setting the values of both parameters at~$10^{-4}$.

\section{Conclusion}
Motivated by the DIIS and CDIIS techniques, respectively introduced by Pulay in 1980 \cite{Pulay:1980} and 1982 \cite{Pulay:1982}, and their relation with other extrapolation processes designed to accelerate fixed-point iteration methods -- one of them being the well-known Anderson acceleration -- we have considered a general and abstract class of acceleration methods and studied, theoretically and numerically, the local convergence properties of two of its instances: one allowing restarts, based on a condition initially introduced for a quasi-Newton using multiple secant equations \cite{Gay:1977,Rohwedder:2011}, and another one whose depth is continuously adapted according to a criterion that appears to be new.

Our main convergence results are obtained in a more general setting and rely on weaker assumptions than those existing in the literature for the DIIS \cite{Rohwedder:2011} or the Anderson acceleration \cite{Toth:2015}. First, we do not impose a direct relation between the function $g$ of the fixed-point iteration used to compute the solution and the error function $f$ used for the extrapolation. Second, the nondegeneracy hypothesis on the solution of the problem is weakened: it only involves the restriction of the function $f$ to a submanifold $\Sigma$ containing the range of the function $g$. Such generalisations are necessary in order to deal with the self-consistent field iterations in quantum chemistry, for which the DIIS and the CDIIS were originally introduced. As we already stated before, these results also cover the particular cases of the Anderson acceleration (for which $\Sigma=\mathbb{R}^n$ and $g=\id+f$) and of the DIIS if $\Sigma=\mathbb{R}^n$.

Another novelty of our work is the absence of assumption concerning the uniform boundedness of the extrapolation coefficients. Indeed, the proposed restart and adaptive-depth mechanisms allow us to \textit{a priori} prove such a bound. To our knowledge, the only other work where a similar estimate can be found is \cite{Zhang:2020}, in which a global linear convergence analysis for a stabilized variant of the type-I Anderson acceleration is given. As far as we know, the present article thus provides the first \emph{complete} proof of accelerated convergence for a family of extrapolation algorithms which includes instances of the DIIS, the CDIIS or the Anderson acceleration.

Finally, numerical experiments illustrate the good performances of the restarted and adaptive-depth acceleration algorithms applied to the numerical computation of the electronic ground state of various molecular systems. It has been observed that, with an adequate choice of their respective parameters, both variants exhibit a better convergence rate than their fixed-depth counterpart. In particular, the adaptive-depth variant shows good promise. It has also been noticed that acceleration occurs for values of the parameters several orders of magnitude larger than the theoretical estimates, and that the size of the set of stored iterates at each step is on average smaller than the fixed ``rule of thumb'' values generally found in implementations of the CDIIS. Understanding the reasons of this key fact will require further effort.

\section*{Acknowledgement}
The authors wish to warmly thank Antoine Levitt, for providing them with encouragements, useful references, and insightful remarks on a first draft of the manuscript, and Qiming Sun, for his help with the PySCF package. They also thank the anonymous reviewers whose comments and suggestions helped improve and clarify this manuscript.\\This project has received funding from the European Research Council (ERC) under the European Union's Horizon 2020 research and innovation programme (grant agreement MDFT No 725528 of Mathieu Lewin).

\printbibliography[heading=bibintoc]
\end{document}